\newcommand{\sysn}{\left\{\begin{array}{rcl}}
\newcommand{\sysk}{\end{array}\right.}
\newcommand{\ingrw}[2]{\includegraphics[width=#1mm]{#2}}
\newtheorem{theorem}{Theorem}[section]
\theoremstyle{example}
\newtheorem{proposition}[theorem]{Proposition}
\theoremstyle{definition}
\newtheorem{definition}[theorem]{Definition}
\newtheorem{corollary}[theorem]{Corollary}
\journal{Topology and its Applications}
\begin{document}

\begin{frontmatter}



\title{Classification of selectors for sequences of dense sets of $C_p(X)$ \tnoteref{label1}}


\author{Alexander V. Osipov}

\ead{OAB@list.ru}


\address{Krasovskii Institute of Mathematics and Mechanics, Ural Federal
 University,

 Ural State University of Economics, Yekaterinburg, Russia}

\begin{abstract} For a Tychonoff space $X$, we denote by $C_p(X)$
the space of all real-valued continuous functions on $X$ with the
topology of pointwise convergence. In this paper we investigate
different selectors for sequences of dense sets of $C_p(X)$. We
give the characteristics of selection principles
$S_{1}(\mathcal{P},\mathcal{Q})$,
$S_{fin}(\mathcal{P},\mathcal{Q})$ and
$U_{fin}(\mathcal{P},\mathcal{Q})$ for $\mathcal{P},\mathcal{Q}\in
\{\mathcal{D}$, $\mathcal{S}$, $\mathcal{A} \}$, where

$\bullet$ $\mathcal{D}$ --- the family of a  dense subsets  of
$C_p(X)$;

$\bullet$ $\mathcal{S}$ --- the family of a sequentially dense
subsets  of $C_p(X)$;

$\bullet$ $\mathcal{A}$ --- the family of a  $1$-dense subsets of
$C_p(X)$, through the selection principles of a space $X$.


\end{abstract}

\begin{keyword}

$S_1(\mathcal{S},\mathcal{S})$ \sep
$U_{fin}(\mathcal{S},\mathcal{S})$ \sep
$S_1(\mathcal{D},\mathcal{S})$ \sep $S_1(\mathcal{S},\mathcal{D})$
\sep $S_{fin}(\mathcal{S},\mathcal{D})$ \sep
$S_1(\mathcal{D},\mathcal{D})$ \sep
$S_{fin}(\mathcal{D},\mathcal{D})$ \sep
$S_{1}(\mathcal{A},\mathcal{A})$ \sep
$U_{fin}(\mathcal{S},\mathcal{D})$ \sep
$S_{1}(\mathcal{S},\mathcal{A})$ \sep
$S_{fin}(\mathcal{A},\mathcal{A})$ \sep function spaces \sep
selection principles  \sep $C_p$ theory \sep Scheepers Diagram


\MSC 37F20 \sep 26A03 \sep 03E75  \sep 54C35

\end{keyword}

\end{frontmatter}



\section{Introduction}

Throughout this paper, all spaces are assumed to be Tychonoff. The
set of positive integers is denoted by $\omega$. Let $\mathbb{R}$
be the real line, we put $\mathbb{I}=[0,1]\subset \mathbb{R}$, and
$\mathbb{Q}$ be the rational numbers. For a space $X$, we denote
by $C_p(X)$ the space of all real-valued continuous functions on
$X$ with the topology of pointwise convergence. The symbol
$\bf{0}$ stands for the constant function to $0$.

Basic open sets of $C_p(X)$ are of the form

$[x_1,...,x_k, U_1,...,U_k]=\{f\in C(X): f(x_i)\in U_i$,
$i=1,...,k\}$, where each $x_i\in X$ and each $U_i$ is a non-empty
open subset of $\mathbb{R}$. Sometimes we will write the basic
neighborhood of the point $f$ as $<f,A,\epsilon>$ where
$<f,A,\epsilon>:=\{g\in C(X): |f(x)-g(x)|<\epsilon$ $\forall x\in
A\}$, $A$ is a finite subset of $X$ and $\epsilon>0$.

In this paper, by cover we mean a nontrivial one, that is,
$\mathcal{U}$ is a cover of $X$ if $X=\cup \mathcal{U}$ and
$X\notin \mathcal{U}$.

 An open cover $\mathcal{U}$ of a space $X$ is:

 $\bullet$ an {\it $\omega$-cover} if $X$ does not belong to
 $\mathcal{U}$ and every finite subset of $X$ is contained in a
 member of $\mathcal{U}$.

$\bullet$ a {\it $\gamma$-cover} if it is infinite and each $x\in
X$ belongs to all but finitely many elements of $\mathcal{U}$.

For a topological space $X$ we denote:

$\bullet$ $\mathcal{O}$ --- the family of open covers of $X$;


$\bullet$ $\Gamma$ --- the family of open $\gamma$-covers of $X$;



$\bullet$ $\Gamma_{cl}$ --- the family of clopen $\gamma$-covers
of $X$;

$\bullet$ $\Omega$ --- the family of open $\omega$-covers of $X$;



$\bullet$ $\mathcal{D}$ --- the family of a  dense subsets of $X$;

$\bullet$ $\mathcal{S}$ --- the family of a sequentially dense
subsets of $X$.

\bigskip

Many topological properties are defined or characterized in terms
 of the following classical selection principles.
 Let $\mathcal{A}$ and $\mathcal{B}$ be sets consisting of
families of subsets of an infinite set $X$. Then:

$S_{1}(\mathcal{A},\mathcal{B})$ is the selection hypothesis: for
each sequence $\{A_{n}: n\in \omega\}$ of elements of
$\mathcal{A}$ there is a sequence $\{b_{n}\}_{n\in \omega}$ such
that for each $n$, $b_{n}\in A_{n}$, and $\{b_{n}: n\in\omega \}$
is an element of $\mathcal{B}$.

$S_{fin}(\mathcal{A},\mathcal{B})$ is the selection hypothesis:
for each sequence $\{A_{n}: n\in \omega\}$ of elements of
$\mathcal{A}$ there is a sequence $\{B_{n}\}_{n\in \omega}$ of
finite sets such that for each $n$, $B_{n}\subseteq A_{n}$, and
$\bigcup_{n\in\omega}B_{n}\in\mathcal{B}$.

$U_{fin}(\mathcal{A},\mathcal{B})$ is the selection hypothesis:
whenever $\mathcal{U}_1$, $\mathcal{U}_2, ... \in \mathcal{A}$ and
none contains a finite subcover, there are finite sets
$\mathcal{F}_n\subseteq \mathcal{U}_n$, $n\in \omega$, such that
$\{\bigcup \mathcal{F}_n : n\in \omega\}\in \mathcal{B}$.

\medskip
The following prototype of many classical properties is called
"$\mathcal{A}$ choose $\mathcal{B}$" in \cite{tss}.

${\mathcal{A}\choose\mathcal{B}}$ : For each $\mathcal{U}\in
\mathcal{A}$ there exists $\mathcal{V}\subseteq \mathcal{U}$ such
that $\mathcal{V}\in \mathcal{B}$.

Then $S_{fin}(\mathcal{A},\mathcal{B})$ implies
${\mathcal{A}\choose\mathcal{B}}$.

Many equivalence hold among these properties, and the surviving
ones appear in the following Diagram (where an arrow denote
implication), to which no arrow can be added except perhaps from
$U_{fin}(\Gamma, \Gamma)$ or $U_{fin}(\Gamma, \Omega)$ to
$S_{fin}(\Gamma, \Omega)$ \cite{jmss}.

\bigskip

\begin{center}
\ingrw{90}{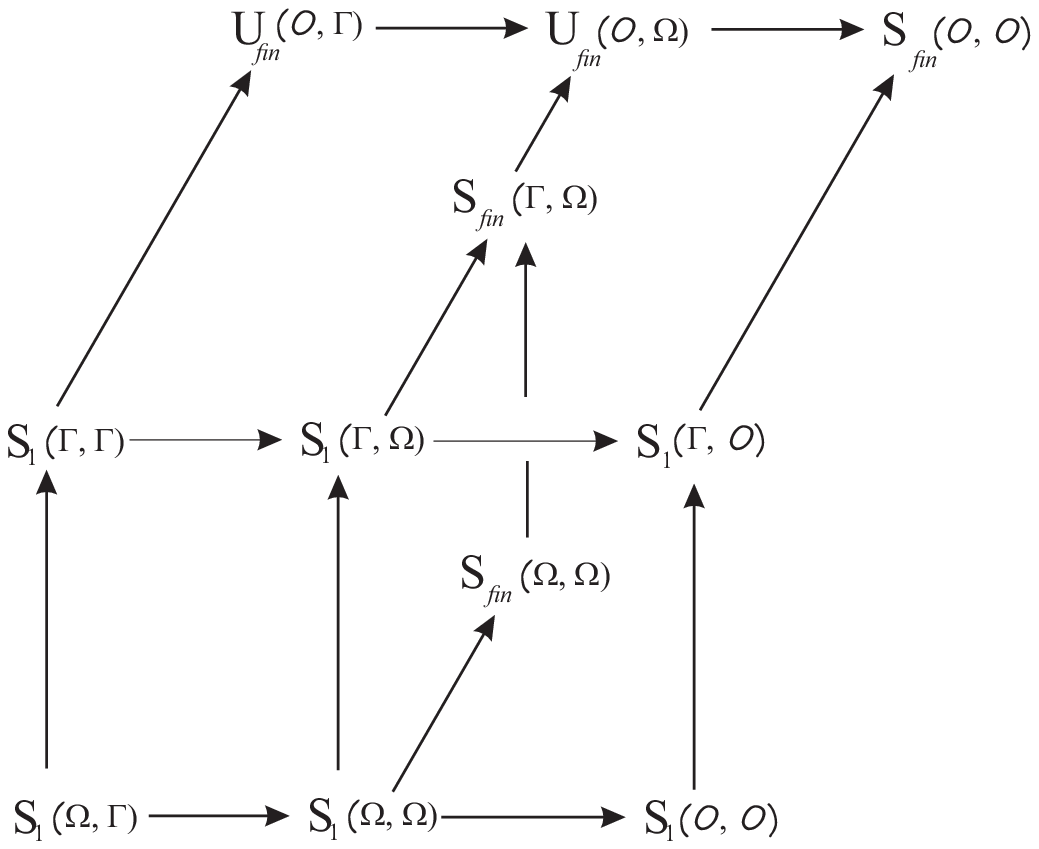}

\medskip

Fig.~1. The Scheepers Diagram.

\end{center}
\bigskip

The papers \cite{jmss,ko,sch3,sch1,bts} have initiated the
simultaneous
 consideration of these properties in the case where $\mathcal{A}$ and
 $\mathcal{B}$ are important families of open covers of a
 topological space $X$.

\section{Main definitions and notation}

Let $X$ be a topological space, and $x\in X$. A subset $A$ of $X$
{\it converges} to $x$, $x=\lim A$, if $A$ is infinite, $x\notin
A$, and for each neighborhood $U$ of $x$, $A\setminus U$ is
finite. Consider the following collection:

$\bullet$ $\Omega_x=\{A\subseteq X : x\in \overline{A}\setminus
A\}$;

$\bullet$ $\Gamma_x=\{A\subseteq X : x=\lim A\}$.

\bigskip

We write $\Pi (\mathcal{A}_x, \mathcal{B}_x)$ without specifying
$x$, we mean $(\forall x) \Pi (\mathcal{A}_x, \mathcal{B}_x)$.

$\bullet$ A space $X$ has {\it countable fan tightness}
(Arhangel'skii's countable fan tightness), if $X$ $\models$
$S_{fin}(\Omega_x,\Omega_x)$ \cite{arh}.

\medskip

$\bullet$ A space $X$ has {\it countable strong fan tightness}
(Sakai's countable strong fan tightness), if $X$ $\models$
$S_{1}(\Omega_x,\Omega_x)$ \cite{sak}.

\medskip

$\bullet$ A space $X$ has {\it countable selectively sequentially
fan tightness} (Arhangel'skii's property $\alpha_4$), if $X$
$\models$ $S_{fin}(\Gamma_x,\Gamma_x)$ \cite{arh0}.

$\bullet$  A space $X$ has {\it countable strong selectively
sequentially fan tightness} (Arhangel'skii's property $\alpha_2$),
if $X$ $\models$ $S_{1}(\Gamma_x,\Gamma_x)$ \cite{arh0}.

\medskip

$\bullet$ A space $X$ has {\it strictly Fr$\acute{e}$chet-Urysohn
at $x$,} if $X$ $\models$ $S_{1}(\Omega_x,\Gamma_x)$ \cite{sash}.

\medskip

$\bullet$ A space $X$ has {\it almost strictly
Fr$\acute{e}$chet-Urysohn at $x$},  if $X$ $\models$
$S_{fin}(\Omega_x,\Gamma_x)$.

\medskip
$\bullet$ A space $X$ has {\it the weak sequence selection
property}, if $X$ $\models$ $S_{1}(\Gamma_x,\Omega_x)$
\cite{sch4}.

\medskip

$\bullet$  A space $X$ has {\it the sequence selection property},
if $X$ $\models$ $S_{fin}(\Gamma_x,\Omega_x)$.

\medskip

\medskip
The following implications hold

\medskip
\begin{center}
$S_1(\Gamma_x,\Gamma_x) \Rightarrow S_{fin}(\Gamma_x,\Gamma_x)
\Rightarrow S_1(\Gamma_x,\Omega_x) \Rightarrow
S_{fin}(\Gamma_x,\Omega_x)$ \\  $\Uparrow$ \, \,\, \, \, \, \,
\,\, \, \, \, \, \, $ \Uparrow $ \,\, \, \, \,\, \, \, \, \, \,
$\Uparrow $ \,\, \, \, \,\,\,\, \, \, \, \,\, \, \, $\Uparrow$ \\
$S_1(\Omega_x,\Gamma_x) \Rightarrow S_{fin}(\Omega_x,\Gamma_x)
\Rightarrow S_1(\Omega_x,\Omega_x) \Rightarrow
S_{fin}(\Omega_x,\Omega_x)$

\end{center}

\medskip

We write $\Pi (\mathcal{A}, \mathcal{B}_x)$ without specifying
$x$, we mean $(\forall x) \Pi (\mathcal{A}, \mathcal{B}_x)$.
\medskip

\medskip

$\bullet$  A space $X$ has {\it countable fan tightness with
respect to dense subspaces,} if $X$ $\models$
$S_{fin}(\mathcal{D},\Omega_x)$ (\cite{bbm1}).

$\bullet$ A space $X$ has {\it countable strong fan tightness with
respect to dense subspaces,} if $X$ $\models$
$S_{1}(\mathcal{D},\Omega_x)$ (\cite{bbm1}).

$\bullet$ A space $X$ has {\it almost strictly
Fr$\acute{e}$chet-Urysohn at $x$ with respect to dense subspaces},
if $X$ $\models$ $S_{fin}(\mathcal{D},\Gamma_x)$.

$\bullet$ A space $X$ has {\it strictly Fr$\acute{e}$chet-Urysohn
at $x$ with respect to dense subspaces}, if $X$ $\models$
$S_{1}(\mathcal{D},\Gamma_x)$.

$\bullet$ A space $X$ has {\it countable selectively sequentially
fan tightness with respect to dense subspaces}, if $X$ $\models$
$S_{fin}(\mathcal{S},\Gamma_x)$.

$\bullet$ A space $X$ has {\it countable strong selectively
sequentially fan tightness with respect to dense subspaces}, if
$X$ $\models$ $S_{1}(\mathcal{S},\Gamma_x)$.

$\bullet$ A space $X$ has {\it the sequence selection property
with respect to dense subspaces}, if $X$ $\models$
$S_{fin}(\mathcal{S},\Omega_x)$.

$\bullet$ A space $X$ has {\it the weak sequence selection
property with respect to dense subspaces}, if $X$ $\models$
$S_{1}(\mathcal{S},\Omega_x)$.


\medskip

\medskip
The following implications hold

\medskip
\begin{center}
$S_1(\mathcal{S},\Gamma_x) \Rightarrow
S_{fin}(\mathcal{S},\Gamma_x) \Rightarrow
S_1(\mathcal{S},\Omega_x) \Rightarrow
S_{fin}(\mathcal{S},\Omega_x)$ \\  \, \, \, $\Uparrow$ \, \, \, \,
\,\,  \, \, \, \, \, $ \Uparrow $ \,\, \, \, \,\, \, \, \, \, \,
$\Uparrow $ \,\, \, \, \,\, \, \,\, \, \, $\Uparrow$
\\ $S_1(\mathcal{D},\Gamma_x) \Rightarrow
S_{fin}(\mathcal{D},\Gamma_x) \Rightarrow
S_1(\mathcal{D},\Omega_x) \Rightarrow
S_{fin}(\mathcal{D},\Omega_x)$

\end{center}

\medskip
\medskip

$\bullet$ A space $X$ is $R$-separable, if $X$ $\models$
$S_1(\mathcal{D}, \mathcal{D})$ (Def. 47, \cite{bbm1}).

$\bullet$ A space $X$ is $M$-separable (selective separability),
if $X$ $\models$ $S_{fin}(\mathcal{D}, \mathcal{D})$.

$\bullet$ A space $X$ is selectively sequentially separable, if
$X$ $\models$ $S_{fin}(\mathcal{S}, \mathcal{S})$ (Def. 1.2,
\cite{bc}).

\medskip
\medskip
The following implications hold

\medskip
\begin{center}
$S_1(\mathcal{S},\mathcal{S}) \Rightarrow
S_{fin}(\mathcal{S},\mathcal{S}) \Rightarrow
S_1(\mathcal{S},\mathcal{D}) \Rightarrow
S_{fin}(\mathcal{S},\mathcal{D})$ \\  \, \, $\Uparrow$ \, \, \, \,
\,\, \,  \, \, \, $ \Uparrow $ \,\, \, \, \,\, \, \,
\, \, \, $\Uparrow $ \,\, \, \, \,\, \, \,\, \, \, $\Uparrow$ \\
$S_1(\mathcal{D},\mathcal{S}) \Rightarrow
S_{fin}(\mathcal{D},\mathcal{S}) \Rightarrow
S_1(\mathcal{D},\mathcal{D}) \Rightarrow
S_{fin}(\mathcal{D},\mathcal{D})$

\end{center}

\medskip

  If $X$ is a  space and $A\subseteq X$, then the sequential closure of $A$,
 denoted by $[A]_{seq}$, is the set of all limits of sequences
 from $A$. A set $D\subseteq X$ is said to be sequentially dense
 if $X=[D]_{seq}$. If $D$ is a countable sequentially dense subset
 of $X$ then $X$ call sequentially separable space.

 Call $X$ strongly sequentially dense in itself, if every dense subset of $X$ is sequentially dense, and, $X$ strongly sequentially separable, if $X$ is separable and
 every countable dense subset of $X$ is sequentially dense.
 Clearly, every strongly sequentially separable space is
 sequentially separable, and every sequentially separable space is
 separable.

 We recall that a subset of $X$ that is the
 complete preimage of zero for a certain function from~$C(X)$ is called a zero-set.
A subset $O\subseteq X$  is called  a cozero-set (or functionally
open) of $X$ if $X\setminus O$ is a zero-set.

\medskip
Recall that the $i$-weight $iw(X)$ of a space $X$ is the smallest
infinite cardinal number $\tau$ such that $X$ can be mapped by a
one-to-one continuous mapping onto a Tychonoff space of the weight
not greater than $\tau$.

\medskip

\medskip

\begin{theorem} (Noble \cite{nob}) \label{th31} Let $X$ be a   space. A space $C_{p}(X)$ is separable if and only if
$iw(X)=\aleph_0$.
\end{theorem}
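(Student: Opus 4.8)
The plan is to establish the two implications separately, in each case passing through a continuous bijection of $X$ onto a second countable Tychonoff space and using that such a space embeds in $\mathbb{R}^{\omega}$.

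$(\Leftarrow)$ Assume $iw(X)=\aleph_0$ and fix a continuous bijection $h\colon X\to M$ onto a Tychonoff space $M$ with $w(M)\le\aleph_0$; since a second countable Tychonoff space embeds in $\mathbb{R}^{\omega}$, I may assume $M\subseteq\mathbb{R}^{\omega}$. First I would show that $C_p(M)$ is separable by checking that the countable set $A$ of all polynomials with rational coefficients in the coordinate projections $\pi_n\colon M\to\mathbb{R}$ is dense in $C_p(M)$: given $g\in C(M)$, a finite $F=\{x_1,\dots,x_k\}\subseteq M$ and $\epsilon>0$, finitely many of the projections already separate the distinct points $x_1,\dots,x_k$, so composing a multivariate Lagrange interpolation polynomial with those projections yields a real polynomial taking the values $g(x_i)$ at each $x_i$, and rationalizing its coefficients keeps it within $\epsilon$ of $g$ on $F$. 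Next I would consider the dual map $h^{\#}\colon C_p(M)\to C_p(X)$, $g\mapsto g\circ h$, which is continuous, and verify that its range is dense in $C_p(X)$: for $f\in C(X)$, a finite $F\subseteq X$ and $\epsilon>0$, the image $h(F)$ is a finite subset of the Tychonoff space $M$ with distinct points and is therefore $C$-embedded, so some $g\in C(M)$ agrees with $f\circ h^{-1}$ on $h(F)$, whence $g\circ h\in\,<f,F,\epsilon>$. Finally, the $h^{\#}$-image of a countable dense subset of $C_p(M)$ is dense in $h^{\#}(C_p(M))$ and hence dense in $C_p(X)$.

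$(\Rightarrow)$ Assume $C_p(X)$ is separable and fix a countable dense set $\{f_n:n\in\omega\}\subseteq C_p(X)$. Define $h\colon X\to\mathbb{R}^{\omega}$ by $h(x)=(f_n(x))_{n\in\omega}$; this map is continuous. It is also injective: if $x\ne y$, then since $X$ is Tychonoff there is $f\in C(X)$ with $f(x)=0$ and $f(y)=1$, and by density some $f_n$ satisfies $|f_n(x)|<1/3$ and $|f_n(y)-1|<1/3$, so $f_n(x)\ne f_n(y)$ and $h(x)\ne h(y)$. Thus $h$ is a continuous bijection of $X$ onto the subspace $h(X)\subseteq\mathbb{R}^{\omega}$, which is Tychonoff of weight $\le\aleph_0$; hence $iw(X)\le\aleph_0$, and the reverse inequality holds by definition, so $iw(X)=\aleph_0$.

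I expect the $(\Leftarrow)$ direction to carry the main difficulty, concentrated in two points: first, the density of the countable algebra $A$ in $C_p(M)$, which is the pointwise-convergence form of the Stone--Weierstrass theorem and must be obtained by finite interpolation rather than invoked as the classical uniform statement; and second, the density of the range of $h^{\#}$, which relies on finite subsets of a Tychonoff space being $C$-embedded so that prescribed values can be realized by a function in $C(M)$. The converse is routine once one observes that a countable family of continuous functions separating the points of $X$ is exactly what condenses $X$ into $\mathbb{R}^{\omega}$.
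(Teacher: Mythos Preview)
Your argument is correct. Both directions are handled cleanly: the $(\Rightarrow)$ direction via the diagonal embedding into $\mathbb{R}^{\omega}$ determined by a countable dense family, and the $(\Leftarrow)$ direction via density of $h^{\#}(C_p(M))$ in $C_p(X)$ together with separability of $C_p(M)$ for $M\subseteq\mathbb{R}^{\omega}$. The interpolation step is fine once you note that the finitely many projections distinguishing $x_1,\dots,x_k$ send these to distinct points of some $\mathbb{R}^{N}$, where polynomial interpolation at a finite set is elementary; the $C$-embedding of finite subsets of a Tychonoff space is likewise standard.

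As to comparison with the paper: the paper does not prove this theorem at all. It is quoted as a classical result of Noble and used as a tool throughout (to translate separability of $C_p(X)$ into the condition $iw(X)=\aleph_0$), so there is no ``paper's own proof'' to compare with. Your write-up is essentially the standard proof one finds in the $C_p$-theory literature (e.g., Arhangel'skii's monograph), and nothing more is needed here.
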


\begin{definition} A space $X$ has {\bf $V$-property} ($X$ $\models$ $V$), if there
 exist  a condensation (one-to-one continuous mapping) $f: X \mapsto Y$ from the space $X$ on a
 separable metric space $Y$, such that $f(U)$ --- $F_{\sigma}$-set
 of $Y$ for any cozero-set $U$ of $X$.
\end{definition}

\begin{theorem} \label{th38} (Velichko \cite{vel}). Let $X$ be a Tychonoff space. A space $C_p(X)$ is
sequentially separable if and only if  $X$ $\models$ $V$.
\end{theorem}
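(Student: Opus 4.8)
The plan is to run both implications through a single classical input — the Lebesgue--Hausdorff characterisation of Baire class $1$ functions: if $Y$ is metrizable and $g\colon Y\to\mathbb{R}$, then $g$ is a pointwise limit of a sequence of continuous functions if and only if $g^{-1}(U)$ is an $F_\sigma$ subset of $Y$ for every open $U\subseteq\mathbb{R}$. The bridge to $C_p(X)$ is this: whenever $f\colon X\to Y$ is a condensation onto a separable metric space, $\phi\mapsto\phi\circ f^{-1}$ is a bijection from $C(X)$ onto a family of functions on $Y$, and since $f$ is a bijection it converts pointwise convergence of sequences in $C_p(X)$ into pointwise convergence on $Y$ and back. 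So $X\models V$ should be exactly the statement that, for a well-chosen $f$, every $\phi\circ f^{-1}$ is of Baire class $1$ on $Y$, and the proof amounts to matching this with sequential separability of $C_p(X)$.

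For \emph{necessity}, start from a countable sequentially dense $D=\{\phi_n:n\in\omega\}\subseteq C_p(X)$. Then $D$ is dense, hence separates points of $X$ (for $x\neq y$ approximate on $\{x,y\}$ a function $\psi\in C(X)$ with $\psi(x)=0$, $\psi(y)=1$), so $f(x):=(\phi_n(x))_n$ is a condensation of $X$ onto the separable metric space $Y:=f(X)\subseteq\mathbb{R}^\omega$, with $\phi_n=\pi_n\circ f$ for the $n$-th projection $\pi_n$. Given a cozero-set $U=\mathrm{coz}(h)$, use $h\in C(X)=[D]_{seq}$ to get $\phi_{n_k}\to h$ pointwise on $X$; then $\pi_{n_k}|_Y\to h\circ f^{-1}$ pointwise on $Y$, so $g:=h\circ f^{-1}$ is of Baire class $1$ on the metric space $Y$. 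By the Lebesgue--Hausdorff theorem $f(U)=g^{-1}(\mathbb{R}\setminus\{0\})$ is $F_\sigma$ in $Y$, so $f$ witnesses $X\models V$.

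For \emph{sufficiency}, take a condensation $f\colon X\to Y$ onto a separable metric space, which we may take to be a subspace of the Hilbert cube $Q=[0,1]^\omega$, with $f(U)$ an $F_\sigma$ subset of $Y$ for every cozero-set $U$ of $X$. For $\phi\in C(X)$ and open $U\subseteq\mathbb{R}$ the set $\phi^{-1}(U)$ is a cozero-set of $X$ (preimage of a cozero-set of $\mathbb{R}$ under a continuous map), so $(\phi\circ f^{-1})^{-1}(U)=f(\phi^{-1}(U))$ is $F_\sigma$ in $Y$, and by the Lebesgue--Hausdorff theorem $\phi\circ f^{-1}$ is of Baire class $1$ on $Y$. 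Next I would produce one countable $D'\subseteq C(Y)$ whose sequential closure (for pointwise convergence) contains \emph{every} Baire class $1$ function on $Y$: fix a countable $\|\cdot\|_\infty$-dense $D''\subseteq C(Q)$ and set $D'=\{d|_Y:d\in D''\}$; for a Baire class $1$ function $g$ on $Y$, extend it to a Baire class $1$ function $\hat g$ on $Q$ (Kuratowski's theorem on extending Baire class functions from subspaces of a metric space), write $\hat g=\lim_k c_k$ pointwise with $c_k\in C(Q)$, pick $d_k\in D''$ with $\|c_k-d_k\|_\infty<1/k$, and note $d_k\to\hat g$ pointwise on $Q$, hence $d_k|_Y\to g$ pointwise on $Y$. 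With such a $D'$, for each $\phi\in C(X)$ choose $d_k\in D'$ with $d_k\to\phi\circ f^{-1}$ pointwise on $Y$; then $d_k\circ f\to\phi$ pointwise on $X$, so the countable set $\{d\circ f:d\in D'\}\subseteq C(X)$ is sequentially dense and $C_p(X)$ is sequentially separable.

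The routine part is the passage through $\phi\leftrightarrow\phi\circ f^{-1}$ combined with the classical equivalence of ``Baire class $1$'' and ``$F_\sigma$ preimages of open sets'' on metric spaces. The real work is the construction of the single countable $D'$ in the sufficiency direction: a countable dense subset of $C_p(Y)$ exists by Theorem~\ref{th31}, but its sequential closure need not reach all Baire class $1$ functions, so one genuinely needs the compact model $Q$ (to make uniform approximation available) and an extension theorem to transport each Baire class $1$ function on $Y$ to one on $Q$, after which the diagonalisation $|d_k-\hat g|\le\|d_k-c_k\|_\infty+|c_k-\hat g|$ closes the gap. I expect this construction — in particular locating and applying the right extension result for Baire class $1$ functions — to be the main obstacle.
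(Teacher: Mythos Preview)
The paper does not prove this theorem: it is stated as a known result of Velichko \cite{vel} and used repeatedly as a black box, so there is no in-paper proof against which to compare your argument.

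Your outline is nonetheless correct and is essentially the classical route. The necessity direction is clean. In the sufficiency direction your instinct is right that an arbitrary countable dense subset of $C_p(Y)$ need not have every Baire class~$1$ function on $Y$ in its sequential closure; the detour through an embedding $Y\hookrightarrow Q=[0,1]^\omega$, Kuratowski's extension theorem for Baire class~$\alpha\ge 1$ functions from a metric subspace to the ambient metric space, and uniform approximation in $C(Q)$ is exactly the standard remedy, and the Lebesgue--Hausdorff characterisation (Baire class~$1$ on a separable metric space iff all preimages of open sets are $F_\sigma$) holds in the generality you need.
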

\medskip

Recall that the cardinal $\mathfrak{p}$ is the smallest cardinal
so that there is a collection of $\mathfrak{p}$ many subsets of
the natural numbers with the strong finite intersection property
but no infinite pseudo-intersection. Note that $\omega_1 \leq
\mathfrak{p} \leq \mathfrak{c}$.

For $f,g\in \mathbb{N}^{\mathbb{N}}$, let $f\leq^{*} g$ if
$f(n)\leq g(n)$ for all but finitely many $n$. $\mathfrak{b}$ is
the minimal cardinality of a $\leq^{*}$-unbounded subset of
$\mathbb{N}^{\mathbb{N}}$. A set $B\subset [\mathbb{N}]^{\infty}$
is unbounded if the set of all increasing enumerations of elements
of $B$ is unbounded in $\mathbb{N}^{\mathbb{N}}$, with respect to
$\leq^{*}$. It follows that $|B|\geq \mathfrak{b}$. A subset $S$
of the real line is called a $Q$-set if each one of its subsets is
a $G_{\delta}$. The cardinal $\mathfrak{q}$ is the smallest
cardinal so that for any $\kappa< \mathfrak{q}$ there is a $Q$-set
of size $\kappa$. (See \cite{do} for more on small cardinals
including $\mathfrak{p}$).



\medskip

\section{$S_{1}(\mathcal{D},\mathcal{D})$ --- $R$-separable}

\medskip

In \cite{sak} (Lemma, Theorem 1), M. Sakai proved:

\begin{theorem} $(Sakai)$ For each   space $X$ the
following are equivalent.
\begin{enumerate}

\item $C_p(X)$ $\models$ $S_{1}(\Omega_x,\Omega_x)$.

\item $X^n$ $\models$ $S_{1}(\mathcal{O},\mathcal{O})$ $(X^n$ has
Rothberger's property $C^{''})$ for each $n\in \omega$.

\item $X$ $\models$ $S_1(\Omega, \Omega)$.

\end{enumerate}

\end{theorem}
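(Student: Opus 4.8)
The plan is to prove the three-way equivalence by establishing the cycle $(3)\Rightarrow(1)\Rightarrow(2)\Rightarrow(3)$, exploiting the standard duality between covering properties of $X$ and local properties of $C_p(X)$ at the point $\mathbf{0}$. The central technical device is the bijection between open $\omega$-covers of $X$ and subsets of $C_p(X)$ having $\mathbf{0}$ in their closure: given an open $\omega$-cover $\mathcal{U}=\{U_\alpha\}$, one associates to each $U_\alpha$ a function $f_\alpha\in C(X)$ that is $0$ on a prescribed finite set and bounded away from $0$ off $U_\alpha$ (using Tychonoff-ness), so that $\{f_\alpha\}\in\Omega_{\mathbf{0}}$; conversely, given $A\in\Omega_{\mathbf{0}}$, the sets $\{x: |f(x)|<\varepsilon\}$ for $f\in A$ form an $\omega$-cover. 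This correspondence is due to Gerlits--Nagy and is the engine behind Sakai's original argument.

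First I would prove $(3)\Rightarrow(1)$. Take a sequence $\{A_n\}$ of elements of $\Omega_{\mathbf{0}}$ in $C_p(X)$. Without loss of generality we may translate so that the target point is $\mathbf{0}$ (local homogeneity of $C_p(X)$). From each $A_n$ extract an open $\omega$-cover $\mathcal{U}_n$ of $X$ via the correspondence above; if some $\mathcal{U}_n$ fails to be an $\omega$-cover because it contains $X$, one trims it. Apply $S_1(\Omega,\Omega)$ to $\{\mathcal{U}_n\}$ to get $U_n\in\mathcal{U}_n$ with $\{U_n\}\in\Omega$. Pulling back, the corresponding $f_n\in A_n$ satisfy $\mathbf{0}\in\overline{\{f_n:n\in\omega\}}$: indeed, given a basic neighborhood $\langle \mathbf{0},F,\varepsilon\rangle$ with $F$ finite, the $\omega$-cover property provides an $n$ with $F\subseteq U_n$, hence $|f_n|<\varepsilon$ on $F$. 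The only subtlety is matching the $\varepsilon$-thresholds and finite sets on both sides of the translation, which is routine bookkeeping with a countable dense set of thresholds. The implication $(1)\Rightarrow(3)$ runs the same correspondence in reverse: an arbitrary sequence of open $\omega$-covers yields a sequence in $\Omega_{\mathbf{0}}$, and the selected functions translate back to a selected $\omega$-subcover.

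Next, for the equivalence of $(2)$ and $(3)$, I would invoke the classical Gerlits--Nagy--type equivalence: $X\models S_1(\Omega,\Omega)$ if and only if every finite power $X^n$ has the Rothberger property $S_1(\mathcal{O},\mathcal{O})$. The direction $(3)\Rightarrow(2)$ uses that an open cover of $X^n$ refines, via finite intersections of preimages, to an $\omega$-cover of $X$; applying $S_1(\Omega,\Omega)$ to copies of this and reassembling gives a Rothberger selection in $X^n$. For $(2)\Rightarrow(3)$, given a sequence of $\omega$-covers $\{\mathcal{U}_n\}$ of $X$, one partitions $\omega$ into infinitely many infinite pieces, and on the $n$-th power $X^n$ uses the $\omega$-cover $\{U^n: U\in\mathcal{U}_k\}$ for the $k$'s in the $n$-th piece, applying Rothberger on $X^n$; the union of all choices is shown to be an $\omega$-cover of $X$ by a diagonal argument. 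I expect the main obstacle to lie precisely here — in the careful matching of finite powers with the single-space $\omega$-cover selection, ensuring that finitely many preimages under coordinate projections are handled uniformly — rather than in the $C_p$-to-covers translation, which is by now standard. Once these pieces are in place, chaining the implications closes the cycle and completes the proof.
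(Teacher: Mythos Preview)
The paper does not supply its own proof of this theorem; it simply attributes the result to Sakai and cites \cite{sak} (Lemma, Theorem~1). There is therefore nothing in the paper to compare your argument against beyond that citation.

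Your outline is essentially Sakai's original argument and is correct in approach: the Gerlits--Nagy style dictionary between $\omega$-covers of $X$ and elements of $\Omega_{\mathbf{0}}$ in $C_p(X)$ handles $(1)\Leftrightarrow(3)$, and the finite-powers equivalence handles $(2)\Leftrightarrow(3)$. The one place worth tightening is the threshold bookkeeping in $(3)\Rightarrow(1)$. As written, if you set $\mathcal{U}_n=\{f^{-1}(-\varepsilon,\varepsilon):f\in A_n\}$ for a single $\varepsilon$ and select an $\omega$-cover $\{U_n\}$, you only get $|f_n|<\varepsilon$ on the finite set, with no control on which $n$ witnesses a given finite set; so arbitrary small neighborhoods of $\mathbf{0}$ need not be hit. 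What actually makes the argument close is re-indexing $\{A_n\}$ as $\{A_{n,k}\}$ and, for each $k$, applying $S_1(\Omega,\Omega)$ to the sequence $(\,\{f^{-1}(-1/k,1/k):f\in A_{n,k}\}\,)_n$ separately, so that for every $k$ the selections $\{f_{n,k}:n\in\omega\}$ already witness threshold $1/k$. Your phrase ``routine bookkeeping with a countable dense set of thresholds'' gestures at this, but since it is precisely the step that makes $(3)\Rightarrow(1)$ go through, it should be spelled out rather than deferred.
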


 In (\cite{sch}, Theorem 13) M. Scheeper was proved the following result

\begin{theorem}$(Scheeper)$ \label{th21} For each separable metric space $X$, the
following are equivalent:

\begin{enumerate}

\item $C_p(X)$ $\models$  $S_{1}(\mathcal{D},\mathcal{D})$;

\item $X$ $\models$ $S_{1}(\Omega, \Omega)$.

\end{enumerate}

\end{theorem}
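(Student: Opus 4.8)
The plan is to prove the two implications by setting up a dictionary between dense subsets of $C_p(X)$ and open $\omega$-covers of $X$, much as in the theorem of Sakai quoted above. First I would fix the standing reductions. Since $X$ is separable metrizable it is second countable, so $iw(X)=\aleph_0$ and hence $C_p(X)$ is separable by Theorem~\ref{th31}; fix once and for all a countable dense set $\{g_k:k\in\omega\}\subseteq C_p(X)$. I would then record the elementary observation that drives everything: a sequence $(f_n)_{n\in\omega}$ is dense in $C_p(X)$ precisely when each $g_k$ lies in its closure, i.e. when for every $k\in\omega$, every finite $F\subseteq X$ and every $\varepsilon>0$ there is $n$ with $|f_n(x)-g_k(x)|<\varepsilon$ for all $x\in F$. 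This is the point where separability of $C_p(X)$ is spent: it replaces the a priori (possibly $\mathfrak{c}$-many) basic open sets that a dense sequence must meet by countably many ``targets'' $g_k$, each still carrying a quantifier over all finite $F\subseteq X$ --- which is exactly the shape of an $\omega$-cover selection.

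For $(2)\Rightarrow(1)$ I would start from dense sets $D_n\subseteq C_p(X)$, partition $\omega$ into infinitely many infinite blocks $\{M_{k,m}:k,m\in\omega\}$, and on a block $M_{k,m}$ associate to each $h\in D_n$ (with $n\in M_{k,m}$) the cozero set $U_h=\{x\in X:|h(x)-g_k(x)|<1/m\}$. Density of $D_n$ guarantees that every finite $F\subseteq X$ lies in some $U_h$ with $h\in D_n$, so $\mathcal{U}^{k,m}_n:=\{U_h:h\in D_n\}$ is an open $\omega$-cover of $X$ (the degenerate subcase where some $U_h$ equals $X$ is trivial, since such an $h$ already fulfils the block's requirement). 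Applying $S_1(\Omega,\Omega)$ to $(\mathcal{U}^{k,m}_n)_{n\in M_{k,m}}$ returns, for each $n\in M_{k,m}$, a function $h_n\in D_n$ with $\{U_{h_n}:n\in M_{k,m}\}\in\Omega$; put $f_n:=h_n$. Carrying this out over all $(k,m)$ yields $f_n\in D_n$ satisfying the approximation criterion above, hence $\{f_n:n\in\omega\}$ is dense and $C_p(X)\models S_1(\mathcal{D},\mathcal{D})$.

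For $(1)\Rightarrow(2)$ I would run the dictionary in reverse. Given open $\omega$-covers $\mathcal{U}_n$ of $X$, set $E_n=\{f\in C_p(X):f\restriction(X\setminus U)=\mathbf{0}\text{ for some }U\in\mathcal{U}_n\}$. The key lemma is that each $E_n$ is dense: given a basic neighbourhood with finite support $F$, choose $U\in\mathcal{U}_n$ with $F\subseteq U$ and multiply a function realizing the prescribed values on $F$ by a continuous function equal to $1$ on $F$ and to $0$ off $U$ (available since $F$ is finite, $X\setminus U$ is closed, and $X$ is Tychonoff). Then $S_1(\mathcal{D},\mathcal{D})$ gives $f_n\in E_n$ with $\{f_n:n\in\omega\}$ dense; pick $U_n\in\mathcal{U}_n$ with $f_n$ vanishing off $U_n$. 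For finite $F\subseteq X$, density of $\{f_n\}$ produces $n$ with $f_n(x)\neq 0$ for every $x\in F$, whence $F\subseteq U_n$; and $X\notin\{U_n:n\in\omega\}$ because $X$ belongs to no $\mathcal{U}_n$. Thus $\{U_n:n\in\omega\}\in\Omega$, so $X\models S_1(\Omega,\Omega)$.

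The only genuinely nontrivial step is the first one --- recognizing that $R$-separability of $C_p(X)$ unwinds into a countable family of ``approximate $g_k$ on every finite $F$'' tasks, each of which is an $\omega$-cover selection; after that, the two directions are bookkeeping (the block decomposition and the $U_h=X$ degeneracy in one direction, the density of $E_n$ in the other), and I expect the block management and the degenerate cases to be the fussiest part to write cleanly. As an alternative for $(2)\Rightarrow(1)$ one can instead invoke Sakai's theorem \cite{sak} to pass from $X\models S_1(\Omega,\Omega)$ to $C_p(X)\models S_1(\Omega_{\mathbf{0}},\Omega_{\mathbf{0}})$ and then, working block-wise, apply $S_1(\Omega_{\mathbf{0}},\Omega_{\mathbf{0}})$ to the translates $D_n-g_k$, turning the $R$-separability problem into countably many local selections at $\mathbf{0}$.
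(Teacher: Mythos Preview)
The paper does not prove this statement; it quotes it as Scheepers' result (Theorem~13 in \cite{sch}) and immediately moves on to the more general Theorem~\ref{th11}, which it then obtains by assembling citations (Theorem~57 of \cite{bbm1}, Sakai \cite{sak}, and Noble's Theorem~\ref{th31}) rather than by a direct argument. So there is no in-paper proof to compare against.

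Your proposal is correct and self-contained. The direction $(1)\Rightarrow(2)$ via the dense sets $E_n=\{f:f\restriction(X\setminus U)=\mathbf{0}\text{ for some }U\in\mathcal{U}_n\}$ is exactly the standard translation and matches the construction the paper uses repeatedly in later sections (e.g.\ in the proofs of Theorems~\ref{th71} and~\ref{th44}). Your $(2)\Rightarrow(1)$ via the block decomposition and the $\omega$-covers $\{U_h:h\in D_n\}$ with $U_h=\{x:|h(x)-g_k(x)|<1/m\}$ is a clean direct argument; the alternative you sketch at the end---passing through Sakai's $S_1(\Omega_{\mathbf{0}},\Omega_{\mathbf{0}})$ and translating by $g_k$---is closer in spirit to how the paper assembles Theorem~\ref{th11} from cited results. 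One small point worth making explicit when you write it up: in the block argument you invoke $S_1(\Omega,\Omega)$ once per block $(k,m)$, i.e.\ countably many times; this is of course legitimate since it is a property of $X$, but it reads more smoothly if you say so, or alternatively reindex so that a single application of $S_1(\Omega,\Omega)$ to the full sequence suffices (which is possible since the target $\omega$-cover condition for the selected $\{U_n:n\in\omega\}$ is weaker than what you actually obtain block-wise).
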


\medskip

By Theorem 57 in \cite{bbm1}, \cite{sak} and Theorem \ref{th31},
we have

\begin{theorem}\label{th11} For a   space $X$, the
following are equivalent:

\begin{enumerate}

\item $C_p(X)$ $\models$ $S_{1}(\mathcal{D},\mathcal{D})$;

\item $C_p(X)$ $\models$ $S_{1}(\Omega_x,\Omega_x)$, and is
separable;

\item $C_p(X)$ $\models$ $S_{1}(\mathcal{D},\Omega_x)$, and is
separable;

\item $X$ $\models$ $S_{1}(\Omega, \Omega)$, and $iw(X)=\aleph_0$;

\item $X^n$ $\models$ $S_{1}(\mathcal{O},\mathcal{O})$ for each
$n\in \omega$, and $iw(X)=\aleph_0$.

\end{enumerate}

\end{theorem}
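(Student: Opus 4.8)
My plan is to split the five statements into the overlapping blocks $\{(2),(4),(5)\}$ and $\{(1),(2),(3)\}$. For the first block I would simply quote the cited results: the Theorem of Sakai stated above gives $C_p(X)\models S_{1}(\Omega_x,\Omega_x)\Leftrightarrow X\models S_{1}(\Omega,\Omega)\Leftrightarrow(\forall n\in\omega)\,X^n\models S_{1}(\mathcal{O},\mathcal{O})$, while Noble's Theorem~\ref{th31} says that $C_p(X)$ is separable iff $iw(X)=\aleph_0$; conjoining the ``$S_1$'' clause with the ``separable'' clause yields $(2)\Leftrightarrow(4)\Leftrightarrow(5)$ at once.

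For the block $\{(1),(2),(3)\}$ I would first dispatch the elementary implications, using throughout that $C_p(X)$ (for nonempty $X$) is a nondegenerate topological group, hence homogeneous and without isolated points, so that $D\setminus\{f\}$ is dense and lies in $\Omega_f$ whenever $D$ is dense in $C_p(X)$ and $f\in C_p(X)$. Applying $S_{1}(\mathcal{D},\mathcal{D})$ to a constant sequence shows that $(1)$ makes $C_p(X)$ separable. Given dense sets $D_n$ and a point $f$, running $S_{1}(\mathcal{D},\mathcal{D})$ --- resp.\ $S_{1}(\Omega_f,\Omega_f)$ --- on $\{D_n\setminus\{f\}\}_n$ produces $d_n\in D_n$ with $f\in\overline{\{d_n:n\in\omega\}}\setminus\{d_n:n\in\omega\}$, i.e.\ $\{d_n\}\in\Omega_f$; this gives $(1)\Rightarrow(3)$ and $(2)\Rightarrow(3)$. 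Conversely, assuming $C_p(X)$ separable, fix a countable dense set $\{e_k:k\in\omega\}$ and a partition of $\omega$ into infinitely many infinite pieces $\{I_k\}_{k\in\omega}$; given dense sets $D_n$, for each $k$ apply $S_{1}(\Omega_{e_k},\Omega_{e_k})$ --- resp.\ $S_{1}(\mathcal{D},\Omega_{e_k})$ --- to $\{D_n\setminus\{e_k\}:n\in I_k\}$ to obtain $d_n\in D_n$ $(n\in I_k)$ with $e_k\in\overline{\{d_n:n\in I_k\}}$; then $\{d_n:n\in\omega\}$ has every $e_k$ in its closure, so it is dense and witnesses $S_{1}(\mathcal{D},\mathcal{D})$. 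This gives $(2)\Rightarrow(1)$ and $(3)\Rightarrow(1)$, and hence $(1)\Leftrightarrow(3)$.

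What remains --- the only genuinely non-elementary point --- is the implication $(1)\Rightarrow(2)$: that $R$-separability of $C_p(X)$ forces countable strong fan tightness of $C_p(X)$ or, equivalently via Sakai's Theorem, that $C_p(X)\models S_{1}(\mathcal{D},\mathcal{D})$ forces $X\models S_{1}(\Omega,\Omega)$. This is the content of Theorem~57 of \cite{bbm1}, which I would cite; it closes the loop $(1)\Rightarrow(2)\Rightarrow(1)$, so $(1)\Leftrightarrow(2)\Leftrightarrow(3)$, and together with the first block all five statements are equivalent.

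I expect this last step to be the main obstacle. The reason it cannot be squeezed out of the bookkeeping above is that $C_p(X)$ is not first countable: a selection drawn from \emph{dense} subsets of $C_p(X)$ does not visibly descend to a selection from arbitrary sets having $\mathbf{0}$ in their closure, since the neighbourhood filter of $\mathbf{0}$ has no countable base, so neither the one-point deletion nor the partition of $\omega$ suffices to push it through. Overcoming it genuinely uses the linear/topological-group structure of $C_p(X)$ together with $iw(X)=\aleph_0$; for separable metrizable $X$ the implication already follows from Scheepers' Theorem~\ref{th21} combined with Sakai's Theorem, which serves as a sanity check and as the template for the general argument in \cite{bbm1}.
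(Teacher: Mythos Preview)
Your proposal is correct and follows essentially the same route as the paper: the paper's entire proof of this theorem is the single sentence ``By Theorem~57 in \cite{bbm1}, \cite{sak} and Theorem~\ref{th31}'', i.e.\ exactly the three external results you invoke (Sakai's equivalence for $S_1(\Omega_x,\Omega_x)$, Noble's characterization of separability of $C_p(X)$, and the Bella--Bonanzinga--Matveev result for the passage between $S_1(\mathcal{D},\mathcal{D})$ and $S_1(\Omega_x,\Omega_x)$). You simply unpack more than the paper does: you isolate which implications among $(1),(2),(3)$ are elementary bookkeeping (via homogeneity, absence of isolated points, and a partition-of-$\omega$ trick) and pinpoint $(1)\Rightarrow(2)$ as the one genuinely requiring \cite{bbm1}, whereas the paper leaves all of this implicit in its citation. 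Your diagnosis of why $(1)\Rightarrow(2)$ resists the elementary treatment --- that arbitrary sets in $\Omega_{\mathbf{0}}$ need not be dense, so $S_1(\mathcal{D},\mathcal{D})$ cannot be applied directly --- is accurate and worth keeping.
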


\begin{corollary} For a separable metrizable space $X$, the
following are equivalent:

\begin{enumerate}

\item $C_p(X)$ $\models$ $S_{1}(\mathcal{D},\mathcal{D})$
[$R$-separable];

\item $C_p(X)$ $\models$ $S_{1}(\Omega_x,\Omega_x)$ [countable
strong fan tightness];

\item $C_p(X)$ $\models$ $S_{1}(\mathcal{D},\Omega_x)$ [countable
strong fan tightness with respect to dense subspaces];

\item $X$ $\models$ $S_{1}(\Omega, \Omega)$;

\item $X^n$ $\models$ $S_{1}(\mathcal{O},\mathcal{O})$ for each
$n\in \omega$ [$X^n$ is Rothberger].

\end{enumerate}

\end{corollary}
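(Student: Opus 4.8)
The plan is to derive the corollary directly from Theorem \ref{th11}, the point being that for a separable metrizable space the side conditions on the $i$-weight (equivalently, the separability of $C_p(X)$) appearing there are automatically satisfied and may therefore be dropped.

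First I would observe that a separable metrizable space $X$ has $iw(X)=\aleph_0$: the identity map of $X$ onto itself is already a condensation onto a Tychonoff space of countable weight, so $iw(X)\le w(X)=\aleph_0$. Consequently, by Theorem \ref{th31} (Noble), $C_p(X)$ is separable. Thus, in the setting of the corollary, the conjunct ``$iw(X)=\aleph_0$'' occurring in items (4) and (5) of Theorem \ref{th11}, and the conjunct ``and is separable'' occurring in items (2) and (3) of Theorem \ref{th11}, are each a true statement and can be deleted without affecting the truth value of the statement in which they occur.

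The remaining step is purely a matter of matching the lists. Item (1) of the corollary is item (1) of Theorem \ref{th11} verbatim; items (2), (3), (4), (5) of the corollary are, respectively, items (2), (3), (4), (5) of Theorem \ref{th11} with the redundant conjunct removed. Since, by the cited theorem, statements (1)--(5) of Theorem \ref{th11} are pairwise equivalent, and each statement of the corollary is equivalent to the corresponding statement of Theorem \ref{th11} by the previous paragraph, the statements (1)--(5) of the corollary are pairwise equivalent as well.

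I do not expect a genuine obstacle here; the only point that needs a line of care is the verification just described, namely that metrizability forces $iw(X)=\aleph_0$ and hence that removing these true conjuncts preserves all the equivalences. For the reader's convenience one may also note that the equivalence (4)~$\Leftrightarrow$~(5) of the corollary is exactly the restriction to separable metric spaces of the equivalence between $X\models S_1(\Omega,\Omega)$ and ``$X^n$ has Rothberger's property $C''$ for every $n\in\omega$'' recorded in Sakai's theorem above, so that in the metrizable case the whole list can alternatively be assembled from the theorems of Sakai, Scheepers (Theorem \ref{th21}), and Noble (Theorem \ref{th31}).
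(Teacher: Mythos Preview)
Your proposal is correct and matches the paper's approach: the corollary is stated there without proof, immediately after Theorem~\ref{th11}, and is meant to be read as the specialization of that theorem to separable metrizable $X$, where $iw(X)=\aleph_0$ (hence separability of $C_p(X)$) is automatic. The one slip to fix is in your last paragraph, where you write ``metrizability forces $iw(X)=\aleph_0$''; it is \emph{separable} metrizability that does, as you correctly used earlier.
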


\section{$S_{fin}(\mathcal{D},\mathcal{D})$ --- $M$-separable}

 In (\cite{arh}, Theorem 2.2.2 in \cite{arh2}) A.V. Arhangel'skii was proved the following result

\begin{theorem} $(Arhangel'skii)$ \label{th46} For a   space $X$, the
following are equivalent:

\begin{enumerate}

\item $C_p(X)$ $\models$ $S_{fin}(\Omega_x, \Omega_x)$;

 \item $(\forall n\in \omega)$ $X^{n}$ $\models$ $S_{fin}(\mathcal{O},
\mathcal{O})$.

\end{enumerate}

\end{theorem}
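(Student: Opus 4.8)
The plan is to prove the chain $(1)\Leftrightarrow[X\models S_{fin}(\Omega,\Omega)]\Leftrightarrow(2)$, where $\Omega$ denotes the family of open $\omega$-covers of $X$. Since $C_p(X)$ is a topological group, translation by $-f$ is a self-homeomorphism carrying $f$ to $\mathbf 0$ and $\Omega_f$ onto $\Omega_{\mathbf 0}$, so $(1)$ is equivalent to the single assertion $C_p(X)\models S_{fin}(\Omega_{\mathbf 0},\Omega_{\mathbf 0})$, and I would work at $\mathbf 0$ throughout. The second equivalence is the classical ``finite powers'' fact: $\{U^n:U\in\mathcal U\}$ is an open cover of $X^n$ whenever $\mathcal U$ is an $\omega$-cover of $X$ (the finitely many coordinates of a point of $X^n$ lie in one member), which, after splitting $\omega$ into infinitely many infinite blocks and running the Menger selection for $X^n$ on each block, yields $(2)\Rightarrow[X\models S_{fin}(\Omega,\Omega)]$; conversely, from an open cover $\mathcal W$ of $X^n$ the collection of all open $U\subseteq X$ with $U^n$ contained in a finite union of members of $\mathcal W$ is an $\omega$-cover of $X$ (a short argument using complete regularity turns a finite subfamily of $\mathcal W$ covering $F^n$, for finite $F\subseteq X$, into one such $U\supseteq F$), and $S_{fin}(\Omega,\Omega)$ applied to these $\omega$-covers produces the Menger selections for $X^n$. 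The heart of the matter is thus the first equivalence, which rests on a dictionary between subsets of $C_p(X)$ accumulating at $\mathbf 0$ and open $\omega$-covers of $X$.

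For $[X\models S_{fin}(\Omega,\Omega)]\Rightarrow C_p(X)\models S_{fin}(\Omega_{\mathbf 0},\Omega_{\mathbf 0})$, let $A_m\subseteq C_p(X)$ with $\mathbf 0\in\overline{A_m}\setminus A_m$ and fix a partition $\{S_k:k\geq 1\}$ of $\omega$ into infinite sets. For $m\in S_k$ put $\mathcal U_{m,k}:=\{\,g^{-1}((-1/k,1/k)):g\in A_m\,\}$; since $\mathbf 0\in\overline{A_m}$, every finite $F\subseteq X$ lies in a member of $\mathcal U_{m,k}$, so $\mathcal U_{m,k}$ is an open $\omega$-cover of $X$ (if a member happens to equal $X$ one simply keeps the corresponding function in $B_m$, which already settles level $k$). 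Apply $S_{fin}(\Omega,\Omega)$ to $(\mathcal U_{m,k})_{m\in S_k}$ for each $k$ to obtain finite $\mathcal F_{m,k}\subseteq\mathcal U_{m,k}$ with $\bigcup_{m\in S_k}\mathcal F_{m,k}\in\Omega$, and let $B_m\subseteq A_m$ be the finite set of functions producing $\mathcal F_{m,k}$ for the unique $k$ with $m\in S_k$. Given finite $F$ and $\varepsilon>0$, choose $k$ with $1/k<\varepsilon$: the $\omega$-cover $\bigcup_{m\in S_k}\mathcal F_{m,k}$ has a member $g^{-1}((-1/k,1/k))\supseteq F$ with $g\in B_m$, whence $|g|<\varepsilon$ on $F$. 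Thus $\mathbf 0\in\overline{\bigcup_m B_m}$, and $\mathbf 0\notin\bigcup_m B_m$ because $\mathbf 0\notin A_m$; by homogeneity $(1)$ follows.

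For $C_p(X)\models S_{fin}(\Omega_{\mathbf 0},\Omega_{\mathbf 0})\Rightarrow[X\models S_{fin}(\Omega,\Omega)]$, let $\mathcal U_m\in\Omega$, and let $A_m$ be the set of all $h\in C(X,[0,1])$ such that $h\equiv 1$ on $X\setminus U$ for some $U\in\mathcal U_m$. If $F\subseteq X$ is finite with $F\subseteq U\in\mathcal U_m$, then by complete regularity each $x\in F$ admits $h_x\in C(X,[0,1])$ with $h_x(x)=0$ and $h_x\equiv 1$ on the closed set $X\setminus U$, and $h:=\prod_{x\in F}h_x$ lies in $A_m$ and vanishes on $F$; since $\mathcal U_m$ is an $\omega$-cover this gives $\mathbf 0\in\overline{A_m}$, while $\mathbf 0\notin A_m$ because $U\neq X$. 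Apply $S_{fin}(\Omega_{\mathbf 0},\Omega_{\mathbf 0})$ to get finite $B_m\subseteq A_m$ with $\mathbf 0\in\overline{\bigcup_m B_m}$; for each $h\in B_m$ fix $U_h\in\mathcal U_m$ witnessing $h\in A_m$, and set $\mathcal F_m:=\{U_h:h\in B_m\}\subseteq\mathcal U_m$, a finite set. For any finite $F$, an $h\in\bigcup_m B_m$ with $|h|<1$ on $F$ (which exists since $\mathbf 0\in\overline{\bigcup_m B_m}$) satisfies $F\subseteq U_h\in\bigcup_m\mathcal F_m$, because $h\equiv 1$ off $U_h$; together with $X\notin\bigcup_m\mathcal F_m$ this shows $\bigcup_m\mathcal F_m\in\Omega$, completing the first equivalence.

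I expect the main obstacle to be exactly this first equivalence, i.e. the two-way dictionary between subsets of $C_p(X)$ accumulating at $\mathbf 0$ and open $\omega$-covers of $X$. In the direction from $X$ to $C_p(X)$, the subtlety is that a single $\omega$-cover controls the selected functions only at one ``precision level'' $1/k$, so one must spread the countably many precision levels over a partition of $\omega$ to force $\bigcup_m B_m$ to accumulate at $\mathbf 0$; one must also watch the convention $X\notin\mathcal U$ when verifying that the level-set families $\mathcal U_{m,k}$ really are $\omega$-covers. In the direction from $C_p(X)$ to $X$, the difficulty is instead to manufacture, out of a merely open $\omega$-cover, enough continuous functions that ``remember'' the cover — which is why the $h$ are built as products $\prod_{x\in F}h_x$ of point--closed-set separations, and where complete regularity of $X$ is genuinely used.
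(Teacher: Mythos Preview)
Your proof is correct and follows the classical Arhangel'skii--Sakai argument. However, the paper does not give its own proof of this theorem: it is stated as a known result of Arhangel'skii, with a citation to \cite{arh} and Theorem~2.2.2 of \cite{arh2}, and immediately followed by the remark (also cited, to \cite{jmss}) that $X\models S_{fin}(\Omega,\Omega)$ iff $X^n\models S_{fin}(\mathcal O,\mathcal O)$ for all $n$. So there is nothing to compare against; you have supplied precisely the standard proof that the paper takes for granted.

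One minor remark: in your sketch of the ``finite powers'' direction $S_{fin}(\Omega,\Omega)\Rightarrow(2)$, you invoke complete regularity to produce, for finite $F\subseteq X$, an open $U\supseteq F$ with $U^n$ inside a finite union from $\mathcal W$. Complete regularity is not needed there: a straightforward Wallace--type argument (for each $p\in F^n$ choose a basic open box $B_p\subseteq\bigcup\mathcal W$, then for each $x_i\in F$ intersect the relevant factors of the $B_p$'s to get $U_i\ni x_i$, and set $U=\bigcup_i U_i$) already gives $U^n\subseteq\bigcup_{p\in F^n}B_p$. The place where the Tychonoff hypothesis is genuinely used is exactly where you said, in building the functions $h=\prod_{x\in F}h_x$ witnessing $\mathbf 0\in\overline{A_m}$.
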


It is known (see \cite{jmss}) that $X$ $\models$ $S_{fin}(\Omega,
\Omega)$ iff $(\forall n\in \omega)$ $X^{n}$ $\models$
$S_{fin}(\mathcal{O}, \mathcal{O})$.

\medskip

By Theorem 21 in \cite{bbm1} and Theorem 3.9 in \cite{jmss}, we
have a next result.

\begin{theorem} \label{th44} For a   space $X$, the
following are equivalent:

\begin{enumerate}

\item $C_p(X)$ $\models$ $S_{fin}(\mathcal{D},\mathcal{D})$;

\item $X$ $\models$ $S_{fin}(\Omega, \Omega)$ and
$iw(X)=\aleph_0$;

\item $(\forall n\in \omega)$ $X^{n}\in S_{fin}(\mathcal{O},
\mathcal{O})$ and $iw(X)=\aleph_0$;

 \item $C_p(X)$ $\models$ $S_{fin}(\Omega_x, \Omega_x)$ and is  separable;

\item $C_p(X)$ $\models$ $S_{fin}(\mathcal{D}, \Omega_x)$ and is
separable.

\end{enumerate}

\end{theorem}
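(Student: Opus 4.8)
The plan is to prove the equivalence of the five conditions in Theorem \ref{th44} by assembling the cited external results rather than arguing from scratch. The logical skeleton will be a cycle together with a couple of direct implications, using Noble's theorem (Theorem \ref{th31}) to handle the separability/$i$-weight bookkeeping and Arhangel'skii's theorem (Theorem \ref{th46}) plus the Just--Miller--Scheepers--Szeptycki equivalence $X\models S_{fin}(\Omega,\Omega)\iff(\forall n)\,X^n\models S_{fin}(\mathcal{O},\mathcal{O})$ to handle the combinatorial content. First I would record that $(2)\iff(3)$ is immediate from the latter equivalence (the $iw(X)=\aleph_0$ clause is carried along verbatim), and that $(4)\iff(5)$ follows from the implications diagram in the excerpt once separability is assumed: $S_{fin}(\mathcal{D},\Omega_x)\Rightarrow S_{fin}(\Omega_x,\Omega_x)$ always holds since $C_p(X)$ dense in itself makes every $\Omega_x$-sequence extractable from a dense set, and conversely under separability a dense countable set lets one convert an $\Omega_x$-selection problem into a $\mathcal{D}$-selection problem.

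Next I would establish the bridge between the "$C_p$" conditions and the "$X$" conditions. For $(4)\iff(2)$: if $C_p(X)\models S_{fin}(\Omega_x,\Omega_x)$ and is separable, then by Theorem \ref{th46} we get $(\forall n)\,X^n\models S_{fin}(\mathcal{O},\mathcal{O})$, hence $X\models S_{fin}(\Omega,\Omega)$, and separability of $C_p(X)$ gives $iw(X)=\aleph_0$ by Noble's Theorem \ref{th31}; conversely $X\models S_{fin}(\Omega,\Omega)$ yields $C_p(X)\models S_{fin}(\Omega_x,\Omega_x)$ by Theorem \ref{th46}, and $iw(X)=\aleph_0$ gives separability of $C_p(X)$ again by Theorem \ref{th31}. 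The only genuinely new input is $(1)\iff(4)$ (equivalently $(1)\iff(5)$), and here I would invoke Theorem 21 of \cite{bbm1}, which is cited precisely for this purpose: it identifies $S_{fin}(\mathcal{D},\mathcal{D})$ for $C_p(X)$ with $M$-separability, and combined with the fact that $S_{fin}(\mathcal{D},\mathcal{D})$ forces separability and that $M$-separability of a dense-in-itself space is equivalent to $S_{fin}(\mathcal{D},\Omega_x)$-type selection, one closes the loop. Concretely, $(1)\Rightarrow$ $C_p(X)$ separable and $\models S_{fin}(\mathcal{D},\Omega_x)$ (restrict the target dense set to a neighbourhood trace at each point), which is $(5)$; and $(5)\Rightarrow(1)$ by a standard diagonalization that, given a sequence of dense sets $D_n$, enumerates a countable dense subset, uses $S_{fin}(\mathcal{D},\Omega_x)$ at successive points of that enumeration, and interleaves the finite selections to produce a dense subset of $C_p(X)$.

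The main obstacle I anticipate is the $(5)\Rightarrow(1)$ diagonalization: one must be careful that the countably many "pointwise" fan-tightness selections can be organized into a single sequence $\{B_n\}$ of finite sets with $B_n\subseteq D_n$ whose union is dense, not merely dense at each point of a fixed countable set. The trick is to fix a countable dense $E=\{e_k:k\in\omega\}\subseteq C_p(X)$, split $\omega$ into infinitely many infinite blocks, devote one block to each $e_k$, and on the $k$-th block run the $S_{fin}(\mathcal{D},\Omega_x)$-selection at $e_k$ against the sets $D_n$ with $n$ in that block; since each $e_k$ is then in the closure of the union of the corresponding finite selections, and $E$ is dense, the total union is dense in $C_p(X)$. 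All remaining steps are bookkeeping: tracking the $iw(X)=\aleph_0$ clause through Noble's theorem and checking that the one-sided implications in the displayed diagram specialize correctly when $C_p(X)$ is separable. I would present the proof as a short chain of references with the diagonalization spelled out in a sentence or two, since every nontrivial ingredient is already available in the excerpt or in \cite{bbm1,jmss,arh}.
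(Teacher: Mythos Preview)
Your proposal is correct and follows exactly the route the paper takes: the paper's entire proof is the single sentence ``By Theorem 21 in \cite{bbm1} and Theorem 3.9 in \cite{jmss}, we have a next result,'' together with the already-recorded Arhangel'skii theorem (Theorem~\ref{th46}) and Noble's theorem (Theorem~\ref{th31}). Your write-up simply unpacks those citations and spells out the $(5)\Rightarrow(1)$ diagonalization, which is precisely what a fleshed-out version of the paper's proof would contain.

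One small slip to fix: in your discussion of $(4)\Leftrightarrow(5)$ you have the trivial direction reversed. Since $C_p(X)$ has no isolated points, every dense set lies in $\Omega_x$, so the automatic implication is $S_{fin}(\Omega_x,\Omega_x)\Rightarrow S_{fin}(\mathcal{D},\Omega_x)$, i.e.\ $(4)\Rightarrow(5)$, not the direction you wrote. Your sentence ``$C_p(X)$ dense in itself makes every $\Omega_x$-sequence extractable from a dense set'' does not justify $(5)\Rightarrow(4)$, and in fact that implication is not elementary. This is harmless for your overall argument, because your cycle already closes without it: $(5)\Rightarrow(1)$ by your diagonalization, and $(1)\Rightarrow(4)$ is exactly what Theorem~21 of \cite{bbm1} supplies (this is the substantive content of that citation). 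So just drop the direct claim $(5)\Rightarrow(4)$ and route through $(5)\Rightarrow(1)\Rightarrow(4)$.
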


\begin{corollary} For a separable metrizable space $X$, the
following are equivalent:

\begin{enumerate}

\item $C_p(X)$ $\models$ $S_{fin}(\mathcal{D},\mathcal{D})$
[$M$-separable];

\item $C_p(X)$ $\models$ $S_{fin}(\Omega_x,\Omega_x)$ [countable
fan tightness];

\item $C_p(X)$ $\models$ $S_{fin}(\mathcal{D},\Omega_x)$
[countable strong fan tightness with respect to dense subspaces];

\item $X$ $\models$ $S_{fin}(\Omega, \Omega)$;

\item $X^n$ $\models$ $S_{fin}(\mathcal{O},\mathcal{O})$ for each
$n\in \omega$ [$X^n$ is Menger].

\end{enumerate}

\end{corollary}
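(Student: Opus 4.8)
The plan is to obtain this Corollary as a direct specialization of Theorem~\ref{th44}, the point being that the hypothesis $iw(X)=\aleph_0$ occurring there is automatic once $X$ is separable metrizable. First I would observe that if $X$ is separable and metrizable, then $X$ is itself a Tychonoff space of weight $\aleph_0$, so the identity map $\mathrm{id}\colon X\to X$ is a condensation witnessing $iw(X)=\aleph_0$. In particular, by Theorem~\ref{th31}, $C_p(X)$ is separable. Hence each clause ``and $iw(X)=\aleph_0$'' (respectively ``and is separable'') appearing in Theorem~\ref{th44} carries no information in the present setting and may be deleted.

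With that reduction in hand, I would simply match the five items. Item~(1) of the Corollary is item~(1) of Theorem~\ref{th44}; item~(2) of the Corollary is item~(4) of Theorem~\ref{th44} with the separability clause suppressed; item~(3) of the Corollary is item~(5) of Theorem~\ref{th44} with the separability clause suppressed; item~(4) of the Corollary is the property $X\models S_{fin}(\Omega,\Omega)$ that forms the first half of item~(2) of Theorem~\ref{th44} (its second half, $iw(X)=\aleph_0$, being free); and item~(5) of the Corollary is item~(3) of Theorem~\ref{th44}, again with the $i$-weight clause free, together with the ZFC equivalence $X\models S_{fin}(\Omega,\Omega)\iff(\forall n\in\omega)\,X^{n}\models S_{fin}(\mathcal{O},\mathcal{O})$ recorded just before Theorem~\ref{th44} (from \cite{jmss}). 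Since Theorem~\ref{th44} states that its five conditions are equivalent, the equivalence of (1)--(5) of the Corollary follows immediately.

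The bracketed phrases are only the terminology fixed in Section~2: $S_{fin}(\mathcal{D},\mathcal{D})$ is $M$-separability, $S_{fin}(\Omega_x,\Omega_x)$ is Arhangel'ski\u{\i}'s countable fan tightness, $S_{fin}(\mathcal{D},\Omega_x)$ is countable fan tightness with respect to dense subspaces, and the assertion that $X^{n}\models S_{fin}(\mathcal{O},\mathcal{O})$ for every $n$ is precisely the statement that all finite powers of $X$ are Menger; these identifications require no argument. I do not anticipate any genuine obstacle here: the entire content is the observation that the $i$-weight hypothesis of Theorem~\ref{th44} is vacuous for separable metrizable spaces, and the rest is bookkeeping.
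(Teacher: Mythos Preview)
Your proposal is correct and matches the paper's approach exactly: the corollary is stated immediately after Theorem~\ref{th44} without proof, precisely because it is the specialization you describe---for separable metrizable $X$ one has $iw(X)=\aleph_0$ automatically, so the extra clauses in Theorem~\ref{th44} become vacuous and the five conditions collapse to those listed. There is nothing to add.
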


\section{$S_{1}(\mathcal{D},\mathcal{S})$}

 Pytkeev \cite{py} and independently Gerlits \cite{ger1}, see also
\cite{arh2} and \cite{mcnt}, proved

\begin{theorem} \label{th80} For a  space $X$, the following statements are
equivalent:

\begin{enumerate}

\item $C_p(X)$ is Fr$\acute{e}$chet-Urysohn;

\item $C_p(X)$ is sequential;

\item $C_p(X)$ is a $k$-space.

\end{enumerate}

\end{theorem}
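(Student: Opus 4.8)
The implications $(1)\Rightarrow(2)\Rightarrow(3)$ are formal: a Fr$\acute{e}$chet-Urysohn space is sequential and a sequential space is a $k$-space, so all the content lies in $(3)\Rightarrow(1)$. The plan is to route this through the characterization ``$C_p(X)$ is Fr$\acute{e}$chet-Urysohn if and only if $X$ satisfies the $\gamma$-property ${\Omega\choose\Gamma}$'' (every open $\omega$-cover of $X$ contains a $\gamma$-cover). Concretely I would prove (a) $X\models{\Omega\choose\Gamma}$ implies $C_p(X)$ Fr$\acute{e}$chet-Urysohn; (b) $C_p(X)$ Fr$\acute{e}$chet-Urysohn implies $X\models{\Omega\choose\Gamma}$; and (c) $C_p(X)$ a $k$-space implies $X\models{\Omega\choose\Gamma}$. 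Then $(3)\Rightarrow(1)$ follows from (c) and (a); (b) is not strictly needed for the cycle but supplies the construction reused in (c). Steps (a) and (b) are soft; step (c) is the heart of the matter.

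For (a) I would use that $C_p(X)$ is a topological group, hence homogeneous, to reduce to the Fr$\acute{e}$chet-Urysohn property at $\mathbf 0$. Given $A\subseteq C_p(X)$ with $\mathbf 0\in\overline A$, replacing each $f\in A$ by $\max(-1,\min(1,f))$ keeps $\mathbf 0$ in the closure and reflects convergence to $\mathbf 0$, so one may assume $A\subseteq C_p(X,[-1,1])$. For $k\in\omega$ the family $\mathcal U_k=\{\,f^{-1}((-1/k,1/k)):f\in A\,\}$ covers $X$ and, for all sufficiently large $k$ (the other case being trivial), is an open $\omega$-cover of $X$ --- a finite $F\subseteq X$ lies in a member because $\mathbf 0\in\overline A$. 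Using the classical fact that $X\models{\Omega\choose\Gamma}$ is equivalent to $X\models S_1(\Omega,\Gamma)$, I would pick $f_k\in A$ so that $\{\,f_k^{-1}((-1/k,1/k)):k\in\omega\,\}$ is a $\gamma$-cover; then every $x\in X$ satisfies $|f_k(x)|<1/k$ for all but finitely many $k$, whence $f_k\to\mathbf 0$. For (b), given an open $\omega$-cover $\mathcal U$, for each finite $F\subseteq X$ fix $U_F\in\mathcal U$ with $F\subseteq U_F$ and, by complete regularity, $f_F\in C_p(X,[0,1])$ with $f_F|_F\equiv 0$ and $f_F|_{X\setminus U_F}\equiv 1$; the set $A$ of all these $f_F$ then satisfies $\mathbf 0\in\overline A\setminus A$, so the Fr$\acute{e}$chet-Urysohn property yields a sequence $f_{F_n}\to\mathbf 0$ in $A$, and since $f_{F_n}(x)<1$ eventually forces $x\in U_{F_n}$, the sets $\{U_{F_n}:n\in\omega\}$ form a $\gamma$-cover contained in $\mathcal U$.

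For the decisive step (c) the plan is to feed the set $A$ from (b) into the $k$-space hypothesis instead of into a Fr$\acute{e}$chet-Urysohn hypothesis. Since $\mathbf 0\in\overline A\setminus A$, the set $A$ is not closed, so there is a compact $K\subseteq C_p(X)$ for which $A\cap K$ is not closed in $K$; the goal is to extract from this a sequence in $A$ converging to $\mathbf 0$, after which the translation in (b) delivers the $\gamma$-subcover of $\mathcal U$ and the proof is finished. This extraction is where the real work lies: one must show that the compact witness $K$ can be arranged so that $\mathbf 0\in\overline{A\cap K}$ --- rather than merely some other point of $\overline A\setminus A$, through which the translation to $\gamma$-covers would break down --- and that $K$ is then Fr$\acute{e}$chet-Urysohn, so that a sequence from $A\cap K$ genuinely converges to $\mathbf 0$. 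Along the way one also establishes that a $k$-space $C_p(X)$ must have countable tightness, equivalently that all finite powers of $X$ are Lindel\"of.

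The main obstacle is precisely this extraction. A compact subspace of a space of the form $C_p(X)$ can, up to homeomorphism, be an arbitrary compactum, so compactness of $K$ alone produces nothing; it is only by exploiting the $k$-space hypothesis repeatedly that one forces the countable tightness of $C_p(X)$ and the Fr$\acute{e}$chet-Urysohn property of the compacta that occur. This is the part of the theorem genuinely special to function spaces: for an arbitrary topological group, ``$k$-space'' does not imply ``Fr$\acute{e}$chet-Urysohn'' --- the free topological group on a convergent sequence is sequential yet not Fr$\acute{e}$chet-Urysohn --- so the argument must use structure peculiar to $C_p(X)$, and it is here that the bulk of the work in \cite{py,ger1} lies. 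Once this is carried out, the cycle $(1)\Rightarrow(2)\Rightarrow(3)\Rightarrow(1)$ is complete.
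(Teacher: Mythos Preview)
The paper does not prove this theorem at all: it is stated with attribution to Pytkeev and Gerlits (and pointers to Arhangel'skii's book and McCoy--Ntantu), and no proof is given. So there is no ``paper's own proof'' to compare your proposal against.

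As a standalone sketch, your plan is the right shape. The implications $(1)\Rightarrow(2)\Rightarrow(3)$ are indeed formal, and routing $(3)\Rightarrow(1)$ through the $\gamma$-property ${\Omega\choose\Gamma}$ is the standard strategy. Your steps (a) and (b) are the Gerlits--Nagy argument and are fine. For step (c) you correctly identify both the opening move (feed the set $A$ from (b) into the $k$-space hypothesis) and the two genuine obstacles: that the compact $K$ witnessing non-closedness of $A\cap K$ might detect a limit point of $A$ other than $\mathbf 0$, and that $K$ is not a priori Fr\'echet--Urysohn. You also correctly note that countable tightness of $C_p(X)$ (equivalently, Lindel\"ofness of all finite powers of $X$) must be established along the way. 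But you do not actually resolve either obstacle; you explicitly defer the extraction argument to \cite{py,ger1}. So what you have written is an accurate roadmap of the Pytkeev--Gerlits proof rather than a proof.

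One concrete point worth flagging: with the set $A=\{f_F\}$ as you build it in (b), the closure $\overline A$ is in general much larger than $A\cup\{\mathbf 0\}$, since the functions $f_F$ are unconstrained on $U_F\setminus F$. Hence the compact $K$ produced by the $k$-space hypothesis could well witness a limit point unrelated to $\mathbf 0$, and the translation back to a $\gamma$-subcover would break down exactly as you fear. The published proofs get around this either by a more careful choice of the test set or by first establishing the intermediate structural facts (countable tightness, then Fr\'echet--Urysohnness of the relevant compacta); filling this in is the real work, and your proposal leaves it open.
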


 Gerlits and Nagy \cite{gn} proved

\begin{theorem}(Gerlits, Nagy) \label{th7} For a space $X$, the following statements are
equivalent:

\begin{enumerate}

\item $C_p(X)$ $\models$ $S_{1}(\Omega_x, \Gamma_x)$;

\item $C_p(X)$ is Fr$\acute{e}$chet-Urysohn;

\item $X$ $\models$ $S_{1}(\Omega, \Gamma)$;

\item $X$ $\models$ ${\Omega\choose\Gamma}$.

\end{enumerate}

\end{theorem}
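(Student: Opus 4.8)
The plan is to prove the cycle of implications $(1)\Rightarrow(2)\Rightarrow(4)\Rightarrow(3)\Rightarrow(1)$; since $(3)\Rightarrow(4)$ is immediate (apply $S_{1}(\Omega,\Gamma)$ to the constant sequence $\mathcal{U}_{n}=\mathcal{U}$), this establishes the equivalence of all four conditions. The implication $(1)\Rightarrow(2)$ is also purely formal: for $A\in\Omega_{f}$, applying $S_{1}(\Omega_{f},\Gamma_{f})$ to the constant sequence $A_{n}\equiv A$ yields a sequence in $A$ converging to $f$, so $C_{p}(X)$ is Fr\'echet--Urysohn at every point. The real content is in the remaining implications, all of which rest on the standard duality between open $\omega$-covers of $X$ and the local structure of $C_{p}(X)$ at the point $\mathbf{0}$.

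For $(3)\Rightarrow(1)$, since $C_{p}(X)$ is a topological group it suffices, by homogeneity, to establish $S_{1}(\Omega_{\mathbf{0}},\Gamma_{\mathbf{0}})$. Given $A_{n}\in\Omega_{\mathbf{0}}$ for $n\in\omega$, I would attach to each $f\in A_{n}$ the cozero set $V^{n}_{f}=\{x\in X:|f(x)|<1/n\}$ (disregarding the routinely handled degenerate case $V^{n}_{f}=X$); since $\mathbf{0}\in\overline{A_{n}}$, the family $\mathcal{U}_{n}=\{V^{n}_{f}:f\in A_{n}\}$ is an open $\omega$-cover of $X$. Applying $S_{1}(\Omega,\Gamma)$ to $\langle\mathcal{U}_{n}\rangle_{n}$ produces $f_{n}\in A_{n}$ with $\{V^{n}_{f_{n}}:n\in\omega\}\in\Gamma$, and then for each $x\in X$ one has $|f_{n}(x)|<1/n$ for all but finitely many $n$, i.e.\ $f_{n}\to\mathbf{0}$; a pigeonhole argument (using $X\notin\mathcal{U}_{n}$) shows $\{f_{n}:n\in\omega\}$ is infinite and omits $\mathbf{0}$, so it lies in $\Gamma_{\mathbf{0}}$.

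For $(2)\Rightarrow(4)$, let $\mathcal{U}$ be an open $\omega$-cover of $X$. For each finite $F\subseteq X$ I would fix $U_{F}\in\mathcal{U}$ with $F\subseteq U_{F}$ (necessarily $U_{F}\neq X$) and, by complete regularity, a continuous $g_{F}:X\to[0,1]$ with $g_{F}\equiv 0$ on $F$ and $g_{F}\equiv 1$ on $X\setminus U_{F}$. Then $A=\{g_{F}:F\in[X]^{<\omega}\}$ satisfies $\mathbf{0}\in\overline{A}$ (each basic neighbourhood $<\mathbf{0},F,\epsilon>$ of $\mathbf{0}$ contains $g_{F}$) while $\mathbf{0}\notin A$. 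A sequence $g_{F_{n}}\to\mathbf{0}$, provided by Fr\'echet--Urysohnness, forces, for each $x\in X$, $x\in U_{F_{n}}$ for all but finitely many $n$; hence $\{U_{F_{n}}:n\in\omega\}$ is an infinite (again by pigeonhole) subfamily of $\mathcal{U}$ which is a $\gamma$-cover, that is, $X\models{\Omega\choose\Gamma}$.

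The implication $(4)\Rightarrow(3)$ is where I expect the genuine difficulty to lie: it is the purely combinatorial ``selective upgrade'' ${\Omega\choose\Gamma}\Rightarrow S_{1}(\Omega,\Gamma)$ on the side of covers. I would first reduce to countable $\omega$-covers --- by ${\Omega\choose\Gamma}$ each $\mathcal{U}_{n}$ has a $\gamma$-subcover, every countably infinite subfamily of which is again a $\gamma$-cover, hence an $\omega$-cover contained in $\mathcal{U}_{n}$, and a selection from such a subfamily is a selection from $\mathcal{U}_{n}$ --- so one may assume $\mathcal{U}_{n}=\{U^{n}_{k}:k\in\omega\}$. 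Then one forms the ``master'' $\omega$-cover $\{W_{s}:s\in\omega^{<\omega}\setminus\{\emptyset\}\}$, where $W_{s}=\bigcap_{i<|s|}U^{i}_{s(i)}$, applies ${\Omega\choose\Gamma}$ to it to get a countable $\gamma$-subcover $\{W_{s_{j}}:j\in\omega\}$, and attempts to read off a selection $U_{n}\in\mathcal{U}_{n}$ from the finite sequences $s_{j}$. The delicate point --- and, I expect, the crux of the theorem --- is that the lengths $|s_{j}|$ need not be unbounded, so this $\gamma$-subcover may fail to involve every coordinate $n$; overcoming this requires more careful bookkeeping (a tree/fusion argument along a decreasing sequence of common refinements of the $\mathcal{U}_{n}$, with repeated use of ${\Omega\choose\Gamma}$). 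For the details of this classical step I would follow Gerlits and Nagy \cite{gn} (see also \cite{jmss}). Together with the trivial implications above, this completes the cycle and hence the proof.
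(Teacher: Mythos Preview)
The paper does not supply a proof of this theorem: it is quoted as a classical result of Gerlits and Nagy with the citation \cite{gn}, and is used as a black box throughout (e.g.\ in the proofs of Theorems~\ref{th71}, \ref{th44} and \ref{th1}). So there is no ``paper's own proof'' to compare against.

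That said, your outline is the standard one and is correct. The implications $(1)\Rightarrow(2)$ and $(3)\Rightarrow(4)$ are indeed formal; your arguments for $(3)\Rightarrow(1)$ and $(2)\Rightarrow(4)$ via the cozero/zero-set duality between $\omega$-covers of $X$ and neighbourhoods of $\mathbf{0}$ in $C_{p}(X)$ are exactly the classical ones and match the style of the analogous direct arguments the paper gives elsewhere (compare the proofs of Theorem~\ref{th71} and Theorem~\ref{th44}). You correctly isolate $(4)\Rightarrow(3)$ as the substantive combinatorial step, correctly identify the obstruction (the lengths $|s_{j}|$ in the naive master-cover approach may be bounded), and defer to \cite{gn} and \cite{jmss} for the fusion argument --- which is precisely what the paper itself does for the whole theorem. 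Nothing is missing.
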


\begin{theorem} \label{th71} For a space $X$, the following statements are
equivalent:

\begin{enumerate}

\item $C_p(X)$ is strongly sequentially dense in itself;

\item $X$ $\models$ $S_{1}(\Omega, \Gamma)$.

\end{enumerate}

\end{theorem}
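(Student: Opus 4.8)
The plan is to prove the equivalence by connecting both statements to the Gerlits–Nagy theorem (Theorem~\ref{th7}), specifically to the property $C_p(X)\models S_1(\Omega_x,\Gamma_x)$, i.e., $C_p(X)$ being Fréchet–Urysohn. For the implication (2)$\Rightarrow$(1), assume $X\models S_1(\Omega,\Gamma)$. Then by Theorem~\ref{th7}, $C_p(X)$ is Fréchet–Urysohn, and moreover every point of $C_p(X)$ realizes $S_1(\Omega_x,\Gamma_x)$. Now let $D$ be an arbitrary dense subset of $C_p(X)$ and let $f\in C_p(X)$; I want to produce a sequence in $D$ converging to $f$. Since $D$ is dense and $C_p(X)$ is homogeneous (it is a topological group), it suffices to handle $f=\mathbf{0}$, and then $D\in\Omega_{\mathbf 0}$ (or $\mathbf 0\in D$, in which case use $D\setminus\{\mathbf 0\}$, still dense). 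Applying $S_1(\Omega_{\mathbf 0},\Gamma_{\mathbf 0})$ to the constant sequence $D,D,D,\dots$ (each term is an element of $\Omega_{\mathbf 0}$) yields points $g_n\in D$ with $\mathbf 0=\lim\{g_n\}$. Hence $D$ is sequentially dense, so $C_p(X)$ is strongly sequentially dense in itself.

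For the converse (1)$\Rightarrow$(2), assume $C_p(X)$ is strongly sequentially dense in itself. The goal is to show $C_p(X)$ is Fréchet–Urysohn and then invoke Theorem~\ref{th7}. The standard trick: suppose $f\in\overline{A}$ for some $A\subseteq C_p(X)$. Using translations in the topological group $C_p(X)$, reduce to $f=\mathbf 0$. The obstacle is that $A$ itself need not be dense, so I cannot directly apply the hypothesis to $A$. The fix is to enlarge $A$: choose a countable dense subset of $C_p(X)$ — wait, $C_p(X)$ need not be separable — so instead, consider a dense set $D_A$ built from $A$. Concretely, since $\mathbf 0\in\overline A$, the set $A$ accumulates at $\mathbf 0$; one wants to form a dense set $D$ that "uses up" $A$ near $\mathbf 0$ in such a way that any sequence from $D$ converging to $\mathbf 0$ is eventually (or frequently) inside $A$. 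A clean way is: if $A$ already witnesses failure of Fréchet–Urysohn, then $\mathbf 0\notin[A]_{seq}$; take any dense $D_0$ of $C_p(X)$ with $D_0\cap\langle\mathbf 0,F,\varepsilon\rangle\subseteq A$ for appropriately chosen basic neighborhoods — but density of $D_0$ across all of $C_p(X)$ combined with $A$ only near $\mathbf 0$ is exactly what makes $D:=A\cup(D_0\setminus W)$ dense, where $W$ is a suitable neighborhood of $\mathbf 0$ with $A$ dense in $W$; any sequence in $D$ converging to $\mathbf 0$ must eventually lie in $W$, hence in $A$, contradicting $\mathbf 0\notin[A]_{seq}$.

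The main obstacle is precisely this second implication: manufacturing from an arbitrary accumulating set $A$ a genuinely dense set $D$ so that convergent-to-$\mathbf 0$ sequences inside $D$ are forced into $A$. The key points are (i) that $A$ accumulating at $\mathbf 0$ means $A$ is dense in some open neighborhood $W$ of $\mathbf 0$ — more carefully, $\overline{A}$ has $\mathbf 0$ in its interior relative to $\overline A$, so one picks an open $W\ni\mathbf 0$ with $W\subseteq\overline A$; (ii) then $D:=A\cup(D_0\setminus W)$ is dense in $C_p(X)$ for any dense $D_0$, since $D$ is dense in $W$ (as $A$ is) and dense in $C_p(X)\setminus\overline{W}$ (as $D_0$ is), and dense sets of the two pieces glue; (iii) a sequence $\{g_n\}\subseteq D$ with $g_n\to\mathbf 0$ is eventually in $W$, hence eventually in $A$, so $\mathbf 0\in[A]_{seq}$. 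Applying hypothesis (1) to $D$ gives such a sequence and completes the proof that $C_p(X)$ is Fréchet–Urysohn; Theorem~\ref{th7} then yields $X\models S_1(\Omega,\Gamma)$. I expect the delicate step to be verifying that one can always choose the open set $W$ as in (i) — this uses that $C_p(X)$ is a homogeneous space in which "$\mathbf 0\in\overline A$" can be localized — together with checking the gluing in (ii) at boundary points of $W$, where one uses that $\overline{D_0\setminus W}\supseteq\overline{C_p(X)\setminus\overline W}$ reaches the boundary, and $\overline A\supseteq W\supseteq\partial W\cap W$, so $\overline D=C_p(X)$.
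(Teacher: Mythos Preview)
Your direction for $(2)\Rightarrow(1)$ is correct and matches the paper's: once $C_p(X)$ is Fr\'echet--Urysohn (via Theorem~\ref{th7}), every dense set is sequentially dense.

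For $(1)\Rightarrow(2)$ there is a genuine gap. The claim in step~(i) --- that from $\mathbf{0}\in\overline{A}$ one can find an open $W\ni\mathbf{0}$ with $W\subseteq\overline{A}$ --- is simply false: $\mathbf{0}\in\overline{A}$ says only that every neighborhood of $\mathbf{0}$ meets $A$, not that $A$ is dense in any such neighborhood. Already in $C_p(\{\mathrm{pt}\})\cong\mathbb{R}$ the set $A=\{1/n:n\in\omega\}$ has $0\in\overline{A}$ while $\overline{A}$ has empty interior. Without such a $W$ the set $D=A\cup(D_0\setminus W)$ cannot be formed, and nothing forces a convergent-to-$\mathbf{0}$ sequence chosen from a dense enlargement of $A$ to live in $A$. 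So the attempt to deduce Fr\'echet--Urysohn directly from ``strongly sequentially dense in itself'' by this localization trick breaks down; you yourself flagged step~(i) as delicate, and indeed it is the step that fails.

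The paper bypasses Fr\'echet--Urysohn entirely and proves $X\models\binom{\Omega}{\Gamma}$ (which equals $S_1(\Omega,\Gamma)$ by Theorem~\ref{th7}) directly. Given an $\omega$-cover $\mathcal{U}$ and any dense $P\subseteq C_p(X)$, it builds a dense set $\mathcal{D}$ of continuous functions $f$ satisfying $f\upharpoonright(X\setminus U)=1$ for some $U\in\mathcal{U}$ and agreeing with some $h\in P$ on a finite $K\subset U$; density of $\mathcal{D}$ comes for free from $\mathcal{U}$ being an $\omega$-cover. By hypothesis $\mathcal{D}$ is sequentially dense, so some sequence $\{f_i\}\subseteq\mathcal{D}$ converges to $\mathbf{0}$; the associated $U_i\in\mathcal{U}$ then form a $\gamma$-cover, since for any finite $K\subseteq X$ eventually $f_i(K)\subseteq(-1,1)$, forcing $K\subseteq U_i$. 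The key idea you are missing is to encode the cover into a dense set of functions, rather than trying to promote an arbitrary accumulating set to a dense one.
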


\begin{proof} $(1)\Rightarrow(2)$. By Theorem \ref{th7}, $S_{1}(\Omega,
\Gamma)={\Omega\choose\Gamma}$.
 Let $\mathcal{U}\in
\Omega$ and $P$ be a dense subset of $C_p(X)$. A set $\mathcal{D}
:=\{ f\in C(X) : f\upharpoonright K=h$ for $h\in P$, and
$f\upharpoonright (X\setminus U)=1$ for a finite subset $K\subset
U$ where $U\in \mathcal{U} \}$.

Since $\mathcal{U}$ is a $\omega$-cover of $X$ and $P$ is a dense
subset of $C_p(X)$, we claim that $\mathcal{D}$ is a dense subset
of $C_p(X)$.

Fix $g\in C(X)$. Let $K$ be a finite subset of $X$, $\epsilon>0$
and $W=<g, K,\epsilon>$ be a base neighborhood of $g$, then there
is $U\in \mathcal{U}$ such that $K\subset U$ and $h\in W$ for some
$h\in P$. Since $f\upharpoonright K= h\upharpoonright K$ for some
$f\in \mathcal{D}$, then $f\in W$. By (1), $\mathcal{D}$ is a
sequentially dense subset of $C_p(X)$. Then there exists a
sequence $\{f_{i}\}_{i\in \omega}$ such that for each $i$,
$f_{i}\in \mathcal{D}$, and $\{f_{i}\}_{i\in\omega}$ converge to
$\bf{0}$. By definition of $f_i$, $f_i\upharpoonright K_i=h_{i}$
for some finite set $K_i$ and $h_{i}\in P$, and
$f_i\upharpoonright (X\setminus U_i)=1$ for some $U_i\in
\mathcal{U}$.

Consider a sequence $\{U_{i}\}_{i\in \omega}$.

(a). $U_{i}\in \mathcal{U}$.

(b). $\{U_{i}: i\in \omega\}$ is a $\gamma$-cover of $X$.

Let $K$ be a finite subset of $X$ and $W=[K,(-1,1)]$ be a base
neighborhood of $\bf{0}$, then there is $i'\in \omega$ such that
$f_{i}\in W$ for each $i>i'$. It follows that $K\subset U_i$ for
each $i>i'$. We thus get that $X$ $\models$
${\Omega\choose\Gamma}$, and, hence, $X$ $\models$ $S_{1}(\Omega,
\Gamma)$.

$(2)\Rightarrow(1)$. By Theorem \ref{th7}, $C_p(X)$ is
Fr$\acute{e}$chet-Urysohn, and, hence, $C_p(X)$ is strongly
sequentially dense in itself.
\end{proof}

\begin{corollary} For a   space $X$, the following statements are
equivalent:

\begin{enumerate}

\item $C_p(X)$ $\models$ $S_{1}(\Omega_x, \Gamma_x)$;

\item $C_p(X)$ is Fr$\acute{e}$chet-Urysohn;

\item $X$ $\models$ $S_{1}(\Omega, \Gamma)$;

\item $X$ $\models$ ${\Omega\choose\Gamma}$;

\item $C_p(X)$ is sequential;

\item $C_p(X)$ is a $k$-space;

\item $C_p(X)$ is strongly sequentially dense in itself.

\end{enumerate}

\end{corollary}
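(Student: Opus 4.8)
The plan is to assemble the corollary entirely from the three theorems already established in the excerpt, observing that the seven listed conditions partition into three overlapping groups whose equivalences are each furnished by a single prior result. First I would note that conditions (1)--(4) are, verbatim, the four conditions of the Gerlits--Nagy result (Theorem~\ref{th7}), so their mutual equivalence is immediate. Next, conditions (2), (5) and (6)---namely $C_p(X)$ being Fr\'echet--Urysohn, sequential, and a $k$-space---are exactly the three conditions of the Pytkeev--Gerlits result (Theorem~\ref{th80}), so these three are equivalent as well. Finally, condition (7), that $C_p(X)$ is strongly sequentially dense in itself, is equivalent to condition (3), $X\models S_{1}(\Omega,\Gamma)$, by Theorem~\ref{th71}.

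The key structural point---and really the only thing the proof must articulate---is that these three equivalence chains share common links and therefore fuse into one equivalence class. Condition (2) (Fr\'echet--Urysohn) lies simultaneously in Theorem~\ref{th7} and Theorem~\ref{th80}, so the block $\{(1),(2),(3),(4)\}$ and the block $\{(2),(5),(6)\}$ are welded together at (2). Condition (3) (the statement $X\models S_{1}(\Omega,\Gamma)$) lies simultaneously in Theorem~\ref{th7} and Theorem~\ref{th71}, so condition (7) is attached through (3). Hence every one of the seven conditions is equivalent to, say, condition (2), and the full seven-way equivalence follows by transitivity.

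I would write the argument as three short citations followed by the transitivity observation, displaying the logical skeleton as $(5)\Leftrightarrow(6)\Leftrightarrow(2)\Leftrightarrow(1)\Leftrightarrow(3)\Leftrightarrow(4)$ together with $(3)\Leftrightarrow(7)$. I do not expect any genuine obstacle here, since no new topological construction is required; the content of the corollary is precisely the merging of the Gerlits--Nagy theorem, the Pytkeev--Gerlits theorem, and Theorem~\ref{th71}. The only point demanding care is to verify that the shared conditions are stated identically across the cited theorems (the phrasing of ``Fr\'echet--Urysohn'' in Theorems~\ref{th7} and~\ref{th80}, and of ``$X\models S_{1}(\Omega,\Gamma)$'' in Theorems~\ref{th7} and~\ref{th71}), which is immediate from their statements. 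Thus the ``hard part,'' such as it is, amounts only to correctly identifying the two pivot conditions (2) and (3) through which the three equivalence classes are glued.
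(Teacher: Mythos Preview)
Your proposal is correct and matches the paper's approach: the corollary is stated there without proof, as an immediate consequence of Theorems~\ref{th80}, \ref{th7}, and~\ref{th71}, which is precisely the assembly you describe. Your identification of conditions~(2) and~(3) as the pivot points linking the three blocks is exactly what makes the corollary follow.
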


Note that $S_{1}(\Omega, \Gamma)=S_{fin}(\Omega, \Gamma)$ (see
\cite{jmss}).

\begin{theorem}\label{th44} For a   space $X$, the following statements are
equivalent:

\begin{enumerate}

\item $X$ $\models$ $S_{fin}(\Omega, \Gamma)$;

\item $C_p(X)$ $\models$ $S_{fin}(\Omega_x, \Gamma_x)$.

\end{enumerate}

\end{theorem}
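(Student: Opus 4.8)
The plan is to route both implications through the statement ``$C_p(X)$ is Fr\'echet--Urysohn,'' exploiting the Gerlits--Nagy theorem (Theorem~\ref{th7}) together with the equality $S_{fin}(\Omega,\Gamma)=S_1(\Omega,\Gamma)$ recorded just above the statement. With these tools in hand, almost all of the content is already available, and what remains is one elementary observation about $S_{fin}(\Omega_x,\Gamma_x)$.

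For $(1)\Rightarrow(2)$ I would argue as follows. By the equality $S_{fin}(\Omega,\Gamma)=S_1(\Omega,\Gamma)$, hypothesis (1) says $X\models S_1(\Omega,\Gamma)$; Theorem~\ref{th7} then gives that $C_p(X)$ is Fr\'echet--Urysohn and hence, again by Theorem~\ref{th7}, that $C_p(X)\models S_1(\Omega_x,\Gamma_x)$. The implication $S_1(\Omega_x,\Gamma_x)\Rightarrow S_{fin}(\Omega_x,\Gamma_x)$ from the diagram of implications above yields~(2).

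For $(2)\Rightarrow(1)$ the key step is the claim: \emph{if $C_p(X)\models S_{fin}(\Omega_x,\Gamma_x)$ then $C_p(X)$ is Fr\'echet--Urysohn.} Granting this, Theorem~\ref{th7} gives $X\models S_1(\Omega,\Gamma)=S_{fin}(\Omega,\Gamma)$, which is~(1). To prove the claim, take $A\subseteq C_p(X)$ and $f\in\overline{A}$. If $f$ is isolated in $A$ the constant sequence at $f$ works; otherwise $A':=A\setminus\{f\}\in\Omega_f$, and feeding the constant sequence $A_n:=A'$ $(n\in\omega)$ into $S_{fin}(\Omega_f,\Gamma_f)$ produces finite sets $B_n\subseteq A'$ with $\bigcup_{n\in\omega}B_n\in\Gamma_f$; by the definition of $\Gamma_f$ this union is an infinite subset of $A$ converging to $f$, hence an enumerating sequence in $A$ converging to $f$. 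The only delicate point here is bookkeeping with the conventions of Section~2 (members of $\Gamma_x$ are infinite sets not containing $x$); no special structure of $C_p(X)$ is used for the claim itself.

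An alternative, more hands-on route for $(2)\Rightarrow(1)$ mirrors the proof of Theorem~\ref{th71}: given open $\omega$-covers $\mathcal{U}_n$ of $X$, set $A_n:=\{f\in C(X):0\le f\le 1,\ f\upharpoonright(X\setminus U)=1\ \text{for some}\ U\in\mathcal{U}_n\}$; Tychonoff separation of a finite set from a disjoint closed set shows $\mathbf{0}\in\overline{A_n}\setminus A_n$, so $A_n\in\Omega_{\mathbf{0}}$. Applying $S_{fin}(\Omega_{\mathbf{0}},\Gamma_{\mathbf{0}})$ gives finite $B_n\subseteq A_n$ with $\bigcup_{n\in\omega}B_n$ converging to $\mathbf{0}$, and choosing for each $f\in B_n$ a witnessing $U_f\in\mathcal{U}_n$ produces finite $\mathcal{F}_n\subseteq\mathcal{U}_n$. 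I expect the main obstacle in this route to be checking that $\bigcup_{n\in\omega}\mathcal{F}_n$ is a $\gamma$-cover --- that it is infinite, and that every $x\in X$ lies in all but finitely many of its \emph{distinct} members --- which one obtains by testing the convergence $\bigcup_{n\in\omega}B_n\to\mathbf{0}$ against the neighbourhoods $\{g:|g(x)|<1\}$ of $\mathbf{0}$ and using that each $f$ equals $1$ off $U_f$.
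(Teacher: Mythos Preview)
Your proposal is correct. For $(2)\Rightarrow(1)$ your primary route is genuinely different from the paper's: you observe that $S_{fin}(\Omega_x,\Gamma_x)$, applied to a constant sequence, yields ${\Omega_x\choose\Gamma_x}$, which is precisely the Fr\'echet--Urysohn property, and then invoke Theorem~\ref{th7}. The paper instead argues directly: given $\omega$-covers $\mathcal{U}_n$ it builds $A_n=\{f\in C(X):f\upharpoonright(X\setminus U)=0$ for some $U\in\mathcal{U}_n\}\in\Omega_{\mathbf 1}$, applies $S_{fin}(\Omega_{\mathbf 1},\Gamma_{\mathbf 1})$ to get finite $B_n\subseteq A_n$ with $\bigcup_n B_n\to\mathbf 1$, and then thins to one function $f_n^1$ per index (infinite subsets of members of $\Gamma_x$ remain in $\Gamma_x$) to produce a $\gamma$-cover $\{U_n\}$ --- essentially your alternative hands-on route, with the extra thinning step that sidesteps your stated worry about verifying that the full union $\bigcup_n\mathcal{F}_n$ is a $\gamma$-cover. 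Your soft argument has the merit of making clear that the equivalence is a formal consequence of Gerlits--Nagy together with the general implication $S_{fin}(\mathcal P,\mathcal Q)\Rightarrow{\mathcal P\choose\mathcal Q}$, requiring no $C_p$-specific computation; the paper's direct construction is self-contained and exhibits the selected $\gamma$-cover explicitly.
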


\begin{proof} By Theorem \ref{th7}, it suffices to prove $(2)\Rightarrow(1)$.

$(2)\Rightarrow(1)$. Let $\{\mathcal{U}_n\}_{n\in \omega}$ be a
sequence of open $\omega$-covers of $X$. We set $A_n=\{f\in C(X):
f\upharpoonright (X\setminus U)=0$ for $U\in \mathcal{U}_n \}$. It
is not difficult to see that each $A_n$ is dense in $C(X)$ since
each $\mathcal{U}_n$ is an $\omega$-cover of $X$ and $X$ is
Tychonoff. Let $f$ be the constant function to 1. By the
assumption there exist $\{f^i_n : i=1,...,k(n)\}\subset A_n$ such
that $\bigcup_{n\in \omega} \{f^i_n\}_{i=1}^{k(n)}$ converge to
$f$. Consider subsequence $\{f^1_n\}_{n\in \omega}\subset
\bigcup_{n\in \omega} \{f^i_n\}_{i=1}^{k(n)}$. Note that
$\{f^1_n\}_{n\in \omega}$ also converge to $f$.

 For each $f^1_n$ we
take $U_n\in \mathcal{U}_n$ such that
$f^1_n\upharpoonright(X\setminus U_n)=0$.

 Set $\mathcal{U}=\{ U_n : n\in \omega\}$. For each finite subset
$\{x_1,...,x_k\}$ of $X$ we consider the basic open neighborhood
of $f$ $[x_1,...,x_k; W,..., W]$, where $W=(0,2)$.

 Note that there is $n'\in \omega$ such that
$[x_1,...,x_k; W,..., W]$ contains $f^1_n$ for $n>n'$. This means
$\{x_1,...,x_k\}\subset U_n$ for $n>n'$. Consequently
$\mathcal{U}$ is an $\gamma$-cover of $X$.

\end{proof}

\begin{theorem}\label{th1} For a space $X$, the following statements are
equivalent:

\begin{enumerate}

\item $C_p(X)$ $\models$ $S_{1}(\mathcal{D},\mathcal{S})$;

\item $C_p(X)$ is strongly sequentially dense in itself and is
separable;

 \item $X$ $\models$ $S_{1}(\Omega, \Gamma)$ and
$iw(X)=\aleph_0$;

\item $C_p(X)$ $\models$ $S_{1}(\Omega_x,\Gamma_x)$ and is
separable;

\item $C_p(X)$ $\models$ $S_{1}(\mathcal{D},\Gamma_x)$ and is
separable;

\item $C_p(X)$ $\models$ $S_{fin}(\mathcal{D},\mathcal{S})$;

\item $X$ $\models$ $S_{fin}(\Omega, \Gamma)$ and
$iw(X)=\aleph_0$;

\item $C_p(X)$ $\models$ $S_{fin}(\Omega_x, \Gamma_x)$ and is
separable;

\item $C_p(X)$ $\models$ $S_{fin}(\mathcal{D}, \Gamma_x)$ and is
separable.

\end{enumerate}

\end{theorem}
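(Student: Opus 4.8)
The plan is to establish the cycle of implications by grouping the nine conditions according to whether they involve the $S_1$ or $S_{fin}$ versions, and then bridging the two blocks. The key structural fact is that, by Theorem~\ref{th7} and Theorem~\ref{th44}, the conditions $S_1(\Omega,\Gamma)$, $S_{fin}(\Omega,\Gamma)$, $S_1(\Omega_x,\Gamma_x)$, $S_{fin}(\Omega_x,\Gamma_x)$, and ``$C_p(X)$ strongly sequentially dense in itself'' are all mutually equivalent (the last one via Theorem~\ref{th71}), independently of separability. So conditions (2), (3), (4), (5), (7), (8), (9) differ only cosmetically: each is one of these equivalent ``Fréchet-type'' properties conjoined with separability of $C_p(X)$, and by Noble's Theorem~\ref{th31} separability of $C_p(X)$ is exactly $iw(X)=\aleph_0$. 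Thus (2)$\Leftrightarrow$(3)$\Leftrightarrow$(4)$\Leftrightarrow$(5)$\Leftrightarrow$(7)$\Leftrightarrow$(8)$\Leftrightarrow$(9) is essentially bookkeeping built on the quoted theorems.

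The real content is tying these to the two selection principles (1) $S_1(\mathcal{D},\mathcal{S})$ and (6) $S_{fin}(\mathcal{D},\mathcal{S})$. First I would prove (1)$\Rightarrow$(6), which is immediate since any single-element selection is a finite selection. Next, (6)$\Rightarrow$(2): if $C_p(X)\models S_{fin}(\mathcal{D},\mathcal{S})$ then taking $A_n=D$ for all $n$, where $D$ is any fixed dense set, the resulting $\bigcup_n B_n$ is a countable sequentially dense subset of $C_p(X)$, so $C_p(X)$ is sequentially separable, hence separable; moreover applying $S_{fin}(\mathcal{D},\mathcal{S})$ with the constant sequence at an arbitrary dense set $D$ shows every dense set has a countable sequentially dense subset — but to get ``strongly sequentially dense in itself'' I should instead argue that $S_{fin}(\mathcal{D},\mathcal{S})$ forces $C_p(X)$ to be Fréchet–Urysohn: indeed $S_{fin}(\mathcal{D},\mathcal{S})$ implies $S_{fin}(\mathcal{D},\Gamma_x)$ along the lines of the proof of Theorem~\ref{th44}, since from a point $f\in\overline{A}$ with $A$ dense one can run the finite selection from the dense sets $A_n=A$ and extract a $\gamma$-convergent (to $f$) subsequence out of the union by the usual diagonal/pigeonhole trick used in Theorems~\ref{th71} and \ref{th44}. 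That lands us in (9), which we already know equals (2).

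The main work — and the step I expect to be the genuine obstacle — is the reverse direction (2)$\Rightarrow$(1), i.e. producing a one-element-per-set selection landing in a sequentially dense set. Here the plan mirrors the proof of Theorem~\ref{th71}: given dense sets $P_n\subseteq C_p(X)$ and given that $X\models S_1(\Omega,\Gamma)$ with $iw(X)=\aleph_0$, fix a countable dense $Q\subseteq C_p(X)$ (exists by separability) with an enumeration, and for each $n$ build from $P_n$ a new dense family of functions that agree with a chosen element of $Q$ (or of a chosen target dense set) on a finite set and equal $1$ off a member of an $\omega$-cover; then the $S_1(\Omega,\Gamma)$-selection on $X$ yields a $\gamma$-cover $\{U_n\}$, and the corresponding chosen functions $f_n\in P_n$ converge to $\mathbf 0$ by the argument that $[K,(-1,1)]$ eventually contains $f_n$ because $K\subseteq U_n$ eventually. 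The subtlety is that we must hit a \emph{prescribed} dense set (to certify sequential density of the selection, since sequential denseness must be witnessed toward every point, not just $\mathbf 0$); handling arbitrary target points rather than $\mathbf 0$ requires translating by elements of the countable dense set $Q$ and interleaving, so that the union of the selected functions is sequentially dense. I would carry this out by first reducing, via translation and the group structure of $C_p(X)$, to convergence at $\mathbf 0$, then applying the $\Omega\choose\Gamma$ machinery exactly as in Theorem~\ref{th71} but with $S_1$ in place of the diagonalization. Once (2)$\Rightarrow$(1) is done, the circle closes: (1)$\Rightarrow$(6)$\Rightarrow$(2)$\Rightarrow$(1), and all the intermediate conditions (3)--(5),(7)--(9) are pinned to (2) by the quoted theorems plus Noble's theorem.
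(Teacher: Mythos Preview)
Your bookkeeping for the equivalence of (2), (3), (4), (7), (8) is fine, and (1)$\Rightarrow$(6) is trivial. Two points deserve comment.

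First, for (6)$\Rightarrow$(2) your initial argument is already complete: applying $S_{fin}(\mathcal{D},\mathcal{S})$ to the constant sequence $A_n=D$ produces a countable sequentially dense subset of $D$, hence $D$ itself is sequentially dense. That is exactly what ``strongly sequentially dense in itself'' means. The subsequent detour through $S_{fin}(\mathcal{D},\Gamma_x)$ and a ``diagonal/pigeonhole trick'' is unnecessary and vaguer than what you already had. (The same direct argument handles (5)$\Rightarrow$(2) and (9)$\Rightarrow$(2); these are not mere bookkeeping, since $\mathcal{D}\subsetneq\Omega_x$ and no quoted theorem gives (5)$\Rightarrow$(4) directly.)

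The genuine gap is in (2)$\Rightarrow$(1). Your plan is to mimic the construction in Theorem~\ref{th71}: from each $P_n$ build an auxiliary dense family of functions that equal $1$ off a member of an $\omega$-cover, then apply $S_1(\Omega,\Gamma)$. But that construction manufactures \emph{new} functions; it does not select elements of the given $P_n$. There is no evident $\omega$-cover attached to an arbitrary dense $P_n\subseteq C_p(X)$ from which $S_1(\Omega,\Gamma)$ could pick members of $P_n$, and your sketch does not explain how ``the corresponding chosen functions $f_n\in P_n$'' arise. The paper's argument sidesteps this entirely and is much shorter: since $X\models S_1(\Omega,\Gamma)$ one has $X\models S_1(\Omega,\Omega)$, so by Theorem~\ref{th11} $C_p(X)\models S_1(\mathcal{D},\mathcal{D})$; this already gives $d_n\in P_n$ with $\{d_n:n\in\omega\}$ dense, and then hypothesis (2) upgrades ``dense'' to ``sequentially dense'' for free. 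Factor through $R$-separability rather than trying to run the $\gamma$-cover machinery pointwise.
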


\begin{proof}
$(1)\Rightarrow(2)$. Let $D$ be a dense subset of $C_p(X)$. By
$S_{1}(\mathcal{D},\mathcal{S})$, for
 sequence $\{D_i : D_i=D$ and $i\in \omega \}$
there is a sequence $(d_{i}: i\in\omega)$ such that for each $i$,
$d_{i}\in D_{i}$, and $\{d_{i}: i\in\omega \}$ is a countable
sequentially dense subset of $C_p(X)$. It follows that $D$ is a
sequentially dense subset of $C_p(X)$.

 $(2)\Rightarrow(1)$. Let $\{D_i\}_{i\in \omega}$ be a sequence
 of dense subsets of $C_{p}(X)$. Since $X$ $\models$ $S_{1}(\Omega,
 \Gamma)$, then,  $X$ $\models$ $S_{1}(\Omega, \Omega)$ and, by
 Theorem \ref{th11}, $C_p(X)$ $\models$ $S_{1}(\mathcal{D},\mathcal{D})$. Then there is a sequence $\{d_{i}\}_{i\in\omega}$ such that for each $i$,
$d_{i}\in D_{i}$, and $\{d_{i}: i\in\omega \}$ is a countable
dense  subset of $C_p(X)$. By $(2)$, $\{d_{i}: i\in\omega \}$ is a
countable sequentially dense subset of $C_p(X)$, i.e. $\{d_{i}:
i\in\omega \}\in \mathcal{S}$.

$(2)\Rightarrow(3)$.  By Theorem \ref{th71} and Theorem
\ref{th31}.

$(3)\Leftrightarrow(4)$. By Theorem \ref{th7}.

$(4)\Rightarrow(5)$ is immediate.

$(5)\Rightarrow(2)$. Let $D\in \mathcal{D}$, $f\in C(X)$ and
$\{D_n\}_{n\in \omega}$ such that $D_n=D$ for each $n\in \omega$.
By $(5)$, there is a sequence $\{f_{n}\}_{n\in\omega}$ such that
for each $n$, $f_{n}\in D_{n}$, and $\{f_{n}\}_{n\in\omega}$
converge to $f$. It follows that $D$ is a sequentially dense
subset of $C_p(X)$.

$(7)\Leftrightarrow(8)$. By Theorem \ref{th44} and Theorem
\ref{th31}.

$(8)\Rightarrow(9)$ is immediate.

$(1)\Rightarrow(6)$ is immediate.

$(3)\Rightarrow(7)$ is immediate.

$(9)\Rightarrow(2)$ is proved similarly the implication
$(5)\Rightarrow(2)$.

$(6)\Rightarrow(2)$ is proved similarly the implication
$(1)\Rightarrow(2)$.

\end{proof}

\begin{corollary} For a separable metrizable space $X$, the
following are equivalent:

\begin{enumerate}

\item $C_p(X)$ $\models$ $S_{1}(\mathcal{D},\mathcal{S})$;

\item $C_p(X)$ is strongly sequentially dense in itself;

\item $C_p(X)$ is strongly sequentially separable;

\item $C_p(X)$ $\models$ $S_{1}(\Omega_x,\Gamma_x)$;

\item $C_p(X)$ $\models$ $S_{1}(\mathcal{D},\Gamma_x)$;

\item $X$ $\models$ $S_{1}(\Omega, \Gamma)$;

\item $C_p(X)$ $\models$ $S_{fin}(\mathcal{D},\mathcal{S})$;

\item $C_p(X)$ $\models$ $S_{fin}(\Omega_x,\Gamma_x)$;

\item $C_p(X)$ $\models$ $S_{fin}(\mathcal{D},\Gamma_x)$;

\item $X$ $\models$ $S_{fin}(\Omega, \Gamma)$.

\end{enumerate}

\end{corollary}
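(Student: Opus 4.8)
The plan is to read off all ten equivalences from Theorem~\ref{th1} together with Noble's Theorem~\ref{th31}; the only item not literally occurring in Theorem~\ref{th1} is (3), so the real work is to splice it into the chain.

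First I would observe that a separable metrizable space $X$ automatically satisfies $iw(X)=\aleph_0$, since the identity map is a condensation of $X$ onto the separable metric space $X$; hence by Noble's Theorem~\ref{th31} the space $C_p(X)$ is separable. Consequently, for such an $X$, every occurrence of the side conditions ``$C_p(X)$ is separable'' and ``$iw(X)=\aleph_0$'' in the statements of Theorem~\ref{th1} is automatically fulfilled, so Theorem~\ref{th1} at once yields the equivalence of items (1), (2), (4), (5), (6), (7), (8), (9), (10) of the corollary (which match, respectively, items (1), (2), (4), (5), (3), (6), (8), (9), (7) of Theorem~\ref{th1}).

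It remains to attach item~(3). The direction (2)$\Rightarrow$(3) is immediate: $C_p(X)$ is separable and, by~(2), every dense subset of $C_p(X)$ is sequentially dense, so in particular every countable dense subset is, i.e. $C_p(X)$ is strongly sequentially separable. For (3)$\Rightarrow$(2) I would invoke the classical fact that $nw(C_p(X))=nw(X)$ (see \cite{arh2}); since a separable metrizable space has a countable network, $nw(C_p(X))=\aleph_0$, so $C_p(X)$ is hereditarily separable. Then any dense set $D\subseteq C_p(X)$ is separable in its subspace topology, hence contains a countable subset $D'$ dense in $D$ and therefore dense in $C_p(X)$; by~(3) the set $D'$ is sequentially dense, so the larger set $D$ is too. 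Thus $C_p(X)$ is strongly sequentially dense in itself, which is~(2).

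An alternative to the network-weight step, for those who prefer a self-contained argument, is to connect (3) with~(6) by mimicking the proof of Theorem~\ref{th71}. Given $\mathcal{U}\in\Omega$, one first passes to a countable $\omega$-subcover $\{U_n:n\in\omega\}\subseteq\mathcal{U}$ (possible since all finite powers of a separable metrizable space are Lindel\"{o}f, cf. \cite{gn}); then, fixing a countable dense $P\subseteq C_p(X)$ and a countable base $\{B_m\}$ of $X$, one lets $\mathcal{D}$ consist, for each triple $(h,n,S)$ with $h\in P$, $n\in\omega$ and $S$ a finite family of basic sets whose closures lie in $U_n$, of one function $f\in C(X)$ agreeing with $h$ on $\overline{\bigcup S}$ and equal to $1$ on $X\setminus U_n$ (such $f$ exists by Tietze's theorem, as $X$ is normal). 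Using regularity and the density of $P$ one checks that $\mathcal{D}$ is a countable dense subset of $C_p(X)$, and then~(3) produces a sequence in $\mathcal{D}$ converging to $\mathbf{0}$; the sets $U_{n_i}\in\mathcal{U}$ attached to its terms form a $\gamma$-cover of $X$, exactly as in Theorem~\ref{th71}, so $X$ satisfies ${\Omega\choose\Gamma}$, whence $S_1(\Omega,\Gamma)$ by Theorem~\ref{th7}. The one genuine obstacle here is organizing the construction so that $\mathcal{D}$ remains countable, which is precisely why the hereditary-separability argument of the previous paragraph is the cleaner route.
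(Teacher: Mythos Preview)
Your proposal is correct and follows the natural route: the paper states this corollary with no proof, treating it as an immediate consequence of Theorem~\ref{th1} once one notes that separable metrizability forces $iw(X)=\aleph_0$ (hence $C_p(X)$ separable by Theorem~\ref{th31}). Your explicit handling of item~(3) via hereditary separability of $C_p(X)$ (from $nw(C_p(X))=nw(X)\le\aleph_0$) is a genuine addition, since the paper does not spell out how ``strongly sequentially separable'' enters the chain; this is the right way to close that gap, and your alternative countable-$\omega$-cover argument mimicking Theorem~\ref{th71} is also sound but, as you note, less clean.
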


\section{$S_1(\mathcal{S},\mathcal{D})$}

In \cite{sak1} (Theorem 2.3),  M. Sakai proved:

\begin{theorem}(Sakai) \label{th66} For a   space $X$, the following
statements are equivalent:

\begin{enumerate}

\item $C_p(X)$ $\models$ $S_{1}(\Gamma_x,\Omega_x)$ (the weak
sequence selection property);

\item $X$ $\models$ $S_{1}(\Gamma_{cl}, \Omega_{cl})$ and is
strongly zero-dimensional.

\end{enumerate}

\end{theorem}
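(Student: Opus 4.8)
The plan is to route everything through the subspace $C_p(X,\{0,1\})$ of continuous $\{0,1\}$-valued functions, which encodes exactly the clopen-cover combinatorics of $X$, and to use strong zero-dimensionality as the bridge between arbitrary real-valued data and $\{0,1\}$-valued data. First I would record three facts. (i) $C_p(X)$ is a topological group, hence homogeneous, so $C_p(X)\models S_1(\Gamma_x,\Omega_x)$ holds at every point precisely when it holds at $\mathbf{0}$; from now on I work at $\mathbf{0}$. (ii) $C_p(X,\{0,1\})$ is a closed subspace of $C_p(X)$ (a pointwise limit of continuous $\{0,1\}$-valued functions is again one) containing $\mathbf{0}$, and under $U\mapsto\chi_U$ a nontrivial clopen family $\{W_k:k\in\omega\}$ is a $\gamma$-cover of $X$ exactly when the functions $\chi_{X\setminus W_k}$ are distinct, different from $\mathbf{0}$, and converge to $\mathbf{0}$, while a selected subfamily $\{W_{k(n)}:n\in\omega\}$ is an $\omega$-cover exactly when $\{\chi_{X\setminus W_{k(n)}}:n\in\omega\}\in\Omega_{\mathbf{0}}$; hence $C_p(X,\{0,1\})\models S_1(\Gamma_{\mathbf{0}},\Omega_{\mathbf{0}})$ if and only if $X\models S_1(\Gamma_{cl},\Omega_{cl})$. (iii) $S_1(\Gamma_{\mathbf{0}},\Omega_{\mathbf{0}})$ passes to every subspace containing $\mathbf{0}$, since the points selected from the given subsets never leave the subspace, and convergence to $\mathbf{0}$ and membership in $\Omega_{\mathbf{0}}$ of a subset of the subspace do not depend on whether they are computed in the subspace or in $C_p(X)$.

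From (iii) and (ii) the implication $(1)\Rightarrow X\models S_1(\Gamma_{cl},\Omega_{cl})$ is immediate: the subspace $C_p(X,\{0,1\})$ inherits $S_1(\Gamma_{\mathbf{0}},\Omega_{\mathbf{0}})$, and the correspondence translates this into the clopen-cover statement (one discards the trivial members $\emptyset$ and $X$, which costs nothing, and the produced $\omega$-cover is automatically infinite, since membership in $\Omega_{\mathbf{0}}$ forces that).

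The hard direction is $(1)\Rightarrow X$ strongly zero-dimensional, which I would prove in contrapositive form, and this is the step I expect to be the principal obstacle. Suppose $X$ is not strongly zero-dimensional; then there are disjoint zero-sets $Z_0,Z_1$ that no clopen set separates, together with a continuous $\phi:X\to[0,1]$ with $\phi[Z_0]=\{0\}$ and $\phi[Z_1]=\{1\}$. The image $Y:=\phi[X]$ is not zero-dimensional, because a clopen subset of $Y$ separating $\phi[Z_0]$ from $\phi[Z_1]$ would pull back along $\phi$ to a clopen subset of $X$ separating $Z_0$ from $Z_1$; being a subspace of $\mathbb{R}$, $Y$ therefore contains a nondegenerate interval $[a,b]$, which is a retract of $Y$ via the clamping map $t\mapsto\max(a,\min(b,t))$. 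Consequently $C_p([0,1])\cong C_p([a,b])$ embeds as a subspace of $C_p(Y)$ containing $\mathbf{0}$, while $g\mapsto g\circ\phi$ embeds $C_p(Y)$ as a subspace of $C_p(X)$ containing $\mathbf{0}$; so by (iii) the whole matter reduces to the single assertion that $C_p([0,1])\not\models S_1(\Gamma_{\mathbf{0}},\Omega_{\mathbf{0}})$. Verifying this is where the genuine work lies: one must construct a sequence of sets converging to $\mathbf{0}$ in $C_p([0,1])$ for which \emph{no} choice function yields a set clustering at $\mathbf{0}$, exploiting that the connectedness of $[0,1]$ rules out clopen sets and hence forbids any "sharp cutoff" of a continuous function — exactly the feature that would otherwise let a selection be driven below a prescribed $\varepsilon$ on a prescribed finite set. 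The rest of the proof is bookkeeping.

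For $(2)\Rightarrow(1)$, assume $X$ is strongly zero-dimensional and $X\models S_1(\Gamma_{cl},\Omega_{cl})$, equivalently (by (ii)) $C_p(X,\{0,1\})\models S_1(\Gamma_{\mathbf{0}},\Omega_{\mathbf{0}})$. Let $\langle A_n:n\in\omega\rangle$ be a sequence with each $A_n\in\Gamma_{\mathbf{0}}$ in $C_p(X)$, and fix a partition $\omega=\bigcup_{m\in\omega}I_m$ into infinite sets. Strong zero-dimensionality supplies, for $f\in C(X)$ and $m\in\omega$ (whenever the relevant zero-sets are proper and disjoint), a clopen set $E_f^{m}$ with $\{x:|f(x)|\le 2^{-m-1}\}\subseteq E_f^{m}\subseteq\{x:|f(x)|<2^{-m}\}$ and $E_f^{m}\ne X$. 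For $n\in I_m$ let $\mathcal{G}_n:=\{\chi_{X\setminus E_f^{m}}:f\in A_n\}$; using $A_n\in\Gamma_{\mathbf{0}}$ one checks that $\mathcal{G}_n$ converges to $\mathbf{0}$ in $C_p(X,\{0,1\})$ — for a finite $F$, all but finitely many $f\in A_n$ satisfy $\max_{x\in F}|f(x)|\le 2^{-m-1}$, so all but finitely many members of $\mathcal{G}_n$ vanish on $F$; and $\mathcal{G}_n$ is infinite because a clopen set arising as $E_f^{m}$ for infinitely many $f\in A_n$ has a point outside it at which infinitely many $f$ exceed $2^{-m-1}$, contradicting $A_n\in\Gamma_{\mathbf{0}}$. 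Applying the hypothesis on each block $I_m$ to $\langle\mathcal{G}_n:n\in I_m\rangle$ gives, for each $n\in I_m$, a function $\chi_{X\setminus E_{f_n}^{m}}$ with $f_n\in A_n$ such that $\{\chi_{X\setminus E_{f_n}^{m}}:n\in I_m\}\in\Omega_{\mathbf{0}}$, i.e.\ every finite $F\subseteq X$ lies in $E_{f_n}^{m}$ for some $n\in I_m$, whence $|f_n(x)|<2^{-m}$ for all $x\in F$. Put $f_n:=f_n$ as chosen. Given any finite $F\subseteq X$ and $\varepsilon>0$, pick $m$ with $2^{-m}<\varepsilon$ and the associated $n\in I_m$; then $|f_n|<\varepsilon$ on $F$, and since no $f_n$ equals $\mathbf{0}$ this yields $\{f_n:n\in\omega\}\in\Omega_{\mathbf{0}}$. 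The few degenerate situations — blocks on which infinitely many $f\in A_n$ already have supremum norm below $2^{-m-1}$ (so no proper $E_f^{m}$ exists) — are disposed of directly, since they hand over a subsequence converging to $\mathbf{0}$, which already belongs to $\Omega_{\mathbf{0}}$. By homogeneity this establishes $C_p(X)\models S_1(\Gamma_x,\Omega_x)$, closing the equivalence.
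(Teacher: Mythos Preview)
The paper does not supply its own proof of this theorem; it is quoted verbatim from Sakai \cite{sak1} (Theorem~2.3) and used as a black box. So there is no argument in the paper to compare yours against.

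That said, your proposal has a genuine gap at precisely the point you flag. You reduce ``$C_p(X)\models S_1(\Gamma_x,\Omega_x)\Rightarrow X$ strongly zero-dimensional'' to the single assertion $C_p([0,1])\not\models S_1(\Gamma_{\mathbf 0},\Omega_{\mathbf 0})$, announce that ``this is where the genuine work lies,'' and then do not do that work. The assertion is correct---it is, after all, a special case of the theorem you are proving---but exhibiting a concrete sequence $\langle A_n\rangle$ in $\Gamma_{\mathbf 0}$ for $C_p([0,1])$ that defeats every selector is not automatic, and nothing in your write-up indicates how you would build it. Without that construction the contrapositive is incomplete. Sakai's original argument bypasses this reduction: given disjoint zero-sets $Z_0,Z_1$ and a Urysohn function $g:X\to[0,1]$, he manufactures, directly from $g$, explicit sequences in $C_p(X)$ converging to $\mathbf 0$, applies $S_1(\Gamma_{\mathbf 0},\Omega_{\mathbf 0})$ to them, and reads off from the selected functions a clopen separator of $Z_0$ and $Z_1$. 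That route is self-contained and no harder than the lemma you have postponed; your detour through $C_p([0,1])$ is conceptually tidy but trades one nontrivial step for another of at least equal difficulty and then leaves it unpaid.

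Your preliminary facts (i)--(iii) and your $(2)\Rightarrow(1)$ argument are fine: the block decomposition of $\omega$, the clopen interpolation between $\{|f|\le 2^{-m-1}\}$ and $\{|f|<2^{-m}\}$ supplied by strong zero-dimensionality, and the finite-to-one check ensuring each $\mathcal G_n$ is infinite are all correct and are essentially the devices one finds in the literature. The deficiency is confined to the strong-zero-dimensionality half of $(1)\Rightarrow(2)$.
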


\begin{definition}(Sakai)
An $\gamma$-cover $\mathcal{U}$ of co-zero sets of $X$  is {\bf
$\gamma_F$-shrinkable} if there exists a $\gamma$-cover $\{F(U) :
U\in \mathcal{U}\}$ of zero-sets of $X$ with $F(U)\subset U$ for
every $U\in \mathcal{U}$.
\end{definition}

For a topological space $X$ we denote:

$\bullet$ $\Gamma_F$ --- the family of $\gamma_F$-shrinkable
$\gamma$-covers of $X$.

\begin{proposition}\label{th86} For a strongly
zero-dimensional   space $X$, the following statements are
equivalent:

\begin{enumerate}

\item $X$ $\models$ $S_{1}(\Gamma_F, \Omega)$;

\item $X$ $\models$ $S_{1}(\Gamma_{cl}, \Omega_{cl})$.

\end{enumerate}

\end{proposition}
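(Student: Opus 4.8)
The plan is to prove this as a chain of two implications, exploiting the fact that in a strongly zero-dimensional Tychonoff space clopen sets and cozero sets are closely intertwined. For $(2)\Rightarrow(1)$, I would argue that $\Gamma_F \subseteq$ (something refinable from $\Gamma_{cl}$) and that any $\omega$-cover by clopen sets is in particular an $\omega$-cover. More precisely, given a sequence $\{\mathcal{U}_n\}_{n\in\omega}$ of $\gamma_F$-shrinkable $\gamma$-covers of cozero sets, each $\mathcal{U}_n$ comes with a witnessing $\gamma$-cover $\{F_n(U): U\in\mathcal{U}_n\}$ of zero-sets with $F_n(U)\subset U$. In a strongly zero-dimensional space, for each pair $F_n(U)\subset U$ (zero-set inside cozero-set) one can insert a clopen set $C_n(U)$ with $F_n(U)\subseteq C_n(U)\subseteq U$; the family $\{C_n(U):U\in\mathcal{U}_n\}$ is then a clopen $\gamma$-cover (it is a $\gamma$-cover because it contains the $\gamma$-cover $\{F_n(U)\}$ pointwise from below, so each point is eventually in all $C_n(U)$). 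Apply $S_1(\Gamma_{cl},\Gamma_{cl})$ — wait, we only have $S_1(\Gamma_{cl},\Omega_{cl})$ — to the sequence $\{\{C_n(U):U\in\mathcal{U}_n\}\}_n$ to pick $C_n(U_n)$ forming a clopen $\omega$-cover; then $\{U_n:n\in\omega\}$ is an $\omega$-cover of cozero sets, hence an $\omega$-cover, giving $S_1(\Gamma_F,\Omega)$.

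For $(1)\Rightarrow(2)$, I would start from a sequence $\{\mathcal{W}_n\}_{n\in\omega}$ of clopen $\gamma$-covers. Each clopen $\gamma$-cover is automatically a $\gamma$-cover of cozero sets, and it is trivially $\gamma_F$-shrinkable — take $F(W)=W$, since a clopen set is both a zero-set and a cozero-set in a strongly zero-dimensional space. So each $\mathcal{W}_n\in\Gamma_F$. Apply $S_1(\Gamma_F,\Omega)$ to obtain $W_n\in\mathcal{W}_n$ with $\{W_n:n\in\omega\}\in\Omega$; since each $W_n$ is clopen, this is a clopen $\omega$-cover, i.e. an element of $\Omega_{cl}$, establishing $S_1(\Gamma_{cl},\Omega_{cl})$.

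The main obstacle is the insertion step in $(2)\Rightarrow(1)$: producing, for a zero-set $F$ contained in a cozero-set $U$, a clopen $C$ with $F\subseteq C\subseteq U$, and ensuring the resulting family is genuinely a $\gamma$-cover (not merely an $\omega$-cover). The separation is exactly where strong zero-dimensionality is used — $X\setminus U$ is a zero-set disjoint from the zero-set $F$, and strong zero-dimensionality of a Tychonoff space gives a clopen set separating two disjoint zero-sets. The $\gamma$-cover property then follows because for each $x\in X$, $x\in F_n(U)$ for all but finitely many $U\in\mathcal{U}_n$ in the relevant enumeration, and $F_n(U)\subseteq C_n(U)$; one must be a little careful that $\{C_n(U):U\in\mathcal{U}_n\}$ is infinite and indexed compatibly, but this is routine since $\mathcal{U}_n$ itself is infinite. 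A secondary subtlety is that Proposition~\ref{th86} as stated only asserts equivalence for the $S_1(\cdot,\Omega)$ versions, so we never need the harder $\Gamma\to\Gamma$ selection; the target families are $\omega$-covers throughout, which makes both directions go through cleanly.
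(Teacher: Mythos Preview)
Your proposal is correct and is exactly the argument the paper has in mind. The paper does not give a standalone proof of this proposition; the only justification appears as a one-line remark in the proof of Theorem~\ref{th50} (the step $(2)\Leftrightarrow(3)$), which records precisely your two observations: $\Gamma_{cl}\subset\Gamma_F$ because a clopen set serves as its own zero-set shrinking, and for strongly zero-dimensional $X$ every $\mathcal{U}\in\Gamma_F$ admits a clopen $\gamma$-cover $\mathcal{W}$ with $F(U)\subset W\subset U$, obtained by separating the disjoint zero-sets $F(U)$ and $X\setminus U$ by a clopen set.
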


\begin{proposition} \label{pr23} For a   space $X$, the following statements are
equivalent:

\begin{enumerate}

\item $C_p(X)$ $\models$ $S_{1}(\Gamma_x,\Omega_x)$;

\item $X$ $\models$ $S_{1}(\Gamma_F, \Omega)$.

\end{enumerate}

\end{proposition}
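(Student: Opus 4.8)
The plan is to prove Proposition~\ref{pr23} by adapting the classical Gerlits--Nagy duality between $\omega$-covers of $X$ and the behaviour of finite sets in $C_p(X)$ near the constant function $\mathbf{0}$, but now tracking the \emph{zero-set structure} that distinguishes $\Gamma_F$ from $\Gamma$. The key dictionary entry is: a function $f\in C(X)$ with $f\upharpoonright(X\setminus U)=0$ for some cozero set $U$ corresponds to a pair consisting of the cozero set $U$ and its shrinking zero-set $\{x : |f(x)|\le\varepsilon\}\supseteq X\setminus U$; conversely, a $\gamma_F$-shrinkable $\gamma$-cover $\{U_n\}$ with zero-set shrinking $\{F(U_n)\}$ lets us build, for each $n$, a function $g_n\in C(X)$ with $g_n\upharpoonright F(U_n)=0$ and $g_n\upharpoonright(X\setminus U_n)=1$ (using Tychonoff-ness / strong continuity arguments), and then the sets $A_n=\{$finitely supported perturbations of such $g_n\}$ become elements of $\Gamma_{\mathbf 0}$ in $C_p(X)$.

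First I would prove $(1)\Rightarrow(2)$. Given a sequence $\{\mathcal U_n\}$ of $\gamma_F$-shrinkable $\gamma$-covers with zero-set shrinkings $\{F(U) : U\in\mathcal U_n\}$, I form for each $n$ the set $A_n\subseteq C_p(X)$ of functions $f$ such that there is $U\in\mathcal U_n$ with $f\upharpoonright(X\setminus U)=0$ and $\|f-\mathbf 1\|$ arbitrarily small on a prescribed finite set --- more precisely I take $A_n$ to be a family converging to $\mathbf 0$ built so that $\mathbf 0=\lim A_n$, i.e. $A_n\in\Gamma_{\mathbf 0}$. The point is that because $\mathcal U_n$ is a $\gamma$-cover, every neighbourhood $[x_1,\dots,x_k;(-\delta,\delta)^k]$ of $\mathbf 0$ contains all but finitely many members of a suitably chosen sequence inside $A_n$, so $A_n\in\Gamma_{\mathbf 0}$. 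Applying $S_1(\Gamma_{\mathbf 0},\Omega_{\mathbf 0})$ at the point $\mathbf 0$ yields $f_n\in A_n$ with $\{f_n : n\in\omega\}\in\Omega_{\mathbf 0}$, i.e. $\mathbf 0\in\overline{\{f_n\}}\setminus\{f_n\}$. Pick $U_n\in\mathcal U_n$ witnessing $f_n\in A_n$; then I claim $\{U_n : n\in\omega\}$ is an $\omega$-cover: for a finite $F\subseteq X$, the neighbourhood $[F;(-1,1)^{|F|}]$ of $\mathbf 0$ contains some $f_n$, forcing $F\subseteq U_n$. The $\gamma_F$-shrinkability is inherited from that of each $\mathcal U_n$ (each chosen $U_n$ comes with its zero-set $F(U_n)$), so this is even an element of $\Omega$ coming from $\gamma_F$-shrinkable data --- which is all that is claimed, namely membership in $\Omega$.

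For $(2)\Rightarrow(1)$, I would run the converse translation: given a sequence $\{B_n\}$ of elements of $\Gamma_{\mathbf 0}$ in $C_p(X)$, I would, for each $f\in B_n$, attach the cozero set $U_f=\{x : |f(x)|>\tfrac12\,\|f\|_{\text{rel}}\}$ (or a similar level-set) together with the zero-set $F_f=\{x:|f(x)|\ge c\}$ for appropriate $c$, and argue that for each $n$ the family $\mathcal U_n=\{U_f : f\in B_n, f\ \text{far from}\ \mathbf 0\ \text{on a finite set}\}$ together with its zero-set shrinkings forms a $\gamma_F$-shrinkable $\gamma$-cover --- the $\gamma$-property coming precisely from $\mathbf 0=\lim B_n$: each $x$ is eventually outside any fixed small neighbourhood's complement, so eventually $x\in U_f$. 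Then $S_1(\Gamma_F,\Omega)$ selects $U_{f_n}\in\mathcal U_n$ with $\{U_{f_n}\}\in\Omega$, and the corresponding $f_n\in B_n$ satisfy $\mathbf 0\in\overline{\{f_n\}}\setminus\{f_n\}$ because every basic neighbourhood of $\mathbf 0$ around a finite set $F$ is met by some $f_n$ whose $U_{f_n}\supseteq F$ (one has to check that $F\subseteq U_{f_n}$ together with the level-set definition bounds $|f_n|$ on $F$ appropriately --- this requires calibrating the constants in $U_f$ and $F_f$ carefully).

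The main obstacle I anticipate is the bookkeeping in $(2)\Rightarrow(1)$: turning ``$\mathbf 0\in\overline{\{f_n\}}$'' back into ``$F\subseteq U_{f_n}$ with $|f_n|$ small on $F$'' is not automatic, because membership of $f_n$ in a neighbourhood $[F;(-\varepsilon,\varepsilon)]$ only says $|f_n(x)|<\varepsilon$ on $F$, whereas I defined $U_{f_n}$ as a superlevel set of $f_n$, so I need to instead use the \emph{complement} structure --- i.e. work with functions in $B_n$ that are near $\mathbf 1$ rather than near $\mathbf 0$, which means I should first replace $B_n\in\Gamma_{\mathbf 0}$ by the translated family $\mathbf 1 - B_n\in\Gamma_{\mathbf 1}$, or equivalently set up the correspondence so that ``$f$ small on $X\setminus U$'' matches ``$\mathbf 1-f$ small on a finite set.'' Once the reflection $f\mapsto \mathbf 1-f$ (or a retraction onto $[0,1]$-valued functions, harmless for the selection-principle statements) is inserted at the right place, the two directions become mirror images of the Gerlits--Nagy argument, and the only remaining care is ensuring the zero-set shrinkings survive the translation, which they do since $\{x:|\mathbf 1-f(x)|\ge c\}$ is a zero-set contained in the cozero set $\{x:|\mathbf 1-f(x)|>c/2\}$.
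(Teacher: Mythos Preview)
Your overall strategy---translating between $\gamma_F$-covers of $X$ and sequences in $C_p(X)$ converging to $\mathbf{0}$---is the right one, and matches the paper's approach in spirit. However, the execution has two concrete errors that would make the argument fail as written.

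In $(1)\Rightarrow(2)$ you state that the test functions satisfy $f\upharpoonright(X\setminus U)=0$, but then claim that $f_n\in[F;(-1,1)^{|F|}]$ forces $F\subseteq U_n$. That implication is false with your convention: if $f_n$ vanishes on $X\setminus U_n$ then $|f_n|<1$ holds automatically there and tells you nothing about $F$. What you need (and what you wrote correctly in your opening plan paragraph) is $f\upharpoonright F(U)=0$ and $f\upharpoonright(X\setminus U)=1$; then the $\gamma$-cover $\{F(U):U\in\mathcal U_n\}$ makes $A_n\in\Gamma_{\mathbf 0}$, and $|f_n|<1$ on $F$ genuinely forces $F\subseteq U_n$. (The paper handles this direction by quoting Sakai's Theorem~\ref{th66} together with Proposition~\ref{th86}, so your direct argument, once corrected, is a slightly different and more self-contained route.)

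In $(2)\Rightarrow(1)$ the problem is more serious. Your sets $U_f=\{x:|f(x)|>c\}$ are \emph{super}level sets; since $B_n\to\mathbf 0$, these sets shrink rather than exhaust $X$, so $\{U_f:f\in B_n\}$ is not a $\gamma$-cover at all. Your proposed fix via $\mathbf 1-f$ is a detour around a self-inflicted obstacle. The paper's construction is simply to take \emph{sub}level sets with shrinking radii: put $\mathcal U_i=\{f^{-1}(-1/i,1/i):f\in A_i\}$ with zero-set shrinking $\{f^{-1}[-1/(i+1),1/(i+1)]\}$. These are $\gamma_F$-shrinkable $\gamma$-covers precisely because $A_i\to\mathbf 0$. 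The decreasing radius $1/i$ is not cosmetic: after $S_1(\Gamma_F,\Omega)$ yields $U_i\in\mathcal U_i$ with $\{U_i\}\in\Omega$, you need it to conclude $\{f_i\}\in\Omega_{\mathbf 0}$---given $K$ finite and $\varepsilon>0$, you first choose $i_0$ with $1/i_0<\varepsilon$, and only then use the $\omega$-cover property to find $i\ge i_0$ with $K\subseteq U_i$. Your version, with a fixed threshold, would only place $\mathbf 0$ in the closure for neighbourhoods of a single fixed width, which is not enough.
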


\begin{proof} $(1)\Rightarrow(2)$. By Theorem \ref{th66} and
Proposition \ref{th86}.

 $(2)\Rightarrow(1)$. Let $X$ $\models$ $S_{1}(\Gamma_F, \Omega)$
and $\{A_{i}\}_{i\in \omega}$ such that $A_i\in \Gamma_{\bf{0}}$
for each $i\in \omega$. Consider
$\mathcal{U}_i=\{f^{-1}(-\frac{1}{i}, \frac{1}{i}): f\in A_i \}$
for each $i\in \omega$. Without loss of generality we can assume
that there is $i'$ that a set $U\neq X$ for any $i>i'$ and $U\in
\mathcal{U}_i$. Otherwise there is sequence $\{f_{i_k}\}_{k\in
\omega}$ such that $\{f_{i_k}\}_{k\in \omega}$ uniform converge to
$\bf{0}$ and $\{f_{i_k} : k\in \omega\}\in \Omega_{\bf 0}$.

Note that $\mathcal{F}_i=\{f^{-1}[-\frac{1}{i+1}, \frac{1}{i+1}]:
f\in A_i \}$ is $\gamma$-cover of zero-sets of $X$. It follows
that $\mathcal{U}_i\in \Gamma_F$ for each $i\in \omega$. By $X$
$\models$ $S_{1}(\Gamma_F, \Omega)$, there is a set $\{U_{i}:
i\in\omega\}$ such that for each $i$, $U_{i}\in \mathcal{U}_i$,
and $\{U_{i}: i\in\omega \}$ is an element of $\Omega$.

We claim that $\bf 0$ $\in \overline{\{f_{i} : i\in \omega \}}$.
Let $W=<$ $\bf 0$ $, K, \epsilon>$ be a base neighborhood of $\bf
0$ where $\epsilon>0$ and $K$ is a finite subset of $X$, then
there are $i_0\in \omega$ such that $\frac{1}{i_0}<\epsilon$ and
$U_{i_0}\supset K$. It follows that $f_{i_0}\in W$ and, hence,
$\bf 0$ $\in \overline{\{f_{i} : i\in \omega \}}$ and $C_p(X)$
$\models$ $S_{1}(\Gamma_x, \Omega_x)$.

 By Theorem
\ref{th66}, we have that $X$ is strongly zero-dimensional.

\end{proof}

\begin{theorem}\label{th50} For a   space $X$, the following
statements are equivalent:

\begin{enumerate}

\item $C_p(X)$ $\models$ $S_{1}(\mathcal{S},\mathcal{D})$ and is
sequentially separable;

\item $X$ $\models$ $S_{1}(\Gamma_F, \Omega)$,  $X$ $\models$ $V$;

\item $X$ $\models$ $S_{1}(\Gamma_{cl}, \Omega_{cl})$, $X$
$\models$ $V$ and is strongly zero-dimensional;

\item $C_p(X)$ $\models$ $S_{1}(\Gamma_x, \Omega_x)$ and is
sequentially separable;

\item $C_p(X)$ $\models$ $S_{1}(\mathcal{S}, \Omega_x)$ and is
sequentially separable.

\end{enumerate}

\end{theorem}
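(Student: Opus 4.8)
\textbf{Proof plan for Theorem \ref{th50}.}
The strategy is to establish the cycle of implications
$(1)\Rightarrow(4)\Rightarrow(5)\Rightarrow(1)$ together with
$(1)\Leftrightarrow(2)\Leftrightarrow(3)$, using the already-proved
machinery: Proposition \ref{pr23} translates the $C_p(X)$-side selection
principle $S_1(\Gamma_x,\Omega_x)$ into the $X$-side principle
$S_1(\Gamma_F,\Omega)$; Theorem \ref{th66} and Proposition \ref{th86}
handle the passage between $S_1(\Gamma_F,\Omega)$ and
$S_1(\Gamma_{cl},\Omega_{cl})$ for strongly zero-dimensional spaces; and
Theorem \ref{th38} (Velichko) identifies ``$C_p(X)$ sequentially
separable'' with the $V$-property of $X$. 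Thus once we know
$(1)\Leftrightarrow(4)$ the equivalences $(2)\Leftrightarrow(3)$ and
$(2)\Leftrightarrow(4)$ follow by simply conjoining these known
characterizations, since in (1)--(5) sequential separability of $C_p(X)$
is equivalent (Theorem \ref{th38}) to $X\models V$, and $V$ already
forces strong zero-dimensionality via the condensation onto a separable
metric space (here one needs the standard fact that the relevant
condensations can be taken into a zero-dimensional metric space when $X$
is strongly zero-dimensional, or else absorb zero-dimensionality as a
separate clause exactly as in (3)).

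The implications $(1)\Rightarrow(4)$ and $(5)\Rightarrow(1)$ are
formal: $(1)\Rightarrow(4)$ holds because every $A\in\Gamma_{\bf 0}$ is,
in particular, a sequentially dense-generating sequence and hence a
sequence of points of a sequentially dense set, so a selector witnessing
$S_1(\mathcal{S},\mathcal{D})$ on the constant sequence $A_n=D$ (for $D$
a fixed countable sequentially dense set, which exists by sequential
separability) yields a dense set, after which one argues as in the proof
of Theorem \ref{th1} that $S_1(\Gamma_x,\Omega_x)$ is just the
``$x=\lim$'' version; more precisely, $(4)\Rightarrow(5)$ is immediate
since $\mathcal{S}\supseteq\{$convergent sequences$\}$ gives
$S_1(\Gamma_x,\Omega_x)\Rightarrow S_1(\mathcal{S},\Omega_x)$ is the
wrong direction, so instead one takes $(5)\Rightarrow(1)$ directly: given
$S_1(\mathcal{S},\Omega_x)$ and a sequence $\{D_n\}$ of sequentially
dense sets, apply the hypothesis with a fixed target point $f$, for each
$f\in C(X)$ in a countable dense set, diagonalizing to build a single
countable set that is dense; here the separability needed to run the
diagonalization comes from sequential separability, which is part of the
hypothesis in (1) and (5) and, via Theorem \ref{th38}, is exactly
$X\models V$.

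The genuine content is $(4)\Rightarrow(1)$, i.e. upgrading the
local-at-$\bf 0$ selection property $S_1(\Gamma_x,\Omega_x)$ to the
global $S_1(\mathcal{S},\mathcal{D})$. The plan mirrors the technique in
the proofs of Theorems \ref{th71} and \ref{th1}: given sequentially
dense sets $D_n\subseteq C_p(X)$ and a fixed countable dense set
$Q=\{g_k:k\in\omega\}$ (available since $C_p(X)$ is separable, being
sequentially separable), write $\omega=\bigsqcup_k I_k$ with each $I_k$
infinite; for each $k$ and each $n\in I_k$ use sequential density of
$D_n$ to pick sequences in $D_n$ converging to translates of $g_k$, then
feed the resulting $\Gamma_{g_k}$-type sequences into
$S_1(\Gamma_x,\Omega_x)$ (translating to $\bf 0$ by subtracting $g_k$,
using that translation is a homeomorphism of $C_p(X)$) to extract one
point $d_n\in D_n$ per $n$ so that $\{d_n:n\in I_k\}$ accumulates at
$g_k$; then $\{d_n:n\in\omega\}$ meets every basic open set (because it
accumulates at every $g_k$ and $Q$ is dense), hence is dense, giving
$S_1(\mathcal{S},\mathcal{D})$. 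The main obstacle is the bookkeeping in
this diagonal construction: one must ensure the single selector $d_n$
chosen for a given $n\in I_k$ is simultaneously the one needed for the
$g_k$-task, which forces a careful enumeration so that each index $n$ is
used for exactly one target $g_k$, and one must verify that convergence
of the perturbed sequences in $D_n$ to $g_k-$neighborhoods really does
land inside $\Gamma_{\bf 0}$ after translation (this is where Tychonoff
separation and the explicit form of basic neighborhoods $<f,A,\epsilon>$
are used, exactly as in the density argument for $\mathcal{D}$ in the
proof of Theorem \ref{th71}). Once this is set up, the reverse direction
$(1)\Rightarrow(4)$ and the identification with (2), (3), (5) are
routine given Propositions \ref{th86}, \ref{pr23} and Theorems
\ref{th66}, \ref{th38}.
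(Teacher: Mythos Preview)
Your diagonalization for $(4)\Rightarrow(1)$ is correct and is essentially the paper's route $(4)\Rightarrow(5)\Rightarrow(1)$ unwound: fix a countable dense set, reindex the given sequentially dense $D_n$ as $\{S_{n,m}\}$, use sequential density to extract from each $S_{n,m}$ a sequence converging to the $n$-th dense point, and apply $S_1(\Gamma_x,\Omega_x)$ (equivalently $S_1(\mathcal{S},\Omega_x)$) at that point. The identifications $(2)\Leftrightarrow(4)$ and $(2)\Leftrightarrow(3)$ via Proposition~\ref{pr23}, Theorem~\ref{th66}, Proposition~\ref{th86} and Theorem~\ref{th38} are also fine. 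One correction: the $V$-property does \emph{not} by itself force strong zero-dimensionality; that comes from $S_1(\Gamma_x,\Omega_x)$ via Sakai's Theorem~\ref{th66}, which is available once you have $(4)$.

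The genuine gap is $(1)\Rightarrow(4)$, which you label ``formal'' but never prove. A set $A\in\Gamma_{\bf 0}$ is merely a sequence converging to $\bf 0$; it is \emph{not} sequentially dense in $C_p(X)$, so the hypothesis $S_1(\mathcal{S},\mathcal{D})$ cannot be applied to the $A_n$ directly, and your sentence ``a selector witnessing $S_1(\mathcal{S},\mathcal{D})$ on the constant sequence $A_n=D$ \dots'' conflates the given $\Gamma_{\bf 0}$-sets with a fixed sequentially dense $D$ and produces no selection from the $A_n$ at all. Naive enlargements such as $S_n=\{h_m+f: h_m\in S,\ f\in A_n\}$ are sequentially dense, but a dense selection $d_n=h_{m(n)}+f_n$ from them gives no control over $f_n$. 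The paper closes the loop on the cover side, proving $(1)\Rightarrow(2)$: given $\gamma_F$-shrinkable $\gamma$-covers $\mathcal{V}_i$ and a countable sequentially dense set $\{h_m\}$, one manufactures sequentially dense sets
\[
\mathcal{S}_i=\{f^m_i\in C(X): f^m_i\upharpoonright F(U^m_i)=h_m,\ f^m_i\upharpoonright(X\setminus U^m_i)=1\},
\]
applies $S_1(\mathcal{S},\mathcal{D})$ to the $\mathcal{S}_i$, and reads off an $\omega$-cover by testing the dense selection against the neighborhood $[K,(-1,1)]$ of $\bf 0$. This ``glue $h_m$ onto the shrinking $F(U^m_i)$'' construction is the missing idea; without it (or an equivalent device) your cycle does not close.
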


\begin{proof} $(1)\Rightarrow(2)$. Let $\{\mathcal{V}_i\}\subset \Gamma_F$ and
$\mathcal{S}=\{h_m : m\in \omega\}$ be a countable sequentially
dense subset of $C_p(X)$. For each $i\in \omega$ we consider a
countable sequentially dense subset $\mathcal{S}_i$ of $C_p(X)$
and $\mathcal{U}_i=\{ U^{m}_i\}_{m\in \omega}$ where
$\mathcal{U}_i\subset \mathcal{V}_i$ and $\mathcal{S}_i =\{
f^m_i\in C(X) : f^m_i\upharpoonright F(U^{m}_i)=h_m$ and
$f^m_i\upharpoonright (X\setminus U^{m}_i)=1$ for $m \in \omega
\}$. Note that $\mathcal{U}_i\in \Gamma_F$ for each $i\in \omega$.

Since $\mathcal{F}_i=\{F(U^{m}_i) : m\in \omega\}$ is infinite, it
is a $\gamma$-cover of zero subsets of $X$. Since $\mathcal{S}$ is
a countable sequentially dense subset of $C_p(X)$, we have that
$\mathcal{S}_i$ is a  countable sequentially dense subset of
$C_p(X)$ for each $i\in \omega$.  Let $h\in C(X)$, there is a
sequence $\{h_{m_s}: s\in \omega\}\subset \mathcal{S}$ such that
$\{h_{m_s}\}_{s\in \omega}$ converge to $h$.
 Let $K$ be a finite subset of $X$, $\epsilon>0$ and $W=<h, K,\epsilon>$ be
a base neighborhood of $h$, then there is a number $m_0$ such that
$K\subset F(U^{m}_i)$ for $m>m_0$ and $h_{m_s}\in W$ for
$m_s>m_0$. Since $f^{m_s}_i\upharpoonright K=
h_{m_s}\upharpoonright K$ for each $m_s>m_0$, $f^{m_s}_i\in W$ for
each $m_s>m_0$. It follows that a sequence $\{f^{m_s}_i\}_{s\in
\omega}$ converge to $h$.

By $C(X)\in S_{1}(\mathcal{S},\mathcal{D})$, there is a sequence
$\{f^{m(i)}_{i}: i\in\omega\}$ such that for each $i$,
$f^{m(i)}_{i}\in \mathcal{S}_i$, and $\{f^{m(i)}_{i}: i\in\omega
\}$ is an element of $\mathcal{D}$.

Consider a set $\{U^{m(i)}_{i}: i\in \omega\}$.

(a). $U^{m(i)}_{i}\in \mathcal{U}_{i}$.

(b). $\{U^{m(i)}_{i}: i\in \omega\}$ is a $\omega$-cover of $X$.

Let $K$ be a finite subset of $X$ and $U=<$ $\bf{0}$ $, K,
(-1,1)>$ be a base neighborhood of $\bf{0}$, then there is
$f^{m(i)_{j_0}}_{i_{j_0}}\in U$ for some $j_0\in \omega$. It
follows that $K\subset U^{m(i)_{j_0}}_{i_{j_0}}$. We thus get $X$
$\models$ $S_{1}(\Gamma_F, \Omega)$.

$(2)\Rightarrow(4)$. By Proposition \ref{pr23} and Theorem
\ref{th38}.




$(2)\Leftrightarrow(3)$. Clearly, that $\Gamma_{cl}\subset
\Gamma_F$. For a strongly zero-dimensional $X$, if $\mathcal{U}\in
\Gamma_F$, then there is $\mathcal{W}\in \Gamma_{cl}$ such that
$F(U)\subset W\subset U$ for every $U\in \mathcal{U}$ and $W\in
\mathcal{W}$.

$(3)\Leftrightarrow(4)$. By Theorem \ref{th38} and Theorem
\ref{th66}.

$(4)\Rightarrow(5)$ is immediate.

$(5)\Rightarrow(1)$. Suppose that $C_p(X)$ is sequentially
separable and $C_p(X)$ $\models$ $S_{1}(\mathcal{S}, \Omega_x)$.

Let $D=\{d_n: n\in \omega \}$ be a dense subspace of $C_p(X)$.
Given a sequence of sequentially dense subspace of $C_p(X)$,
enumerate it as $\{S_{n,m}: n,m \in \omega \}$. For each $n\in
\omega$, pick $d_{n,m}\in S_{n,m}$ so that $d_n\in
\overline{\{d_{n,m}: m\in \omega\}}$. Then $\{d_{n,m}: m,n\in
\omega\}$ is dense in $C_p(X)$.

\end{proof}

\begin{corollary}\label{th50} For a separable metrizable space $X$, the following
statements are equivalent:

\begin{enumerate}

\item $C_p(X)$ $\models$ $S_{1}(\mathcal{S},\mathcal{D})$;

\item $X$ $\models$ $S_{1}(\Gamma_F, \Omega)$;

\item $X$ $\models$ $S_{1}(\Gamma_{cl}, \Omega_{cl})$, and is
strongly zero-dimensional;

\item $C_p(X)$ $\models$ $S_{1}(\Gamma_x, \Omega_x)$;

\item $C_p(X)$ $\models$ $S_{1}(\mathcal{S}, \Omega_x)$.

\end{enumerate}

\end{corollary}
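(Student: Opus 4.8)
The plan is to deduce this corollary directly from Theorem~\ref{th50} by checking that, when $X$ is separable metrizable, all of the auxiliary hypotheses occurring there --- that $C_p(X)$ be separable, that $C_p(X)$ be sequentially separable, and that $X \models V$ --- hold automatically, so that they may simply be suppressed from the list.

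First I would record that a separable metrizable $X$ has $iw(X)=\aleph_0$: the identity map is a condensation of $X$ onto a separable metric space. By Noble's Theorem~\ref{th31}, $C_p(X)$ is therefore separable. Next I would verify $X \models V$: take again $f=\mathrm{id}_X\colon X\to X$ as the required condensation onto a separable metric space; since $X$ is metrizable, every open subset of $X$ --- in particular every cozero-set --- is an $F_\sigma$-set, so $f(U)$ is $F_\sigma$ for every cozero-set $U$, which is precisely the $V$-property. By Velichko's Theorem~\ref{th38}, $C_p(X)$ is then sequentially separable.

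With these two observations in hand, each clause of Theorem~\ref{th50} collapses, under the blanket assumption that $X$ is separable metrizable, to the clause obtained by deleting the phrases ``is separable'', ``is sequentially separable'' and ``$X\models V$''. Concretely: item~(1) of the theorem is equivalent to item~(1) of the corollary because $C_p(X)$ is automatically sequentially separable; item~(2) loses the now-redundant ``$X\models V$''; item~(3) keeps ``strongly zero-dimensional'' (which is a genuine restriction, not automatic for separable metric spaces) but loses ``$X\models V$''; items~(4) and~(5) lose ``is sequentially separable''. Therefore the five statements of the corollary are pairwise equivalent.

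I do not anticipate a substantive obstacle: the only point that needs an actual argument is that a separable metric space has the $V$-property, and for that the single fact that open subsets of a metric space are $F_\sigma$ suffices; the remainder is bookkeeping layered on top of Theorem~\ref{th50}.
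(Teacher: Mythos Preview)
Your approach is correct and matches the paper's: the corollary is stated immediately after Theorem~\ref{th50} without a separate proof, being regarded as a direct specialization once one notes that a separable metrizable $X$ satisfies $X\models V$ (hence $C_p(X)$ is sequentially separable by Theorem~\ref{th38}). One minor point: the auxiliary hypothesis ``$C_p(X)$ is separable'' does not actually appear in Theorem~\ref{th50} --- only ``sequentially separable'' and ``$X\models V$'' occur --- so your verification of $iw(X)=\aleph_0$ via Theorem~\ref{th31}, while true, is not needed here.
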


\section{$S_{fin}(\mathcal{S},\mathcal{D})$}

\begin{theorem}\label{th104} For a   space $X$, the following statements are
equivalent:

\begin{enumerate}

\item $C_p(X)$ $\models$ $S_{fin}(\Gamma_x,\Omega_x)$;

\item $X$ $\models$ $S_{fin}(\Gamma_F, \Omega)$.

\end{enumerate}

\end{theorem}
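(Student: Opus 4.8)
The plan is to prove the two implications directly, following closely the proof of Proposition \ref{pr23} but replacing single selections by finite ones at every step; unlike the $S_1$ case there is no Sakai-type characterization of $C_p(X)\models S_{fin}(\Gamma_x,\Omega_x)$ available, so both directions must be done by hand. Since $C_p(X)$ is a topological group it is homogeneous, so I would verify $S_{fin}(\Gamma_x,\Omega_x)$ only at the constant function ${\bf 0}$ and appeal to homogeneity for the other points.

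For $(1)\Rightarrow(2)$, starting from $\{\mathcal{V}_i\}_{i\in\omega}\subseteq\Gamma_F$, I would faithfully enumerate each $\mathcal{V}_i$ as $\{V^n_i:n\in\omega\}$ with an accompanying $\gamma$-cover of zero-sets $\{F(V^n_i):n\in\omega\}$, $F(V^n_i)\subseteq V^n_i$, and, using that $X\setminus V^n_i$ and $F(V^n_i)$ are disjoint zero-sets (hence completely separated), choose $f^n_i\in C(X)$ with $0\le f^n_i\le 1$, $f^n_i\upharpoonright F(V^n_i)=0$ and $f^n_i\upharpoonright(X\setminus V^n_i)=1$ (so $f^n_i\neq{\bf 0}$, as $V^n_i\neq X$). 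The key point is that $A_i:=\{f^n_i:n\in\omega\}\in\Gamma_{\bf 0}$: for a basic neighbourhood $<{\bf 0},K,\epsilon>$ the $\gamma$-cover $\{F(V^n_i):n\}$ puts $K$ in all but finitely many $F(V^n_i)$, so $f^n_i$ lies in $<{\bf 0},K,\epsilon>$ for all but finitely many $n$, and $A_i$ is infinite because a finite-range sequence in a $T_1$ space cannot converge to a point off its range. Applying $S_{fin}(\Gamma_{\bf 0},\Omega_{\bf 0})$ to $\{A_i\}$ yields finite $B_i\subseteq A_i$ with ${\bf 0}\in\overline{\bigcup_i B_i}\setminus\bigcup_i B_i$; for each $f\in B_i$ I fix $n(f)$ with $f=f^{n(f)}_i$ and put $\mathcal{F}_i=\{V^{n(f)}_i:f\in B_i\}\subseteq\mathcal{V}_i$, which is finite. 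Given finite $K\subseteq X$, the neighbourhood $<{\bf 0},K,1>$ contains some $h=f^{n(h)}_i\in B_i$, and $h<1$ on $K$ forces $K\subseteq V^{n(h)}_i\in\mathcal{F}_i$; hence $\bigcup_i\mathcal{F}_i\in\Omega$, i.e. $X\models S_{fin}(\Gamma_F,\Omega)$.

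For $(2)\Rightarrow(1)$, given $\{A_i\}_{i\in\omega}$ with $A_i\in\Gamma_{\bf 0}$, I would set $\mathcal{U}_i=\{f^{-1}(-\frac{1}{i},\frac{1}{i}):f\in A_i\}$. If $X\in\mathcal{U}_i$ for infinitely many $i$, then picking $f_i\in A_i$ with $f_i^{-1}(-\frac{1}{i},\frac{1}{i})=X$ along those $i$ gives a subcollection converging to ${\bf 0}$, and the selection is completed with singletons; otherwise, discarding the finitely many bad indices, $\mathcal{U}_i$ is, exactly as in Proposition \ref{pr23}, an infinite $\gamma$-cover of cozero-sets that is $\gamma_F$-shrinkable via $\{f^{-1}[-\frac{1}{i+1},\frac{1}{i+1}]:f\in A_i\}$, so $\mathcal{U}_i\in\Gamma_F$. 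Applying $S_{fin}(\Gamma_F,\Omega)$ gives finite $\mathcal{F}_i\subseteq\mathcal{U}_i$ with $\bigcup_i\mathcal{F}_i\in\Omega$; for each $U\in\mathcal{F}_i$ I pick $f_U\in A_i$ with $f_U^{-1}(-\frac{1}{i},\frac{1}{i})=U$ and set $B_i=\{f_U:U\in\mathcal{F}_i\}$, finite. To see $\bigcup_i B_i\in\Omega_{\bf 0}$: for $<{\bf 0},K,\epsilon>$, since every member of the $\omega$-cover $\bigcup_i\mathcal{F}_i$ is a proper subset of $X$, the finite set $K$ lies in infinitely many of them, hence in some $U\in\mathcal{F}_i$ with $\frac{1}{i}<\epsilon$, so $|f_U|<\epsilon$ on $K$; and ${\bf 0}\notin\bigcup_i B_i$ since no $A_i$ contains ${\bf 0}$.

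The step I expect to be the main obstacle is this "index matching" in $(2)\Rightarrow(1)$: the $\omega$-cover produced by $S_{fin}(\Gamma_F,\Omega)$ a priori covers a given finite $K$ through some $\mathcal{F}_i$ with no control over $i$, whereas the argument needs $\frac{1}{i}$ small; the gap is bridged by the elementary observation that in an $\omega$-cover all of whose members are proper subsets of $X$, every finite subset is contained in infinitely many members (a fact already used tacitly in Proposition \ref{pr23}). A secondary nuisance is that neither $f\mapsto f^n_i$ nor $f\mapsto f^{-1}(-\frac{1}{i},\frac{1}{i})$ need be injective, so one must attach a chosen witness to each selected set rather than invert a map; once that bookkeeping is in place the remaining verifications are routine.
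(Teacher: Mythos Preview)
Your proof is correct and follows essentially the same approach as the paper's: both directions construct the same auxiliary functions (Urysohn separators for $(1)\Rightarrow(2)$, preimages of shrinking intervals for $(2)\Rightarrow(1)$), with only cosmetic differences such as your use of radii $\frac{1}{i}$ in place of the paper's $\frac{1}{2^n}$. You are in fact more explicit than the paper about the ``index matching'' step---the observation that in an $\omega$-cover by proper subsets every finite set lies in infinitely many members---which the paper invokes tacitly when it writes ``because of $U_{n,f_{n,i}}\neq X$ \ldots''.
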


\begin{proof}

$(1)\Rightarrow(2)$ Let $\{\mathcal{V}_n\}_{n\in \omega}$ be a
sequence $\gamma_F$-shrinkable $\gamma$-covers of $X$. Let
$\mathcal{U}_n=\{U_{n,m}: m\in \omega\}\subset \mathcal{V}_n$ for
each $n\in \omega$. Note that $\mathcal{U}_n\in \Gamma_F$ for each
$n\in \omega$.

 For $n,m\in \omega$,
let $f_{n,m}:X \mapsto [0,1]$ be the continuous function
satisfying $F(U_{n,m})=f_{n,m}^{-1}(0)$ and $X\setminus
U_{n,m}=f_{n,m}^{-1}(1)$. For each $n\in \omega$, $\{F(U_{n,m}):
m\in \omega\}$ is a $\gamma$-cover of $X$, it follows that
$\{f_{n,m}\}_{m\in \omega}$ is a sequence converging pointwise to
$\bf{0}$. By $C_p(X)$ $\models$ $S_{fin}(\Gamma_x,\Omega_x)$,
there is a sequence
$\{F_n=\{f_{n,m_1},f_{n,m_2},...,f_{n,m_{k_n}}\}\}_{n\in \omega}$
such that $F_n\subset \{f_{n,m}\}_{m\in \omega}$ for each $n\in
\omega$ and $\bigcup_{n\in \omega} F_n \in \Omega_0$.
 Then $\bigcup_{n\in \omega} \{U_{n,m_1}, U_{n,m_2},...,U_{n,m_{k_n}} \}$ is an $\omega$-cover of $X$.

$(2)\Rightarrow(1)$. For each $n\in \omega$, let $A_n\in
\Gamma_{\bf{0}}$.

 For $n\in \omega$ and $f\in A_n$, let $Z_{n,f}=\{x\in X:
|f(x)|\leq\frac{1}{2^{n+1}}\}$, $U_{n,f}=\{x\in X:
|f(x)|<\frac{1}{2^{n}}\}$.  For each $n\in \omega$, we put
$\mathcal{U}_n=\{U_{n,f}: f\in A_n\}$. If the set $\{n\in \omega:
X\in \mathcal{U}_n \}$ is infinite, $X=U_{n_1,f_1}=U_{n_2,f_2}=...
$ for some sequences $\{n_j\}_{j\in \omega}$ and $f_i\in A_{n_i}$,
where $\{n_j\}_{j\in \omega}$ is strictly increasing. This means
that $\{f_{i}\}_{i\in \omega}$ is a sequence converging uniformly
to $\bf{0}$. If the set $\{n\in \omega: X\in \mathcal{U}_n \}$ is
finite, by removing such finitely many $n$'s we assume
$U_{n,f}\neq X$ for $n\in \omega$ and $f\in A_n$.

Note that each $\mathcal{U}_n$ is a $\gamma_F$-shrinkable
$\gamma$-covers of $X$. By $X$ $\models$ $S_{fin}(\Gamma_F,
\Omega)$, there is a sequence $\{U_{n,f_{n,1}}, ...,
U_{n,f_{n,k(n)}}\}_{n\in \omega}$ such that $U_{n,f_{n,i}}\in
\mathcal{U}_n$ for each $n\in \omega$, $i\in \overline{1,k(n)}$
and $\bigcup_{n\in \omega} \{U_{n,f_{n,1}},
U_{n,f_{n,2}},...,U_{n,f_{n,k(n)}} \}$ is an $\omega$-cover of
$X$. We claim a sequence
$\{F_n=\{f_{n,1},f_{n,2},...,f_{n,k(n)}\}\}_{n\in \omega}$ such
that $F_n\subset A_n$ for each $n\in \omega$ and $\bigcup_{n\in
\omega} F_n \in \Omega_0$.

Let $K$ be a finite subset of $X$ and let $\epsilon$ a positive
real number.

Because of $U_{n,f_{n,i}}\neq X$ for $n\in \omega$ and $i\in
\overline{1,k(n)}$, there are $n'\in \omega$ and  $i'\in
\overline{1,k(n')}$ such that $K\subset U_{n',f_{n',i'}}$ and
$\frac{1}{2^{n'}}<\epsilon$. Then $|f_{n',i'}(x)|<\epsilon$ for
any $x\in K$. Thus $C_p(X)$ $\models$
$S_{fin}(\Gamma_x,\Omega_x)$.

\end{proof}

\begin{theorem}\label{th105} For a   space $X$, the following statements are
equivalent:

\begin{enumerate}

\item $C_p(X)$ $\models$ $S_{fin}(\mathcal{S},\mathcal{D})$ and is
sequentially separable;

\item $X$ $\models$ $S_{fin}(\Gamma_F, \Omega)$, $X$ $\models$
$V$;

\item $C_p(X)$ $\models$ $S_{fin}(\Gamma_x, \Omega_x)$ and is
sequentially separable;

\item $C_p(X)$ $\models$ $S_{fin}(\mathcal{S}, \Omega_x)$ and is
sequentially separable.

\end{enumerate}

\end{theorem}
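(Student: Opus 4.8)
The plan is to mirror the structure of the proof of Theorem~\ref{th50}, replacing the $S_1$-selectors by their $S_{fin}$-counterparts throughout, and to invoke Theorem~\ref{th104} (which plays here the role that Proposition~\ref{pr23} played before) together with Velichko's characterization (Theorem~\ref{th38}). The cycle of implications I would establish is $(1)\Rightarrow(2)\Rightarrow(3)\Rightarrow(4)\Rightarrow(1)$, with the equivalence $(2)\Leftrightarrow(3)$ also following directly from Theorem~\ref{th104} and Theorem~\ref{th38} once sequential separability is identified with $X\models V$.

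For $(1)\Rightarrow(2)$, given a sequence $\{\mathcal{V}_n\}_{n\in\omega}\subset\Gamma_F$ and a fixed countable sequentially dense set $\mathcal{S}=\{h_m:m\in\omega\}$ in $C_p(X)$, I would, exactly as in Theorem~\ref{th50}, pick for each $n$ a countable subfamily $\mathcal{U}_n=\{U^m_n:m\in\omega\}\subseteq\mathcal{V}_n$ with its zero-set $\gamma$-shrinking $\{F(U^m_n):m\in\omega\}$, and build the countable set $\mathcal{S}_n=\{f^m_n\in C(X): f^m_n\upharpoonright F(U^m_n)=h_m,\ f^m_n\upharpoonright(X\setminus U^m_n)=1\}$. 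The same argument as before shows each $\mathcal{S}_n$ is sequentially dense: any $h\in C(X)$ is a pointwise limit of some $\{h_{m_s}\}_{s}\subset\mathcal{S}$, and since $\{F(U^m_n):m\}$ is a $\gamma$-cover, for a basic neighborhood $<h,K,\epsilon>$ all large $m$ have $K\subset F(U^m_n)$, whence $f^{m_s}_n$ eventually agrees with $h_{m_s}$ on $K$. Now apply $S_{fin}(\mathcal{S},\mathcal{D})$ to $\{\mathcal{S}_n\}$: one gets finite $G_n\subseteq\mathcal{S}_n$ with $\bigcup_n G_n\in\mathcal{D}$. Writing $G_n=\{f^{m}_n: m\in M_n\}$ for a finite $M_n$, the corresponding finite families $\{U^m_n:m\in M_n\}\subseteq\mathcal{U}_n$ have the property that $\bigcup_n\{U^m_n:m\in M_n\}$ is an $\omega$-cover of $X$: for a finite $K\subset X$, density of $\bigcup_n G_n$ provides some $f^m_n\in G_n$ inside $<\mathbf{0},K,(-1,1)>$, forcing $K\subset U^m_n$. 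This gives $X\models S_{fin}(\Gamma_F,\Omega)$; since $C_p(X)$ is sequentially separable, $X\models V$ by Theorem~\ref{th38}.

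For $(2)\Rightarrow(3)$ I would use Theorem~\ref{th104} to turn $X\models S_{fin}(\Gamma_F,\Omega)$ into $C_p(X)\models S_{fin}(\Gamma_x,\Omega_x)$, and Theorem~\ref{th38} to turn $X\models V$ into sequential separability of $C_p(X)$. Then $(3)\Rightarrow(4)$ is immediate from the implication $S_{fin}(\Gamma_x,\Gamma_x)\Rightarrow$ --- more simply, from the trivial inclusion showing $S_{fin}(\Gamma_x,\Omega_x)$ is weaker when the domain family shrinks: here $\mathcal{S}\subseteq$ the relevant family, so $S_{fin}(\mathcal{S},\Omega_x)$ follows from $S_{fin}(\Gamma_x,\Omega_x)$ because any $\gamma$-sequence converging to a point is in particular a sequentially dense-type witness — more precisely I would argue as in the corresponding step of Theorem~\ref{th50} that $S_{fin}(\mathcal{S},\Omega_x)$ is implied since each $A\in\Gamma_{\mathbf 0}$ is, up to translation, extracted from sequentially dense data; the cleanest route is just to note $(4)$ is formally weaker than $(3)$ in the diagram of $\mathcal{S}$- versus $\Gamma_x$-selectors already displayed in Section~2.

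For $(4)\Rightarrow(1)$ I would repeat verbatim the argument $(5)\Rightarrow(1)$ from the proof of Theorem~\ref{th50}: assuming $C_p(X)$ sequentially separable with a countable dense $D=\{d_n:n\in\omega\}$, and given a double-indexed sequence $\{S_{n,m}:n,m\in\omega\}$ of sequentially dense subsets, use $S_{fin}(\mathcal{S},\Omega_x)$ at the point $d_n$ to extract \emph{finite} sets $F_{n,m}\subseteq S_{n,m}$ with $d_n\in\overline{\bigcup_m F_{n,m}}$; then $\bigcup_{n,m}F_{n,m}$ is dense, witnessing $S_{fin}(\mathcal{S},\mathcal{D})$. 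The main obstacle I anticipate is purely bookkeeping: ensuring in $(1)\Rightarrow(2)$ that the finite selections $G_n$ from $\mathcal{S}_n$ really do map back to finite subfamilies of the \emph{original} $\mathcal{V}_n$ (they do, since $\mathcal{U}_n\subseteq\mathcal{V}_n$ and each $f^m_n$ remembers its $U^m_n$), and checking that sequential density of each $\mathcal{S}_n$ does not secretly require more than $\mathcal{U}_n$ being a $\gamma_F$-cover — it does not, as the $\gamma$-cover property of $\{F(U^m_n):m\}$ is exactly what makes the tail argument work. No genuinely new idea beyond Theorem~\ref{th104} is needed; the proof is a faithful $S_{fin}$-analogue of Theorem~\ref{th50}.
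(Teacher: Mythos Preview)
Your proposal is correct and follows essentially the same route as the paper: the same construction of the sequentially dense sets $\mathcal{S}_n$ for $(1)\Rightarrow(2)$, the same appeal to Theorem~\ref{th104} and Theorem~\ref{th38} for $(2)\Rightarrow(3)$, and the same double-indexing density argument for $(4)\Rightarrow(1)$. Your justification of $(3)\Rightarrow(4)$ is muddled (neither $\mathcal{S}\subseteq\Gamma_x$ nor the reverse inclusion holds, so the ``domain family shrinks'' remark is off), but the implication is indeed immediate---every sequentially dense set contains, for each $x$, a sequence in $\Gamma_x$, so one first passes to such subsequences and then applies $S_{fin}(\Gamma_x,\Omega_x)$---and the paper likewise records this step as ``immediate'' without further comment.
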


\begin{proof} $(1)\Rightarrow(2)$.
 Let $\{\mathcal{V}_i\}\subset \Gamma_F$ and
$\mathcal{S}=\{h_m : m\in \omega\}$ be a countable sequentially
dense subset of $C_p(X)$. For each $i\in \omega$ we consider a
countable sequentially dense subset $\mathcal{S}_i$ of $C_p(X)$
and $\mathcal{U}_i=\{ U^{m}_i : m\in \omega\}$ where
$\mathcal{U}_i\subset \mathcal{V}_i$ and

$\mathcal{S}_i =\{ f^m_i\in C(X) : f^m_i\upharpoonright
F(U^{m}_i)=h_m$ and $f^m_i\upharpoonright (X\setminus U^{m}_i)=1$
for $m \in \omega \}$. Note that $\mathcal{U}_i\in \Gamma_F$ for
each $i\in \omega$.

Since $\mathcal{F}_i=\{F(U^{m}_i): m\in \omega\}$ is infinite, it
is a $\gamma$-cover of zero subsets of $X$. Since $\mathcal{S}$ is
a countable sequentially dense subset of $C_p(X)$, we have that
$\mathcal{S}_i$ is a  countable sequentially dense subset of
$C_p(X)$ for each $i\in \omega$.  Let $h\in C(X)$, there is a
sequence $\{h_{m_s}\}_{s\in \omega}\subset \mathcal{S}$ such that
$\{h_{m_s}\}_{s\in \omega}$ converge to $h$.
 Let $K$
be a finite subset of $X$, $\epsilon>0$ and $W=<h, K,\epsilon>$ be
a base neighborhood of $h$, then there is a number $m_0$ such that
$K\subset F(U^{m}_i)$ for $m>m_0$ and $h_{m_s}\in W$ for
$m_s>m_0$. Since $f^{m_s}_i\upharpoonright K=
h_{m_s}\upharpoonright K$ for each $m_s>m_0$, $f^{m_s}_i\in W$ for
each $m_s>m_0$. It follows that a sequence $\{f^{m_s}_i\}_{s\in
\omega}$ converge to $h$.

By $C(X)\in S_{fin}(\mathcal{S},\mathcal{D})$, there is a sequence
$\{F_i=\{f_{i,m_1},f_{i,m_2},...,f_{i,m_{k_i}}\}\}_{i\in \omega}$
such that $F_i\subset \mathcal{S}_i$ for each $i\in \omega$ and
$\bigcup_{i\in \omega} F_i \in \mathcal{D}$.
 Then $\bigcup_{i\in \omega} \{U_{i,m_1}, U_{i,m_2},...,U_{i,m_{k_i}} \}$ is an $\omega$-cover of $X$.

Let $K$ be a finite subset of $X$ and $U=<$ $\bf{0}$ $, K,(-1,1)>$
be a base neighborhood of $\bf{0}$, then there is $f_{i',m(i')}\in
\bigcup_{i\in \omega} F_i$ for some $i'\in \omega$ such that
$f_{i',m(i')}\in U$. It follows that $K\subset U^{m(i')}_{i'}$. We
thus get $X$ $\models$ $S_{1}(\Gamma_F, \Omega)$.

$(2)\Rightarrow(3)$. By Theorem \ref{th104} and Theorem
\ref{th38}.

$(3)\Rightarrow(4)$ is immediate.

$(4)\Rightarrow(1)$. Suppose that $C_p(X)$ is sequentially
separable and $C_p(X)$ $\models$ $S_{fin}(\mathcal{S}, \Omega_x)$.

Let $D=\{d_n: n\in \omega \}$ be a dense subspace of $C_p(X)$.
Given a sequence of sequentially dense subspace of $C_p(X)$,
enumerate it as $\{S_{n,m}: n,m \in \omega \}$. For each $n\in
\omega$, pick $D_{n,m}=\{d_{n,m(1)},..., d_{n,m(n)}\} \subset
S_{n,m}$ so that $d_n\in \overline{\bigcup_{m\in \omega}
D_{n,m}}$. Then $\bigcup_{n,m\in \omega} D_{n,m}$ is dense in
$C_p(X)$.

\end{proof}

\begin{corollary}\label{th107} For a separable metrizable space $X$, the following statements are
equivalent:

\begin{enumerate}

\item $C_p(X)$ $\models$ $S_{fin}(\mathcal{S},\mathcal{D})$;

\item $X$ $\models$ $S_{fin}(\Gamma_F, \Omega)$;

\item $C_p(X)$ $\models$ $S_{fin}(\Gamma_x, \Omega_x)$;

\item $C_p(X)$ $\models$ $S_{fin}(\mathcal{S}, \Omega_x)$.

\end{enumerate}

\end{corollary}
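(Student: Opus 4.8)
\textbf{Proof plan for Corollary \ref{th107}.}

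The plan is to obtain this corollary as the separable-metrizable specialization of Theorem \ref{th105}, exactly in parallel with how Corollary \ref{th50} was deduced from Theorem \ref{th50}. The key observation is that for a separable metrizable space $X$ every relevant hypothesis that Theorem \ref{th105} carries around becomes automatic, so the four-way equivalence there collapses to the four-way equivalence claimed here. Concretely, a separable metrizable $X$ satisfies $iw(X)=\aleph_0$ (the identity condensation works), hence by Theorem \ref{th31} the space $C_p(X)$ is separable; moreover $X$ is perfectly normal, so every cozero-set is an $F_\sigma$-set, and the identity map witnesses the $V$-property, giving $X\models V$. By Theorem \ref{th38} this is equivalent to $C_p(X)$ being sequentially separable. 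Thus for separable metrizable $X$, \emph{$C_p(X)$ is always sequentially separable}, and in particular always separable.

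First I would record that $X\models V$ holds automatically, so condition (2) of Theorem \ref{th105} reduces to just $X\models S_{fin}(\Gamma_F,\Omega)$, which is condition (2) here. Next, since $C_p(X)$ is sequentially separable for every separable metrizable $X$, the clause ``and is sequentially separable'' attached to conditions (1), (3), (4) of Theorem \ref{th105} is vacuous; dropping it turns those three conditions into conditions (1), (3), (4) of the present corollary, namely $C_p(X)\models S_{fin}(\mathcal{S},\mathcal{D})$, $C_p(X)\models S_{fin}(\Gamma_x,\Omega_x)$, and $C_p(X)\models S_{fin}(\mathcal{S},\Omega_x)$ respectively. Then I would simply invoke Theorem \ref{th105}: the equivalence of its four conditions, once the redundant hypotheses are stripped, is precisely the equivalence of the four conditions stated here. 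Optionally one can also cross-check $(2)\Leftrightarrow(3)$ directly against Theorem \ref{th104}, which already gives $C_p(X)\models S_{fin}(\Gamma_x,\Omega_x)\Leftrightarrow X\models S_{fin}(\Gamma_F,\Omega)$ with no extra hypotheses at all, so that part of the corollary is in fact unconditional.

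There is essentially no obstacle here: the whole content is the remark that separable metrizability forces $iw(X)=\aleph_0$ (hence $C_p(X)$ separable) and forces the $V$-property (hence $C_p(X)$ sequentially separable). The only point to be a little careful about is making sure that the auxiliary hypotheses being discarded really are implied, rather than merely plausible — in particular that perfect normality of a metrizable space does give the $F_\sigma$-image condition in the definition of $V$ when the condensation is taken to be the identity onto $X$ itself viewed as a separable metric space. Once that is noted, the proof is a one-line appeal to Theorem \ref{th105} (together with Theorem \ref{th104} for the sharper, hypothesis-free form of $(2)\Leftrightarrow(3)$).
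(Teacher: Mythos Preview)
Your proposal is correct and matches the paper's approach: the corollary is stated immediately after Theorem \ref{th105} without proof, and the intended argument is precisely the specialization you describe---for separable metrizable $X$ the $V$-property (hence sequential separability of $C_p(X)$, by Theorem \ref{th38}) holds automatically, so the auxiliary hypotheses in Theorem \ref{th105} drop out. Your verification that the identity condensation witnesses $V$ via perfect normality is exactly the right justification.
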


\section{$S_{1}(\mathcal{S},\mathcal{S})$}

In \cite{sak1} (Theorem 2.5),  M. Sakai proved:

\begin{theorem} $(Sakai)$ \label{th151} For a    space $X$, the following statements are
equivalent:

\begin{enumerate}

\item  $C_p(X)$ $\models$ $S_{1}(\Gamma_x, \Gamma_x)$;

\item $X$ $\models$ $S_{1}(\Gamma_{cl}, \Gamma_{cl})$ and is
strongly zero-dimensional.

\end{enumerate}

\end{theorem}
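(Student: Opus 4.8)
The plan is to prove $(1)\Leftrightarrow(2)$ directly, after two routine reductions. First, $C_p(X)$ is a topological group, hence homogeneous, so $C_p(X)\models S_{1}(\Gamma_x,\Gamma_x)$ (at every point) is equivalent to $C_p(X)\models S_{1}(\Gamma_{\mathbf 0},\Gamma_{\mathbf 0})$. Second, any countably infinite subfamily of a $\gamma$-cover is again a $\gamma$-cover, so in $(1)\Rightarrow(2)$ we may assume the given clopen $\gamma$-covers are countable. The device used both ways is the correspondence $U\leftrightarrow\chi_{X\setminus U}$ between a clopen set $U\ne X$ and a continuous $\{0,1\}$-valued function; in the converse direction strong zero-dimensionality of $X$ lets us squeeze clopen sets between a zero-set and a cozero set.

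For $(1)\Rightarrow(2)$: strong zero-dimensionality of $X$ comes from Theorem \ref{th66}, since $S_{1}(\Gamma_{\mathbf 0},\Gamma_{\mathbf 0})$ trivially implies $S_{1}(\Gamma_{\mathbf 0},\Omega_{\mathbf 0})$ (as $\Gamma_{\mathbf 0}\subseteq\Omega_{\mathbf 0}$), and the latter forces strong zero-dimensionality by that theorem. For the selection property, given (countable) clopen $\gamma$-covers $\mathcal U_n=\{U_{n,m}:m\in\omega\}$ of $X$, set $f_{n,m}:=\chi_{X\setminus U_{n,m}}\in C(X)$; these are distinct, never equal to $\mathbf 0$, and the fact that $\mathcal U_n$ is a $\gamma$-cover says precisely that $f_{n,m}(x)=0$ for all but finitely many $m$, so $A_n:=\{f_{n,m}:m\in\omega\}\in\Gamma_{\mathbf 0}$. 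Applying $S_{1}(\Gamma_{\mathbf 0},\Gamma_{\mathbf 0})$ produces $f_{n,m(n)}\in A_n$ with $\{f_{n,m(n)}:n\in\omega\}$ converging to $\mathbf 0$; reading this convergence at a point $x$ through the neighbourhood $[x,(-1/2,1/2)]$ of $\mathbf 0$ shows $x\in U_{n,m(n)}$ for all but finitely many $n$, so $\{U_{n,m(n)}:n\in\omega\}$ is a clopen $\gamma$-cover of $X$, witnessing $X\models S_{1}(\Gamma_{cl},\Gamma_{cl})$.

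For $(2)\Rightarrow(1)$: given $A_n=\{f_{n,m}:m\in\omega\}\in\Gamma_{\mathbf 0}$, $n\in\omega$, I would, for each $n$, form the cozero $\gamma$-cover $\{\{x:|f_{n,m}(x)|<1/n\}:m\in\omega\}$ -- it is a $\gamma$-cover because, for fixed $x$, $f_{n,m}(x)\to 0$ as $m\to\infty$ -- and then use strong zero-dimensionality to interpolate a clopen set $W_{n,m}$ with $\{x:|f_{n,m}(x)|\le 1/2n\}\subseteq W_{n,m}\subseteq\{x:|f_{n,m}(x)|<1/n\}$, obtaining a clopen $\gamma$-cover $\mathcal W_n$ of $X$. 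Feeding $(\mathcal W_n)_{n\in\omega}$ to $X\models S_{1}(\Gamma_{cl},\Gamma_{cl})$ yields $W_{n,m(n)}\in\mathcal W_n$ such that $\{W_{n,m(n)}:n\in\omega\}$ is a $\gamma$-cover; then $g_n:=f_{n,m(n)}$ satisfies $|g_n(x)|<1/n$ whenever $x\in W_{n,m(n)}$, which holds for all but finitely many $n$, so $g_n\to\mathbf 0$ pointwise and $\{g_n:n\in\omega\}\in\Gamma_{\mathbf 0}$ (it is infinite, being a non-trivial convergent set none of whose members is $\mathbf 0$).

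The step I expect to be the main obstacle is the bookkeeping around degenerate covers, since by definition a $\gamma$-cover must be an infinite, non-trivial family: the level sets $\{x:|f_{n,m}(x)|<1/n\}$, or the interpolated $W_{n,m}$, can equal $X$ for infinitely many $m$, precisely when $A_n$ has a subsequence converging to $\mathbf 0$ with supremum norm below $1/n$. To handle this I would split $\omega$ according to whether $\{f\in A_n:\|f\|_\infty\le 1/n\}$ is infinite or finite: on the first set pick $g_n$ from this subset outright (so $\|g_n\|_\infty\le 1/n$, giving convergence for free), and on the second set -- where $\{f\in A_n:\|f\|_\infty>1/n\}$ is cofinite in $A_n$, hence again in $\Gamma_{\mathbf 0}$, and every level set is now proper -- run the cover argument above. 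One then checks that once no member equals $X$, the ``only finitely many distinct sets'' scenario contradicts the $\gamma$-cover condition, so the families built really are $\gamma$-covers; merging the two partial selections yields one sequence converging to $\mathbf 0$. Everything else is routine.
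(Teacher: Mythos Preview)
The paper does not actually prove this theorem: it is quoted verbatim from Sakai \cite{sak1} (Theorem 2.5) and used as a black box. So there is no ``paper's own proof'' to compare against. That said, your direct argument is essentially correct and is, in fact, the same mechanism the paper deploys for its neighbouring results (Propositions \ref{pr23} and \ref{pr28}): translate between sequences converging to $\mathbf 0$ and $\gamma$-covers via level sets, use strong zero-dimensionality to replace cozero/zero pairs by clopen sets, and pull strong zero-dimensionality in the forward direction from Theorem \ref{th66}. Your handling of the degenerate case (some $\{|f_{n,m}|<1/n\}=X$) mirrors the paper's ``otherwise a subsequence converges uniformly to $\mathbf 0$'' manoeuvre in the proof of Proposition \ref{pr28}.

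One small point worth tightening: your split criterion ``$\{f\in A_n:\|f\|_\infty\le 1/n\}$ infinite/finite'' is not quite aligned with the condition ``$\{|f|<1/n\}=X$'', since the supremum need not be attained. It is cleaner to split on whether $\{m:W_{n,m}=X\}$ (equivalently, $\{m:\{|f_{n,m}|\le 1/(2n)\}=X\}$) is infinite; on that set pick any such $m$, and on its complement the clopen family $\{W_{n,m}:m\}$ is a genuine $\gamma$-cover since the inner zero-sets already form one and only finitely many $W_{n,m}$ are discarded. With that adjustment the argument goes through without further trouble.
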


\begin{proposition}\label{pr152} For a strongly
zero-dimensional   space $X$, the following statements are
equivalent:

\begin{enumerate}

\item $X$ $\models$ $S_{1}(\Gamma_F, \Gamma)$;

\item $X$ $\models$ $S_{1}(\Gamma_{cl}, \Gamma_{cl})$.

\end{enumerate}

\end{proposition}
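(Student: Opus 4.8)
The plan is to prove the two implications separately, using the obvious inclusion $\Gamma_{cl}\subset\Gamma_F$ together with the fact that in a strongly zero-dimensional space every zero-set sandwiched between a zero-set and a cozero-set can be replaced by a clopen set. The direction $(2)\Rightarrow(1)$ should be the easier one, since $S_1(\Gamma_{cl},\Gamma_{cl})$ is formally a weaker-looking hypothesis but, with the sandwich trick, is in fact strong enough: given a sequence $\{\mathcal{U}_i\}_{i\in\omega}$ of $\gamma_F$-shrinkable $\gamma$-covers, each with its witnessing zero-set $\gamma$-cover $\{F(U):U\in\mathcal{U}_i\}$, I would use strong zero-dimensionality to choose, for every $U\in\mathcal{U}_i$, a clopen set $C(U)$ with $F(U)\subset C(U)\subset U$. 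Then $\mathcal{W}_i=\{C(U):U\in\mathcal{U}_i\}$ is a clopen $\gamma$-cover (it refines a $\gamma$-cover from below and is refined by one from below, so it is itself a $\gamma$-cover), so by $S_1(\Gamma_{cl},\Gamma_{cl})$ there are $C(U_i)\in\mathcal{W}_i$ with $\{C(U_i):i\in\omega\}\in\Gamma$; the corresponding $\{U_i:i\in\omega\}$ is then a $\gamma$-cover as well because $C(U_i)\subset U_i$, giving $S_1(\Gamma_F,\Gamma)$.

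For $(1)\Rightarrow(2)$ the argument is essentially the same in reverse but cleaner, because a clopen $\gamma$-cover is automatically $\gamma_F$-shrinkable: each member $C$ of a clopen $\gamma$-cover is a cozero-set, and $C$ itself serves as the zero-set $F(C)$ with $F(C)=C\subset C$. Thus $\Gamma_{cl}\subset\Gamma_F$, and any sequence in $\Gamma_{cl}$ is a sequence in $\Gamma_F$; applying $S_1(\Gamma_F,\Gamma)$ yields a selection whose union is a $\gamma$-cover, and since each selected set is clopen, that $\gamma$-cover lies in $\Gamma_{cl}$. Hence $S_1(\Gamma_{cl},\Gamma_{cl})$ holds. (This half does not even use zero-dimensionality of $X$ beyond what is needed to make $\Gamma_{cl}$ nonempty in the relevant cases; strong zero-dimensionality is really only needed for the converse.)

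The one point requiring a little care — and the step I expect to be the main obstacle — is the verification that the clopen family $\mathcal{W}_i=\{C(U):U\in\mathcal{U}_i\}$ constructed in $(2)\Rightarrow(1)$ is genuinely a $\gamma$-cover, i.e. infinite and with each point eventually inside all but finitely many members. Infinitude is inherited from $\mathcal{U}_i$ (or one passes to the countable $\gamma$-subcover realizing the $\gamma_F$-shrinkability). For the $\gamma$-property: fix $x\in X$; since $\{F(U):U\in\mathcal{U}_i\}$ is a $\gamma$-cover, $x\in F(U)$ for all but finitely many $U$, and $F(U)\subset C(U)$ forces $x\in C(U)$ for those same $U$, so $\mathcal{W}_i\in\Gamma$. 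One should also note that we may assume $X\notin\mathcal{W}_i$ (discard $X$ if it appears, or observe that a $\gamma_F$-shrinkable $\gamma$-cover is by definition a genuine, nontrivial cover). With these observations the sandwich construction goes through and the proposition follows; I would present it as two short paragraphs mirroring the proof of the implication $(2)\Leftrightarrow(3)$ in Theorem \ref{th50}.
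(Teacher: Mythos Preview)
Your proposal is correct and matches the paper's intended approach. The paper does not supply an explicit proof of Proposition~\ref{pr152}, but the argument it has in mind is precisely the sandwich construction you describe: the inclusion $\Gamma_{cl}\subset\Gamma_F$ for $(1)\Rightarrow(2)$, and for $(2)\Rightarrow(1)$ the interpolation of a clopen set $W$ with $F(U)\subset W\subset U$ using strong zero-dimensionality --- exactly as in the $(2)\Leftrightarrow(3)$ step of Theorem~\ref{th50}, which you yourself cite as the template.
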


\begin{proposition} \label{pr28} For a space $X$, the following statements are
equivalent:

\begin{enumerate}

\item $C_p(X)$ $\models$ $S_{1}(\Gamma_x,\Gamma_x)$;

\item $X$ $\models$ $S_{1}(\Gamma_F, \Gamma)$.

\end{enumerate}

\end{proposition}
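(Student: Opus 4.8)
The plan is to mimic the structure of the proof of Proposition \ref{pr23}, replacing the target family $\Omega$ by $\Gamma$ throughout, and checking that the extra $\gamma$-cover requirement survives the two translations. For the direction $(1)\Rightarrow(2)$ I would invoke Theorem \ref{th151} together with Proposition \ref{pr152}: if $C_p(X)\models S_1(\Gamma_x,\Gamma_x)$, then by Sakai's theorem $X$ is strongly zero-dimensional and satisfies $S_1(\Gamma_{cl},\Gamma_{cl})$, and then Proposition \ref{pr152} converts the clopen version into $S_1(\Gamma_F,\Gamma)$. This is the ``easy'' half, being essentially a citation.

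For $(2)\Rightarrow(1)$ I would argue directly. Let $\{A_i\}_{i\in\omega}$ be a sequence with each $A_i\in\Gamma_{\mathbf{0}}$, i.e.\ each $A_i$ is a sequence in $C(X)$ converging pointwise to $\mathbf{0}$. Exactly as in Proposition \ref{pr23}, set $\mathcal{U}_i=\{f^{-1}(-\frac1i,\frac1i):f\in A_i\}$ and $\mathcal{F}_i=\{f^{-1}[-\frac{1}{i+1},\frac{1}{i+1}]:f\in A_i\}$; since $A_i$ converges to $\mathbf 0$, each point of $X$ lies in all but finitely many members of $\mathcal{F}_i$, so $\mathcal{F}_i$ is a $\gamma$-cover of zero-sets with $\mathcal{F}_i$ refining $\mathcal{U}_i$ memberwise, hence $\mathcal{U}_i\in\Gamma_F$. (As before one first disposes of the degenerate case in which infinitely many $\mathcal{U}_i$ contain $X$ itself, which yields a subsequence converging uniformly to $\mathbf 0$; a uniformly convergent subsequence of $A_i$'s is in particular pointwise convergent and gives the required selector directly — actually for the $\Gamma_x$ target one should be slightly careful here and extract from it a genuine element of $\Gamma_{\mathbf 0}$.) Applying $S_1(\Gamma_F,\Gamma)$ to $\{\mathcal{U}_i\}$ we obtain $U_i\in\mathcal{U}_i$, say $U_i=f_i^{-1}(-\frac1i,\frac1i)$ with $f_i\in A_i$, such that $\{U_i:i\in\omega\}\in\Gamma$. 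I then claim $\{f_i:i\in\omega\}\in\Gamma_{\mathbf 0}$: given a basic neighborhood $\langle\mathbf 0,K,\epsilon\rangle$ of $\mathbf 0$, because $\{U_i\}$ is a $\gamma$-cover each $x\in K$ lies in all but finitely many $U_i$, so for all large $i$ we have $K\subseteq U_i$ and also $\frac1i<\epsilon$, whence $|f_i(x)|<\frac1i<\epsilon$ for all $x\in K$; thus $\{f_i\}$ converges pointwise to $\mathbf 0$ and (being infinite and not containing $\mathbf 0$ after discarding finitely many terms) is in $\Gamma_{\mathbf 0}$. This gives $C_p(X)\models S_1(\Gamma_x,\Gamma_x)$.

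The main obstacle, as in the companion proposition, is the bookkeeping around the degenerate case and around ensuring the selected set is a genuine $\gamma$-sequence rather than merely an $\omega$-set: here one must verify that the $\gamma$-cover property of $\{U_i\}$ transfers to pointwise convergence of $\{f_i\}$ (not just to $\mathbf 0\in\overline{\{f_i\}}$), and that after the translation nothing collapses the selected set to something finite. I also need to confirm that $f_i$ converges to $\mathbf 0$ along the \emph{full} sequence $i\in\omega$, which is exactly what the $\gamma$-cover (as opposed to $\omega$-cover) hypothesis buys; this is the one place where the proof genuinely differs from that of Proposition \ref{pr23}. Strong zero-dimensionality of $X$ is not needed in this direct argument — it comes for free afterward, if desired, via Theorem \ref{th151}.
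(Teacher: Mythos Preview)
Your proposal is correct and follows essentially the same approach as the paper: both directions match the paper's proof of Proposition~\ref{pr28} almost line for line, including the citation of Theorem~\ref{th151} and Proposition~\ref{pr152} for $(1)\Rightarrow(2)$ and the $\mathcal{U}_i=\{f^{-1}(-\tfrac1i,\tfrac1i):f\in A_i\}$ construction for $(2)\Rightarrow(1)$. You are in fact slightly more careful than the paper in the degenerate case (the paper only records $\{f_{i_k}\}\in\Omega_{\mathbf 0}$ where $\Gamma_{\mathbf 0}$ is what is needed), and you correctly observe that the concluding appeal to strong zero-dimensionality is extraneous to the implication itself.
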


\begin{proof} $(1)\Rightarrow(2)$. By Theorem \ref{th151} and
Proposition \ref{pr152}.

 $(2)\Rightarrow(1)$. Let $X$ $\models$ $S_{1}(\Gamma_F, \Gamma)$
and $\{A_{i}\}_{i\in \omega}$ such that $A_i\in \Gamma_{\bf{0}}$
for each $i\in \omega$. Consider
$\mathcal{U}_i=\{f^{-1}(-\frac{1}{i}, \frac{1}{i}): f\in A_i \}$
for each $i\in \omega$. Without loss of generality we can assume
that there is $i'$ that a set $U\neq X$ for any $i>i'$ and $U\in
\mathcal{U}_i$. Otherwise there is sequence $\{f_{i_k}\}_{k\in
\omega}$ such that $\{f_{i_k}\}_{k\in \omega}$ uniform converge to
$\bf{0}$ and $\{f_{i_k}: k\in \omega\}\in \Omega_{\bf 0}$.

Note that $\mathcal{F}_i=\{f^{-1}[-\frac{1}{i+1}, \frac{1}{i+1}]:
f\in A_i \}$ is $\gamma$-cover of zero-sets of $X$. It follows
that $\mathcal{U}_i\in \Gamma_F$ for each $i\in \omega$. By $X$
$\models$ $S_{1}(\Gamma_F, \Gamma)$, there is a set $\{U_{i}:
i\in\omega\}$ such that for each $i$, $U_{i}\in \mathcal{U}_i$,
and $\{U_{i}: i\in\omega \}$ is an element of $\Gamma$.

We claim that $\{f_{i} : i\in \omega \}\in \Gamma_{\bf{0}}$. Let
$W=<$ $\bf 0$ $, K, \epsilon>$ be a base neighborhood of $\bf 0$
where $\epsilon>0$ and $K$ is a finite subset of $X$, then there
are $i_0\in \omega$ such that $\frac{1}{i_0}<\epsilon$ and
$U_{i}\supset K$ for any $i>i_0$. It follows that $f_{i}\in W$ for
$i>i_0$, and $C_p(X)$ $\models$ $S_{1}(\Gamma_x, \Gamma_x)$.

 By Theorem
\ref{th66}, we have that $X$ is strongly zero-dimensional.

\end{proof}

\begin{theorem}\label{th158} For a   space $X$, the following statements are
equivalent:

\begin{enumerate}

\item $C_p(X)$ $\models$ $S_{1}(\mathcal{S},\mathcal{S})$;

\item $X$ $\models$ $S_{1}(\Gamma_F, \Gamma)$,  $X$ $\models$ $V$
and is strongly zero-dimensional;

\item $X$ $\models$ $S_{1}(\Gamma_{cl}, \Gamma_{cl})$, $X$
$\models$ $V$ and is strongly zero-dimensional;

\item  $C_p(X)$ $\models$ $S_{1}(\Gamma_x, \Gamma_x)$ and is
sequentially separable;

\item  $C_p(X)$ $\models$ $S_{1}(\mathcal{S}, \Gamma_x)$ and is
sequentially separable.

\end{enumerate}

\end{theorem}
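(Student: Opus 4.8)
The plan is to route everything through the already–established dictionary between covering properties of $X$ and pointwise–selection properties of $C_p(X)$, using Velichko's Theorem \ref{th38} to supply the sequential–separability ingredient, and to reserve the real work for the two passages involving $S_1(\mathcal S,\mathcal S)$ itself.

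First I would settle the equivalences among (2), (3), (4), (5), which are pure bookkeeping over the cited results. For $(2)\Leftrightarrow(3)$ use Proposition \ref{pr152}: both statements carry the clauses ``$X\models V$'' and ``$X$ strongly zero-dimensional'', and over a strongly zero-dimensional space $S_1(\Gamma_F,\Gamma)$ and $S_1(\Gamma_{cl},\Gamma_{cl})$ are literally the same property. For $(2)\Leftrightarrow(4)$ combine Proposition \ref{pr28} (which identifies $X\models S_1(\Gamma_F,\Gamma)$ with $C_p(X)\models S_1(\Gamma_x,\Gamma_x)$, and forces strong zero-dimensionality along the way) with Theorem \ref{th38} ($X\models V$ iff $C_p(X)$ is sequentially separable); then $(3)\Leftrightarrow(4)$ drops out, or is read straight off Sakai's Theorem \ref{th151} together with Theorem \ref{th38}. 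The implication $(4)\Rightarrow(5)$ is essentially immediate: given a sequence $\{S_n:n\in\omega\}$ of sequentially dense subsets of $C_p(X)$ and a point $f\in C(X)$, each $S_n$ contains the range $A_n$ of a sequence converging to $f$, so $A_n\in\Gamma_f$, and $S_1(\Gamma_f,\Gamma_f)$ applied to $\{A_n\}$ returns $d_n\in A_n\subseteq S_n$ with $\{d_n:n\in\omega\}\to f$; sequential separability is inherited verbatim.

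For $(5)\Rightarrow(1)$, fix a countable sequentially dense set $\{e_j:j\in\omega\}$ in $C_p(X)$ and assume $C_p(X)\models S_1(\mathcal S,\Gamma_x)$. Given a sequence of sequentially dense subsets of $C_p(X)$, re-enumerate it as $\{S_{j,k}:j,k\in\omega\}$, and for each $j$ apply $S_1(\mathcal S,\Gamma_{e_j})$ to $\{S_{j,k}:k\in\omega\}$ to obtain $d_{j,k}\in S_{j,k}$ with $d_{j,k}\to e_j$ as $k\to\infty$; the set $\{d_{j,k}:j,k\in\omega\}$ is the candidate selection. For $(1)\Rightarrow(2)$, observe first that $S_1(\mathcal S,\mathcal S)$ applied to a constant sequence of sequentially dense sets forces $C_p(X)$ to be sequentially separable, hence $X\models V$ by Theorem \ref{th38}; fix a countable sequentially dense $\{h_m:m\in\omega\}$. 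Given $\gamma_F$-shrinkable $\gamma$-covers $\mathcal V_i$ $(i\in\omega)$, choose for each $i$ an enumeration $\mathcal U_i=\{U_i^m:m\in\omega\}\subseteq\mathcal V_i$ together with the associated zero-set $\gamma$-cover $\{F(U_i^m):m\in\omega\}$, $F(U_i^m)\subseteq U_i^m$, and put $\mathcal S_i=\{f_i^m\in C(X): f_i^m\upharpoonright F(U_i^m)=h_m,\ f_i^m\upharpoonright(X\setminus U_i^m)=1\}$; exactly as in the proof of Theorem \ref{th50}, each $\mathcal S_i$ is countable and sequentially dense. Apply $S_1(\mathcal S,\mathcal S)$ to $\{\mathcal S_i:i\in\omega\}$ to get $f_i^{m(i)}\in\mathcal S_i$ with $\{f_i^{m(i)}:i\in\omega\}$ sequentially dense, and read off $\{U_i^{m(i)}:i\in\omega\}$; one then argues that this family is a $\gamma$-cover of $X$, which together with the already-secured clauses ``$X\models V$'' and ``$X$ strongly zero-dimensional'' (the latter automatic via Proposition \ref{pr28}) gives (2).

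The genuinely delicate points — and exactly where this theorem departs from its $\mathcal D$-valued, $\omega$-cover companion Theorem \ref{th50} — are the two verifications I have left open. In $(1)\Rightarrow(2)$, bare sequential density of $\{f_i^{m(i)}\}$ only yields a \emph{subsequence} converging to $\mathbf 0$, hence only a \emph{sub}-collection of $\{U_i^{m(i)}\}$ that is a $\gamma$-cover; to land a $\gamma$-cover indexed by all of $\omega$ one must set up the $\mathcal S_i$ more carefully (for instance by a preliminary re-enumeration of the $\mathcal V_i$, or by interleaving the copies of $h_m$ so that every tail of the selected sequence already meets every prescribed finite subset of $X$). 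Dually, in $(5)\Rightarrow(1)$ the naive bookkeeping delivers only $e_j\in[\{d_{j,k}:k\in\omega\}]_{seq}$ for each $j$, i.e.\ the dense set $\{e_j\}$ lying inside the \emph{second} sequential closure of the selection, and promoting this to genuine sequential density of $\{d_{j,k}:j,k\in\omega\}$ is the step that uses $S_1(\mathcal S,\Gamma_x)$ — rather than the weaker $S_1(\mathcal S,\Omega_x)$ that sufficed for $S_1(\mathcal S,\mathcal D)$ — at full strength, through a diagonal selection across the levels. Once these two arguments are in place the loop $(1)\Rightarrow(2)\Leftrightarrow(3)\Leftrightarrow(4)\Rightarrow(5)\Rightarrow(1)$ closes, and I expect the $(1)\Rightarrow(2)$ side to be the main obstacle.
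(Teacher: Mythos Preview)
Your architecture for $(2)\Leftrightarrow(3)\Leftrightarrow(4)\Rightarrow(5)$ matches the paper's exactly. The divergences are in the two places you flagged, and in both cases the paper takes a simpler route than the ones you are reaching for.

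For $(1)\Rightarrow(2)$: you are right that sequential density of $\{f_i^{m(i)}\}$ only produces a \emph{subsequence} $\{f_{i_j}^{m(i_j)}\}$ converging to $\mathbf 0$, hence only the sub-collection $\{U_{i_j}^{m(i_j)}\}$ is a $\gamma$-cover. But that is already $S_{fin}(\Gamma_F,\Gamma)$ (select $\{U_i^{m(i)}\}$ from $\mathcal V_i$ when $i\in\{i_j\}$ and $\emptyset$ otherwise), and the paper simply invokes the standing equality $S_{fin}(\Gamma_F,\Gamma)=S_1(\Gamma_F,\Gamma)$, proved exactly as $S_{fin}(\Gamma,\Gamma)=S_1(\Gamma,\Gamma)$ in \cite{jmss}. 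No re-engineering of the $\mathcal S_i$ is needed.

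For closing the loop, the paper does \emph{not} attempt $(5)\Rightarrow(1)$. Your worry about landing only in the second sequential closure is well founded: the diagonalization ``$d_{j_s,k}\to e_{j_s}\to f$ implies some $d_{j_s,k_s}\to f$'' is precisely an $\alpha_2$-type statement, and at stage (5) you do not yet have $S_1(\Gamma_x,\Gamma_x)$ available (that is (4), which you are trying to reach). The paper sidesteps this by running the same covering construction as in $(1)\Rightarrow(2)$ but applying $S_1(\mathcal S,\Gamma_{\mathbf 0})$ instead of $S_1(\mathcal S,\mathcal S)$: now the \emph{entire} selected sequence $\{f_i^{m(i)}\}$ converges to $\mathbf 0$, so $\{U_i^{m(i)}:i\in\omega\}$ is a genuine $\gamma$-cover with no subsequence issue, giving $(5)\Rightarrow(2)$ directly.

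Finally, note that with the paper's wiring the implication $(2)\Rightarrow(1)$ must be proved on its own, and this is where the real content sits (your loop avoided it by routing through $(5)\Rightarrow(1)$). The paper does it as follows: given $\{S_i\}\subset\mathcal S$ and a fixed countable sequentially dense $\{h_i\}$, pick for each $i$ a sequence $\{f_k^i:k\}\subset S_i$ with $f_k^i\to h_i$, set $U_{i,k}=\{x:|f_k^i(x)-h_i(x)|<1/i\}$, apply $S_1(\Gamma_F,\Gamma)$ to obtain a $\gamma$-cover $\{U_{i,k(i)}:i\}$, and then check that $\{f_{k(i)}^i:i\}$ is sequentially dense via the triangle inequality $|f-f_{k(i_s)}^{i_s}|\le|f-h_{i_s}|+|h_{i_s}-f_{k(i_s)}^{i_s}|$; the second summand is controlled by $1/i_s$ on any finite $K$ once $K\subset U_{i_s,k(i_s)}$. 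This is the step that replaces your missing diagonalization, and it works because the $\gamma$-cover supplies \emph{uniform} eventual smallness of $|h_i-f_{k(i)}^i|$ on finite sets.
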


\begin{proof} $(1)\Rightarrow(2)$. Let $\{\mathcal{V}_i\}\subset \Gamma_F$ and
$\mathcal{S}=\{h_m: m\in \omega\}$ be a countable sequentially
dense subset of $C_p(X)$. For each $i\in \omega$ we consider a
countable sequentially dense subset $\mathcal{S}_i$ of $C_p(X)$
and $\mathcal{U}_i=\{ U^{m}_i :m\in \omega\}\subset \mathcal{V}_i$
where

$\mathcal{S}_i=\{ f^m_i\in C(X) : f^m_i\upharpoonright
F(U^{m}_i)=h_m$ and $f^m_i\upharpoonright (X\setminus U^{m}_i)=1$
for $m \in \omega \}$. Note that $\mathcal{U}_i\in \Gamma_F$ for
each $i\in \omega$.

Since $\mathcal{F}_i=\{F(U^{m}_i): m\in \omega\}$ is infinity and
it is a $\gamma$-cover of zero-sets of $X$. Since $\mathcal{S}$ is
a countable sequentially dense subset of $C_p(X)$, we have that
$\mathcal{S}_i$ is a  countable sequentially dense subset of
$C_p(X)$ for each $i\in \omega$.  Let $h\in C(X)$, there is a set
$\{h_{m_s}: s\in \omega\}\subset \mathcal{S}$ such that
$\{h_{m_s}\}_{s\in \omega}$ converge to $h$.
 Let $K$
be a finite subset of $X$, $\epsilon>0$ and $W=<h, K,\epsilon>$ be
a base neighborhood of $h$, then there is a number $m_0$ such that
$K\subset F(U^{m}_i)$ for $m>m_0$ and $h_{m_s}\in W$ for
$m_s>m_0$. Since $f^{m_s}_i\upharpoonright K=
h_{m_s}\upharpoonright K$ for each $m_s>m_0$, $f^{m_s}_i\in W$ for
each $m_s>m_0$. It follows that a sequence $\{f^{m_s}_i\}_{s\in
\omega}$ converge to $h$.

Since  $C(X)$ $\models$ $S_{1}(\mathcal{S},\mathcal{S})$, there is
a sequence $\{f^{m(i)}_{i}\}_{i\in\omega}$ such that for each $i$,
$f^{m(i)}_{i}\in \mathcal{S}_i$, and $\{f^{m(i)}_{i}: i\in\omega
\}$ is an element of $\mathcal{S}$.

Consider a set $\{U^{m(i)}_{i}: i\in \omega\}$.

(a). $U^{m(i)}_{i}\in \mathcal{U}_{i}$.

(b). $\{U^{m(i)}_{i}: i\in \omega\}$ is a $\gamma$-cover of $X$.

There is a sequence $\{f^{m(i)_{j}}_{i_{j}}\}$ converge to
$\bf{0}$. Let $K$ be a finite subset of $X$ and $U=<$ $\bf{0}$ $,
K, (-1,1)>$ be a base neighborhood of $\bf{0}$, then there exists
$j_0\in \omega$ such that  $f^{m(i)_{j}}_{i_{j}}\in U$ for each
$j>j_0$. It follows that $K\subset U^{m(i)_{j}}_{i_{j}}$ for
$j>j_0$. We thus get $X$ $\models$ $S_{fin}(\Gamma_F, \Gamma)$.
But $S_{fin}(\Gamma_F, \Gamma)=S_{1}(\Gamma_F, \Gamma)$.

By Proposition \ref{pr23}, $X$ $\models$ $S_{1}(\Gamma_F, \Omega)$
implies $C_p(X)$ $\models$ $S_{1}(\Gamma_x, \Omega_x)$. By Theorem
\ref{th66}, $X$ is strongly zero-dimensional.

$(2)\Rightarrow(1)$.  Fix $\{S_i: i\in \omega\}\subset
\mathcal{S}$ and $S=\{h_i: i\in \omega\}\in \mathcal{S}$. For each
$i\in \omega$ we consider a set $\{f^{i}_k: k\in \omega\}\subset
S_i$ such that $\{f^{i}_k\}_{k\in \omega}$ converge to $h_i$. For
each $i,k\in \omega$, we put $U_{i,k}=\{x\in X :
|f^{i}_k(x)-h_i(x)|<\frac{1}{i}\}$, $Z_{i,k}=\{x\in X :
|f^{i}_k(x)-h_i(x)|\leq\frac{1}{i+1}\}$. Each $U_{i,k}$ (resp.,
$Z_{i,k}$) is a cozero-set (resp., zero-set) in $X$ with
$Z_{i,k}\subset U_{i,k}$. Let $\mathcal{U}_i=\{ U_{i,k} : k\in
\omega\}$ and $\mathcal{Z}_i=\{ Z_{i,k} : k\in \omega\}$. So
without loss of generality, we may assume $U_{i,k}\neq X$ for each
$i,k\in \omega$. We can easily check that the condition $f^{i}_k
\rightarrow h_i$ ($k \rightarrow \infty $) implies that
$\mathcal{Z}_i$ is a $\gamma$-cover of $X$. Since  $X$ $\models$
$S_{1}(\Gamma_F, \Gamma)$ there is  a sequence $\{U_{i,k(i)}\}_{
i\in\omega}$ such that for each $i$, $U_{i,k(i)}\in
\mathcal{U}_i$, and $\{U_{i,k(i)}: i\in\omega \}$ is an element of
$\Gamma$. We claim that $\{f^{i}_{k(i)}\}_{i\in \omega}\in
\mathcal{S}$. For  $f\in C(X)$ there is a set $\{h_{i_s}: s\in
\omega\}\subset S$ such that $\{h_{i_s}\}_{s\in \omega}$ converge
to $f$. Then a sequence $\{f^{i_s}_{k(i_s)}\}_{s\in \omega}$
converge to $f$ too. Let $K$ be a finite subset of $X$,
$\epsilon>0$, and $U=< f, K, \epsilon>$ be a base neighborhood of
$f$, then there exists $m\in \omega$ such that  $h_{i_s}\in < f,
K, \frac{\epsilon}{2}>$ for each $i_s>m$. Since $\{U_{i,k(i)}:
i\in\omega \}$ is an element of $\Gamma$, there exists $n>m$ such
that $\frac{1}{n}<\frac{\epsilon}{2}$ and $K\subset U_{i,k(i)}$
for $i>n$. It follows that for each $i_s>n$ and $x\in K$ we have
that $|f(x)-f^{i_s}_{k(i_s)}(x)|\leq
|f(x)-h_{i_s}(x)|+|f^{i_s}_{k(i_s)}(x)-h_{i_s}(x)|<\frac{\epsilon}{2}+\frac{\epsilon}{2}=\epsilon$.
Hence a sequence $\{f^{i_s}_{k(i_s)}\}_{s\in \omega}$ converge to
$f$ and $\{f^{i}_{k(i)}\}_{i\in \omega}\in \mathcal{S}$.

$(2)\Leftrightarrow(3)\Leftrightarrow(4)$. By Theorem \ref{th151},
Proposition \ref{pr152}, Proposition \ref{pr28}  and Theorem
\ref{th38}.

$(4)\Rightarrow(5)$ is immediate.

$(5)\Rightarrow(2)$. Let $C_p(X)$ $\models$ $S_{1}(\mathcal{S},
\Gamma_x)$ and $C_p(X)$ be a sequentially separable.

Let $\{\mathcal{V}_i\}\subset \Gamma_F$ and $\mathcal{S}=\{h_m:
m\in \omega\}$ be a countable sequentially dense subset of
$C_p(X)$. For each $i\in \omega$ we consider a countable
sequentially dense subset $\mathcal{S}_i$ of $C_p(X)$ and
$\mathcal{U}_i=\{ U^{m}_i\}_{m\in \omega}\subset \mathcal{V}_i $
where

$\mathcal{S}_i=\{ f^m_i\in C(X) : f^m_i\upharpoonright
F(U^{m}_i)=h_m$ and $f^m_i\upharpoonright (X\setminus U^{m}_i)=1$
for $m \in \omega \}$.

Since $\mathcal{F}_i=\{F(U^{m}_i): m\in \omega\}$ is infinity, it
is a $\gamma$-cover of zero-subsets in $X$. Since $\mathcal{S}$ is
a countable sequentially dense subset of $C_p(X)$, we have that
$\mathcal{S}_i$ is a  countable sequentially dense subset of
$C_p(X)$ for each $i\in \omega$.

Let $h\in C(X)$, there is a set $\{h_{m_s}: s\in \omega\}\subset
\mathcal{S}$ such that $\{h_{m_s}\}_{s\in \omega}$ converge to
$h$.
 Let $K$ be a finite subset of $X$, $\epsilon>0$ and $W=<h, K,\epsilon>$ be
a base neighborhood of $h$, then there is a number $m_0$ such that
$K\subset F(U^{m}_i)$ for $m>m_0$ and $h_{m_s}\in W$ for
$m_s>m_0$. Since $f^{m_s}_i\upharpoonright K=
h_{m_s}\upharpoonright K$ for each $m_s>m_0$, $f^{m_s}_i\in W$ for
each $m_s>m_0$. It follows that a sequence $\{f^{m_s}_i\}_{s\in
\omega}$ converge to $h$.

By $C_p(X)$ $\models$ $S_{1}(\mathcal{S}, \Gamma_x)$, there is a
sequence $\{f^{m(i)}_{i}: i\in\omega\}$ such that for each $i$,
$f^{m(i)}_{i}\in \mathcal{S}_i$, and $\{f^{m(i)}_{i}: i\in\omega
\}$ is an element of $\Gamma_0$.

Consider a set $\{U^{m(i)}_{i}: i\in \omega\}$.

(a). $U^{m(i)}_{i}\in \mathcal{U}_{i}$.

(b). $\{U^{m(i)}_{i}: i\in \omega\}$ is a $\gamma$-cover of $X$.

Let $K$ be a finite subset of $X$ and $U=<$ $\bf{0}$ $, K,
\frac{1}{2}>$ be a base neighborhood of $\bf{0}$, then there is
$j_0\in \omega$ such that $f^{m(i)_{j}}_{i_{j}}\in U$ for each
$j>j_0$. It follows that $K\subset U^{m(i)_{j}}_{i_{j}}$ for each
$j>j_0$. We thus get $X$ $\models$ $S_{1}(\Gamma_F, \Gamma)$. By
Theorem \ref{th38}, $X$ $\models$ $V$. Since $C_p(X)$ $\models$
$S_{1}(\mathcal{S}, \Gamma_x)$  implies that $C_p(X)$ $\models$
$S_{1}(\mathcal{S}, \Omega_x)$, by Theorem \ref{th50}, we have
that $X$ is strongly zero-dimensional.

\end{proof}

\begin{corollary}\label{th159} For a separable metrizable space $X$, the following statements are
equivalent:

\begin{enumerate}

\item $C_p(X)$ $\models$ $S_{1}(\mathcal{S},\mathcal{S})$;

\item $X$ $\models$ $S_{1}(\Gamma_F, \Gamma)$ and is strongly
zero-dimensional;

\item $X$ $\models$ $S_{1}(\Gamma_{cl}, \Gamma_{cl})$ and is
strongly zero-dimensional;

\item  $C_p(X)$ $\models$ $S_{1}(\Gamma_x, \Gamma_x)$;

\item  $C_p(X)$ $\models$ $S_{1}(\mathcal{S}, \Gamma_x)$.

\end{enumerate}

\end{corollary}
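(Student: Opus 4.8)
The plan is to obtain Corollary~\ref{th159} as an immediate specialization of Theorem~\ref{th158}, by checking that for a separable metrizable space $X$ the two ``side conditions'' that appear in Theorem~\ref{th158} (namely ``$X\models V$'' and ``$C_p(X)$ is sequentially separable'') are automatically satisfied, so that they may be suppressed without changing the list of equivalences.

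First I would record that $iw(X)=\aleph_0$ for every separable metrizable $X$: such a space has a countable base, hence weight $\le\aleph_0$, and the identity map is a condensation onto a Tychonoff space of weight $\le\aleph_0$, so $iw(X)\le w(X)=\aleph_0$; since $iw(X)$ is by definition infinite, $iw(X)=\aleph_0$. Next I would check that $X\models V$: the identity $\mathrm{id}\colon X\to X$ is a condensation onto a separable metric space, and in any metric space every open set, in particular every cozero-set, is an $F_\sigma$-set; thus $\mathrm{id}$ witnesses the $V$-property. Consequently, by Theorem~\ref{th38}, $C_p(X)$ is sequentially separable (and, by Theorem~\ref{th31}, separable) for every separable metrizable $X$.

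With these two observations the corollary is just a rereading of Theorem~\ref{th158}. Condition~(1) is literally Theorem~\ref{th158}(1). In Theorem~\ref{th158}(2) the clause ``$X\models V$'' is, for separable metrizable $X$, always true, hence that condition is equivalent to ``$X\models S_{1}(\Gamma_F,\Gamma)$ and $X$ is strongly zero-dimensional'', i.e.\ to condition~(2); the same remark turns Theorem~\ref{th158}(3) into condition~(3). In Theorem~\ref{th158}(4) and~(5) the clause ``$C_p(X)$ is sequentially separable'' holds automatically by the previous paragraph, so those conditions reduce to ``$C_p(X)\models S_{1}(\Gamma_x,\Gamma_x)$'' and ``$C_p(X)\models S_{1}(\mathcal{S},\Gamma_x)$'', i.e.\ to conditions~(4) and~(5). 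Since Theorem~\ref{th158} asserts the mutual equivalence of its five items, the five items of the present corollary are mutually equivalent.

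There is essentially no hard step here; the only point requiring a moment's care is to verify that deleting an always-true conjunct from a characterizing condition preserves the equivalence in both directions (it trivially does), and to match the two lists of conditions correctly. (Note also that the strong zero-dimensionality of $X$ need not be listed explicitly in~(1), (4) or~(5): by Theorem~\ref{th151} it is already implied by $C_p(X)\models S_{1}(\Gamma_x,\Gamma_x)$, hence by each of those conditions.)
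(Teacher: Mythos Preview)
Your proposal is correct and is exactly the intended derivation: the paper states Corollary~\ref{th159} without proof, treating it as immediate from Theorem~\ref{th158}, and your argument spells out precisely why the side conditions ``$X\models V$'' and ``$C_p(X)$ is sequentially separable'' are automatic for separable metrizable $X$. There is nothing to add or correct.
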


The proof of fact that $S_{fin}(\Gamma_F, \Gamma)=S_{1}(\Gamma_F,
\Gamma)$  (or $S_{1}(\Gamma_{cl},
\Gamma_{cl})=S_{fin}(\Gamma_{cl}, \Gamma_{cl})$)  is analogous to
proof of Theorem 1.1 (in \cite{jmss}) that $S_{1}(\Gamma,
\Gamma)=S_{fin}(\Gamma,\Gamma)$.

\begin{proposition}\label{pr172} For a   space $X$, the following statements are
equivalent:

\begin{enumerate}

\item $C_p(X)$ $\models$ $S_{fin}(\Gamma_x, \Gamma_x)$;

\item $X$ $\models$ $S_{fin}(\Gamma_F, \Gamma)$;

\item $X$ $\models$ $S_{fin}(\Gamma_{cl}, \Gamma_{cl})$ and is
strongly zero-dimensional;

\item $X$ $\models$ $S_{1}(\Gamma_F, \Gamma)$.

\end{enumerate}

\end{proposition}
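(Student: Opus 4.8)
The plan is to establish the cycle $(1)\Rightarrow(2)\Rightarrow(4)\Rightarrow(1)$ together with $(4)\Leftrightarrow(3)$, which makes all four statements equivalent. Only the implication $(1)\Rightarrow(2)$ needs a genuine argument; the rest follows by combining Proposition \ref{pr28}, Theorem \ref{th151}, Proposition \ref{pr152}, and the equalities $S_{fin}(\Gamma_F,\Gamma)=S_{1}(\Gamma_F,\Gamma)$ and $S_{fin}(\Gamma_{cl},\Gamma_{cl})=S_{1}(\Gamma_{cl},\Gamma_{cl})$ recorded just before the statement.

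For $(1)\Rightarrow(2)$ I would imitate the scheme used in the proofs of Theorems \ref{th104} and \ref{th158}. Let $\{\mathcal{V}_n\}_{n\in\omega}$ be a sequence of $\gamma_F$-shrinkable $\gamma$-covers of $X$. Shrink each $\mathcal{V}_n$ to a countable subfamily $\mathcal{U}_n=\{U_{n,m}:m\in\omega\}$ of pairwise distinct members; it is still a $\gamma$-cover, still $\gamma_F$-shrinkable, and by the standing convention $X\notin\mathcal{V}_n$, so $U_{n,m}\neq X$ for all $n,m$. For each pair $n,m$ pick a continuous $f_{n,m}:X\to[0,1]$ with $f_{n,m}^{-1}(0)=F(U_{n,m})$ and $f_{n,m}^{-1}(1)=X\setminus U_{n,m}$. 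Since $\{F(U_{n,m}):m\in\omega\}$ is an infinite subfamily of a $\gamma$-cover of zero-sets, each $x\in X$ lies in $F(U_{n,m})$ for all but finitely many $m$, so $f_{n,m}(x)=0$ for all but finitely many $m$; hence $A_n:=\{f_{n,m}:m\in\omega\}$ is an infinite subset of $C_p(X)$ not containing $\mathbf 0$ with $A_n\in\Gamma_{\mathbf 0}$. Applying $C_p(X)\models S_{fin}(\Gamma_{\mathbf 0},\Gamma_{\mathbf 0})$ to $\{A_n\}_{n\in\omega}$ yields finite $B_n\subseteq A_n$ with $\bigcup_{n\in\omega}B_n\in\Gamma_{\mathbf 0}$. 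Set $\mathcal{F}_n=\{U_{n,m}:f_{n,m}\in B_n\}\subseteq\mathcal{V}_n$, a finite family, and $\mathcal{W}=\bigcup_{n\in\omega}\mathcal{F}_n$. Then $\mathcal{W}$ is infinite (the correspondence $f_{n,m}\leftrightarrow U_{n,m}$ is a bijection onto $\mathcal{W}$) and does not contain $X$; and for any $x\in X$, testing the neighborhood $\{g\in C(X):|g(x)|<1\}$ of $\mathbf 0$ against $\bigcup_{n\in\omega}B_n\in\Gamma_{\mathbf 0}$ shows that $f_{n,m}(x)\neq 1$, i.e.\ $x\in U_{n,m}$, for all but finitely many members of $\mathcal{W}$. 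Hence $\mathcal{W}\in\Gamma$, so $X\models S_{fin}(\Gamma_F,\Gamma)$.

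The remaining implications are immediate from the cited results. $(2)\Leftrightarrow(4)$ is the equality $S_{fin}(\Gamma_F,\Gamma)=S_{1}(\Gamma_F,\Gamma)$. For $(4)\Rightarrow(1)$: by Proposition \ref{pr28}, $(4)$ is equivalent to $C_p(X)\models S_{1}(\Gamma_x,\Gamma_x)$, which trivially implies $C_p(X)\models S_{fin}(\Gamma_x,\Gamma_x)$, i.e.\ $(1)$. For $(4)\Leftrightarrow(3)$: by Proposition \ref{pr28} and Theorem \ref{th151}, $(4)$ is equivalent to "$X\models S_{1}(\Gamma_{cl},\Gamma_{cl})$ and $X$ is strongly zero-dimensional", and using $S_{1}(\Gamma_{cl},\Gamma_{cl})=S_{fin}(\Gamma_{cl},\Gamma_{cl})$ this is exactly $(3)$; note that strong zero-dimensionality is not lost when passing to $(4)$, since $(4)$ forces it through Proposition \ref{pr28} and Theorem \ref{th151} (and Proposition \ref{pr152} then gives the passage from $\Gamma_{cl}$ to $\Gamma_F$).

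I do not expect a serious obstacle: the content is essentially that, for spaces of the form $C_p(X)$, the properties $\alpha_4$ and $\alpha_2$ coincide, and this is routed entirely through the already-proved cover-theoretic translations plus the folklore fact $S_{fin}(\Gamma_F,\Gamma)=S_{1}(\Gamma_F,\Gamma)$. The one point demanding care is the verification in $(1)\Rightarrow(2)$ that the extracted family $\mathcal{W}$ is a bona fide $\gamma$-cover (infinite, proper, and eventually covering each point): this relies on choosing the $U_{n,m}$ pairwise distinct, on the convention excluding $X$ from covers, and on correctly reading "$x\in U_{n,m}$" off "$|f_{n,m}(x)|<1$". Everything else is bookkeeping with the machinery already in place.
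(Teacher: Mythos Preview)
Your argument is correct, but it takes a different route from the paper's. The paper's proof is two lines: it invokes Theorem~2 of \cite{scheep} (Scheepers' result that for $C_p(X)$ one has $S_{1}(\Gamma_x,\Gamma_x)\Leftrightarrow S_{fin}(\Gamma_x,\Gamma_x)$, i.e.\ $\alpha_2=\alpha_4$) to identify $(1)$ with $C_p(X)\models S_1(\Gamma_x,\Gamma_x)$, and then quotes Theorem~\ref{th151} and Proposition~\ref{pr152} (with Proposition~\ref{pr28} implicit) to obtain the remaining equivalences. You instead prove $(1)\Rightarrow(2)$ by hand via the standard function construction, and then close the cycle through the cover-theoretic equality $S_{fin}(\Gamma_F,\Gamma)=S_1(\Gamma_F,\Gamma)$ and Proposition~\ref{pr28}. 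In effect you are re-deriving Scheepers' $\alpha_2=\alpha_4$ theorem for $C_p(X)$ by routing through the cover side, which makes your proof self-contained within the paper at the cost of length; the paper trades that self-containment for a one-line external citation.

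One small technical remark on your $(1)\Rightarrow(2)$: the claimed bijection $f_{n,m}\leftrightarrow U_{n,m}$ onto $\mathcal{W}$ need not be injective, since sets from different $\mathcal{U}_n$'s may coincide even though you chose the $U_{n,m}$ distinct \emph{within} each $\mathcal{U}_n$; so the infiniteness of $\mathcal{W}$ requires a word more (e.g.\ if $\mathcal{W}$ were finite then infinitely many nonempty $B_n$ would force some single $U\neq X$ to lie in infinitely many $\mathcal{V}_n$, and one can then adjust). This is the same minor point the paper itself glosses over in the parallel arguments of Theorems~\ref{th104} and~\ref{th158}, so it does not count against you.
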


\begin{proof}

 Note that, by Theorem 2 in \cite{scheep}, $C_p(X)$ $\models$ $S_{1}(\Gamma_x, \Gamma_x)$ iff $C_p(X)$ $\models$ $S_{fin}(\Gamma_x, \Gamma_x)$.
By Theorem \ref{th151} and Proposition \ref{pr152}, we obtain the
complete proof.
\end{proof}

\begin{theorem}\label{th173} For a   space $X$, the following statements are
equivalent:

\begin{enumerate}

\item $C_p(X)$ $\models$ $S_{fin}(\mathcal{S},\mathcal{S})$ and is
sequentially separable;

\item $X$ $\models$ $S_{1}(\Gamma_F, \Gamma)$,  $X$ $\models$ $V$
and is strongly zero-dimensional;

\item $X$ $\models$ $S_{fin}(\Gamma_F, \Gamma)$,  $X$ $\models$
$V$ and is strongly zero-dimensional;

\item $X$ $\models$ $S_{fin}(\Gamma_{cl}, \Gamma_{cl})$, $X$
$\models$ $V$ and is strongly zero-dimensional;

\item  $C_p(X)$ $\models$ $S_{fin}(\Gamma_x, \Gamma_x)$ and is
sequentially separable;

\item  $C_p(X)$ $\models$ $S_{fin}(\mathcal{S}, \Gamma_x)$ and is
sequentially separable.

\end{enumerate}

\end{theorem}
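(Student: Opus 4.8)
The plan is to prove Theorem~\ref{th173} by establishing the equivalences through the same circle of ideas used in Theorems~\ref{th158} and~\ref{th105}, exploiting the already-proved fact that $S_{fin}(\Gamma_F,\Gamma)=S_{1}(\Gamma_F,\Gamma)$ and $S_{fin}(\Gamma_{cl},\Gamma_{cl})=S_{1}(\Gamma_{cl},\Gamma_{cl})$. The equivalences $(2)\Leftrightarrow(3)\Leftrightarrow(4)$ are then immediate: $(2)\Leftrightarrow(3)$ is exactly this collapse of the $S_{fin}$ and $S_{1}$ versions of the $\Gamma_F$-selection principle, and $(3)\Leftrightarrow(4)$ follows from Proposition~\ref{pr172} (specifically the equivalence of $S_{fin}(\Gamma_F,\Gamma)$ with $S_{fin}(\Gamma_{cl},\Gamma_{cl})$ together with strong zero-dimensionality, via Theorem~\ref{th151} and Proposition~\ref{pr152}). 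Likewise $(5)\Leftrightarrow(6)$ follows once we know $(5)\Rightarrow(6)$ is trivial (since $\Gamma_x\subseteq\mathcal{S}$-indexed families give a weaker hypothesis, in the appropriate direction) — more precisely, $S_{fin}(\mathcal{S},\Gamma_x)\Rightarrow S_{fin}(\Gamma_x,\Gamma_x)$ is immediate because every $\Gamma_{\mathbf 0}$ is witnessed by a sequentially dense set, and for the reverse direction I would mimic the implication $(5)\Rightarrow(2)$ in Theorem~\ref{th158}, building the auxiliary sequentially dense sets $\mathcal{S}_i$ from a fixed countable sequentially dense $\mathcal{S}$.

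The main work is the cycle connecting clause $(1)$ with the cover-theoretic clauses. For $(1)\Rightarrow(2)$ I would copy the construction from the proof of $(1)\Rightarrow(2)$ in Theorem~\ref{th158}: given $\{\mathcal{V}_i\}\subset\Gamma_F$ and a countable sequentially dense $\mathcal{S}=\{h_m:m\in\omega\}$, form for each $i$ the auxiliary sequentially dense set $\mathcal{S}_i=\{f^m_i:f^m_i\upharpoonright F(U^m_i)=h_m,\ f^m_i\upharpoonright(X\setminus U^m_i)=1\}$ with $\mathcal{U}_i=\{U^m_i\}\subset\mathcal{V}_i$, check each $\mathcal{S}_i\in\mathcal{S}$ (uses that $\{F(U^m_i):m\in\omega\}$ is a $\gamma$-cover of zero-sets), apply $S_{fin}(\mathcal{S},\mathcal{S})$ to get finite $F_i\subset\mathcal{S}_i$ with $\bigcup_i F_i\in\mathcal{S}$, extract from it a sequence converging to $\mathbf 0$, and read off the corresponding $U$'s to get a $\gamma$-cover — hence $X\models S_{fin}(\Gamma_F,\Gamma)=S_{1}(\Gamma_F,\Gamma)$. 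Sequential separability of $C_p(X)$ gives $X\models V$ by Theorem~\ref{th38}, and strong zero-dimensionality follows as in Theorem~\ref{th158} via Proposition~\ref{pr23}/Theorem~\ref{th66} (since $S_{1}(\Gamma_F,\Gamma)\Rightarrow S_{1}(\Gamma_F,\Omega)$). For $(2)\Rightarrow(1)$ I would reproduce the argument of $(2)\Rightarrow(1)$ in Theorem~\ref{th158} almost verbatim, since $S_{fin}(\mathcal{S},\mathcal{S})$ is weaker than $S_{1}(\mathcal{S},\mathcal{S})$: fix $\{S_i\}\subset\mathcal{S}$ and $S=\{h_i\}\in\mathcal{S}$, choose $\{f^i_k\}_k\subset S_i$ converging to $h_i$, form the cozero sets $U_{i,k}$ and zero sets $Z_{i,k}$, apply $S_{1}(\Gamma_F,\Gamma)$ to get a $\gamma$-subcover $\{U_{i,k(i)}:i\in\omega\}$, and check $\{f^i_{k(i)}:i\in\omega\}\in\mathcal{S}$ by the same $\epsilon/2$ triangle-inequality estimate. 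Finally $(2)\Rightarrow(5)$ follows from Proposition~\ref{pr172} and Theorem~\ref{th38}, $(5)\Rightarrow(6)$ is immediate, and $(6)\Rightarrow(1)$ is proved just as $(4)\Rightarrow(1)$ in Theorem~\ref{th105}: given a dense $D=\{d_n\}$ and sequentially dense sets enumerated as $\{S_{n,m}\}$, pick finite $D_{n,m}\subset S_{n,m}$ with $d_n\in\overline{\bigcup_m D_{n,m}}$, making $\bigcup_{n,m}D_{n,m}$ dense; combined with sequential separability this yields $S_{fin}(\mathcal{S},\mathcal{S})$.

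The step I expect to be the real obstacle is verifying, in $(1)\Rightarrow(2)$, that each auxiliary set $\mathcal{S}_i$ is genuinely sequentially dense in $C_p(X)$ and that the convergence of the selected sequence transfers correctly from the $\mathcal{S}$-side to a $\gamma$-cover — this is where the $\gamma_F$-shrinkability (the existence of the zero-set $\gamma$-cover $\{F(U):U\in\mathcal{U}\}$ inside $\mathcal{U}$) is essential, because it is exactly what lets one extend a function agreeing with $h_m$ on $F(U^m_i)$ to equal $1$ off $U^m_i$ while keeping the relevant finite-set estimates. A secondary subtlety is the bookkeeping in passing from the finite sets $F_i$ produced by $S_{fin}$ to a single convergent sequence (the $\Gamma_x\leftrightarrow\Omega_x$ collapse and the $S_{fin}(\Gamma_F,\Gamma)=S_{1}(\Gamma_F,\Gamma)$ collapse both get invoked here), but since those collapses are already recorded in the excerpt this is routine. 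Once the $(1)\Leftrightarrow(2)$ equivalence is secured, everything else is assembled from Theorem~\ref{th38}, Theorem~\ref{th151}, Propositions~\ref{pr152}, \ref{pr172} and the trivial implications, exactly paralleling the structure of Theorem~\ref{th158}.
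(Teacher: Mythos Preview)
Your overall approach is correct and matches the paper, which disposes of the theorem in one line by citing Proposition~\ref{pr172}, Theorem~\ref{th158}, and Theorem~\ref{th38}: the point is simply that clause~(2) here coincides with clause~(2) of Theorem~\ref{th158}, and the $S_{fin}$/$S_1$ collapses in Proposition~\ref{pr172} reduce the remaining clauses to their $S_1$ counterparts already handled there. Your unpacking of the $(1)\Rightarrow(2)$ and $(2)\Rightarrow(1)$ constructions is faithful to what lies behind those citations.

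There is, however, a genuine error in your final direct argument for $(6)\Rightarrow(1)$. Mimicking $(4)\Rightarrow(1)$ of Theorem~\ref{th105} only produces a set $\bigcup_{n,m}D_{n,m}$ that is \emph{dense}, and the assertion ``combined with sequential separability this yields $S_{fin}(\mathcal{S},\mathcal{S})$'' is unjustified: sequential separability does not upgrade an arbitrary countable dense set to a sequentially dense one. Even if you use the full strength of $\Gamma_x$ (so that $\bigcup_m D_{n,m}$ converges to $d_n$ rather than merely accumulating there), you would still need a diagonalization---from $d_{n_k}\to f$ and sequences converging to each $d_{n_k}$, extract a single sequence converging to $f$---and this requires precisely an $\alpha_i$-type property of $C_p(X)$, which is what you are trying to prove. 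Fortunately this gap is harmless, because you already outlined the correct route in your first paragraph: $(6)\Rightarrow(2)$ by mimicking $(5)\Rightarrow(2)$ of Theorem~\ref{th158}, and then $(2)\Rightarrow(1)$. Drop the direct $(6)\Rightarrow(1)$ attempt and close the cycle that way. (A minor related point: in your discussion of $(5)\Leftrightarrow(6)$ you have the trivial direction backwards---it is $(5)\Rightarrow(6)$ that is immediate, by passing from each $S_i\in\mathcal{S}$ to a subset in $\Gamma_x$, not the other way around.)
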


\begin{proof} By Proposition \ref{pr172}, Theorem \ref{th158} and Theorem \ref{th38}.

\end{proof}

\begin{corollary}\label{th178} For a separable metrizable space $X$, the following statements are
equivalent:

\begin{enumerate}

\item $C_p(X)$ $\models$ $S_{fin}(\mathcal{S},\mathcal{S})$;

\item $X$ $\models$ $S_{1}(\Gamma_F, \Gamma)$, and is strongly
zero-dimensional;

\item $X$ $\models$ $S_{fin}(\Gamma_F, \Gamma)$,  and is strongly
zero-dimensional;

\item $X$ $\models$ $S_{fin}(\Gamma_{cl}, \Gamma_{cl})$, and is
strongly zero-dimensional;

\item  $C_p(X)$ $\models$ $S_{fin}(\Gamma_x, \Gamma_x)$;

\item  $C_p(X)$ $\models$ $S_{fin}(\mathcal{S}, \Gamma_x)$.

\end{enumerate}

\end{corollary}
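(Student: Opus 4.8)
The plan is to derive this corollary directly from Theorem \ref{th173} by specializing to the case where $X$ is a separable metrizable space, exactly as the earlier corollaries (e.g. Corollary \ref{th159}) are obtained from their parent theorems. The key observation is that for a separable metrizable $X$, two of the hypotheses appearing in Theorem \ref{th173} become automatic and hence can be dropped from the statement. First, every separable metrizable space $X$ satisfies $iw(X)=\aleph_0$, and in fact $C_p(X)$ is separable; moreover a metrizable space is first countable, so one expects $C_p(X)$ to be sequentially separable whenever it is separable — more precisely, for separable metrizable $X$ one has $X \models V$ automatically (take $f = \mathrm{id}_X$; every cozero-set of a metrizable space is $F_\sigma$), so by Theorem \ref{th38} the condition ``$C_p(X)$ is sequentially separable'' is free. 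Thus the clause ``and is sequentially separable'' in items (1), (5), (6) of Theorem \ref{th173}, and the clause ``$X \models V$'' in items (2), (3), (4), may all be suppressed.

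Concretely, I would argue as follows. Assume $X$ is separable metrizable. Then $iw(X) = \aleph_0$ and $X \models V$, so by Theorem \ref{th38} $C_p(X)$ is sequentially separable. Hence each of the six conditions in the present corollary is equivalent to the corresponding condition in Theorem \ref{th173} with the now-redundant hypotheses reinserted: item (1) here $\Leftrightarrow$ item (1) of Theorem \ref{th173} (since sequential separability is automatic), item (2) here $\Leftrightarrow$ item (2) there (since $X \models V$ is automatic), item (3) here $\Leftrightarrow$ item (3) there, item (4) here $\Leftrightarrow$ item (4) there, item (5) here $\Leftrightarrow$ item (5) there, and item (6) here $\Leftrightarrow$ item (6) there. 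Since Theorem \ref{th173} asserts the equivalence of its six items, the six items of the corollary are equivalent as well. One should also note explicitly that ``strongly zero-dimensional'' is retained in the corollary's items (2)--(4): it is not automatic for separable metrizable spaces (e.g. $\mathbb{R}$ is not strongly zero-dimensional in the relevant combinatorial sense needed here), so it cannot be dropped.

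The only genuinely substantive point — and the step I would treat most carefully — is the justification that $X \models V$ for every separable metrizable $X$: one must exhibit the condensation (here the identity onto $X$ itself, which is separable metric) and verify that the image of every cozero-set is $F_\sigma$, which holds because in a metrizable space every open set is an $F_\sigma$. Everything else is a routine transcription from Theorem \ref{th173}, using that $iw(X)=\aleph_0$ and $C_p(X)$ separable are immediate for separable metrizable $X$ (Theorem \ref{th31}). I therefore expect the proof to be short, with the bulk of the content being the remark that the $V$-property and separability hypotheses are vacuous in this setting, while strong zero-dimensionality must stay.

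\begin{proof} Since $X$ is separable metrizable, $iw(X)=\aleph_0$, so $C_p(X)$ is separable by Theorem \ref{th31}. Moreover $X \models V$: the identity map $X \to X$ is a condensation onto a separable metric space, and the image of any cozero-set $U$ of $X$ is $U$ itself, which is an $F_\sigma$-subset of the metrizable space $X$. Hence by Theorem \ref{th38}, $C_p(X)$ is sequentially separable. Consequently the clauses ``and is sequentially separable'' (in items (1), (5), (6)) and ``$X \models V$'' (in items (2), (3), (4)) of Theorem \ref{th173} hold automatically for such $X$, and the six conditions listed here are precisely the six conditions of Theorem \ref{th173} with those redundant hypotheses omitted. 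The equivalence of the six items now follows directly from Theorem \ref{th173}. (Note that strong zero-dimensionality is not automatic for separable metrizable spaces and is retained in items (2)--(4).) \end{proof}
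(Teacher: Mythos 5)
Your proposal is correct and follows exactly the route the paper intends: Corollary~\ref{th178} is the specialization of Theorem~\ref{th173} to separable metrizable $X$, where $X\models V$ (identity condensation, open sets of a metric space are $F_\sigma$) and hence, by Theorem~\ref{th38}, sequential separability of $C_p(X)$ are automatic, so the extra clauses can be dropped while strong zero-dimensionality must be kept. Your careful justification of the $V$-property is precisely the only nontrivial point, and it is handled correctly.
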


\section{$U_{fin}(\mathcal{S},\mathcal{D})$ }

Recall that $U_{fin}(\mathcal{S},\mathcal{D})$ is the selection
hypothesis:
 whenever $\mathcal{U}_1$, $\mathcal{U}_2, ... \in \mathcal{S}$, there are finite sets $\mathcal{F}_n\subseteq \mathcal{U}_n$,
$n\in \omega$, such that $\{\bigcup \mathcal{F}_n : n\in
\omega\}\in \mathcal{D}$. For a function space $C(X)$, we can
represent the condition  $\{\bigcup \mathcal{F}_n : n\in
\omega\}\in \mathcal{D}$ as $\forall$ $f\in C(X)$ $\forall$ a base
neighborhood $O(f)=<f, K, \epsilon >$ of $f$  where $\epsilon>0$
and $K=\{x_1, ..., x_k\}$ is a finite subset of $X$, there is
$n'\in \omega$ such that for each $j\in \{1,...,k\}$ there is
$g\in \mathcal{F}_{n'}$ such that $g(x_j)\in (f(x_j)-\epsilon,
f(x_j)+\epsilon)$.

Similarly, $U_{fin}(\Gamma_0,\Omega_0)$: whenever $\mathcal{S}_1$,
$\mathcal{S}_2, ... \in \Gamma_0$, there are finite sets
$\mathcal{F}_n\subseteq \mathcal{S}_n$, $n\in \omega$, such that
$\{\bigcup \mathcal{F}_n : n\in \omega\}\in \Omega_0$, i.e. for a
base neighborhood $O(f)=<f, K, \epsilon
>$ of $f={\bf 0}$  where $\epsilon>0$
and $K=\{x_1, ..., x_k\}$ is a finite subset of $X$, there is
$n'\in \omega$ such that for each $j\in \{1,...,k\}$ there is
$g\in \mathcal{F}_{n'}$ such that $g(x_j)\in (f(x_j)-\epsilon,
f(x_j)+\epsilon)$.

\begin{theorem}\label{th198} For a   space $X$, the following statements are
equivalent:

\begin{enumerate}

\item $C_p(X)$ $\models$ $U_{fin}(\Gamma_x,\Omega_x)$;

\item $X$ $\models$ $U_{fin}(\Gamma_F, \Omega)$.

\end{enumerate}

\end{theorem}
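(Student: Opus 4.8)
The plan is to mimic the structure of the proofs of the earlier pairs such as Theorem~\ref{th104} (for $S_{fin}$) and Proposition~\ref{pr23} (for $S_1$), since $U_{fin}$ is formally obtained from $S_{fin}$ by ``collapsing'' each finite selection into a single union. The two implications are handled by translating between $\gamma_F$-shrinkable $\gamma$-covers of $X$ and sequences in $C_p(X)$ converging to $\mathbf 0$, using the standard device: for a cozero set $U$ with a shrinking zero set $F(U)\subset U$, take the continuous $f\colon X\to[0,1]$ with $f^{-1}(0)=F(U)$ and $f^{-1}(1)=X\setminus U$; then a $\gamma$-cover by the zero sets $F(U)$ corresponds to a pointwise-null sequence of such functions, and conversely the ``small'' sets $f^{-1}(-\varepsilon,\varepsilon)$ form a $\gamma_F$-shrinkable $\gamma$-cover (shrunk by $f^{-1}[-\varepsilon/2,\varepsilon/2]$) once one discards the degenerate case in which some tail equals $X$ (which would instead give a uniformly convergent subsequence, making the statement trivial on that subsequence).

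For $(1)\Rightarrow(2)$, I would start with a sequence $\{\mathcal V_n\}_{n\in\omega}$ of $\gamma_F$-shrinkable $\gamma$-covers; pick a countable subfamily $\mathcal U_n=\{U_{n,m}:m\in\omega\}\subseteq\mathcal V_n$ (still $\gamma_F$-shrinkable) with shrinkings $F(U_{n,m})$, and let $f_{n,m}$ be the associated function as above. Since $\{F(U_{n,m}):m\in\omega\}$ is a $\gamma$-cover of $X$, the sequence $\{f_{n,m}\}_{m\in\omega}$ converges pointwise to $\mathbf 0$, so each $\{f_{n,m}:m\in\omega\}\in\Gamma_{\mathbf 0}$. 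Applying $U_{fin}(\Gamma_x,\Omega_x)$ at $x=\mathbf 0$ gives finite sets $\mathcal G_n\subseteq\{f_{n,m}:m\in\omega\}$ with $\{\bigcup\mathcal G_n:n\in\omega\}\in\Omega_{\mathbf 0}$; unwinding the ``$\Omega$ at $\mathbf 0$'' condition exactly as in the displayed reformulation before the theorem, for every finite $K\subseteq X$ and every $\varepsilon\in(0,1)$ there is $n'$ so that for each $x\in K$ some $f_{n',m}\in\mathcal G_{n'}$ has $|f_{n',m}(x)|<\varepsilon$, i.e. $x\in U_{n',m}$. Taking $\mathcal F_n=\{U_{n,m}:f_{n,m}\in\mathcal G_n\}$ therefore makes $\{\bigcup\mathcal F_n:n\in\omega\}$ an $\omega$-cover of $X$, which is what $U_{fin}(\Gamma_F,\Omega)$ asserts.

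For $(2)\Rightarrow(1)$, given $\{A_n\}_{n\in\omega}$ with each $A_n\in\Gamma_{\mathbf 0}$, set $U_{n,f}=\{x:|f(x)|<2^{-n}\}$ and $Z_{n,f}=\{x:|f(x)|\le 2^{-(n+1)}\}$ for $f\in A_n$, and $\mathcal U_n=\{U_{n,f}:f\in A_n\}$, dealing first with the trivial case where infinitely many $\mathcal U_n$ contain $X$ (yielding a uniformly null subsequence, hence an element of $\Omega_{\mathbf 0}$) and otherwise assuming $U_{n,f}\neq X$. Since $A_n$ converges to $\mathbf 0$, $\{Z_{n,f}:f\in A_n\}$ is a $\gamma$-cover of zero sets witnessing that $\mathcal U_n\in\Gamma_F$. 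Applying $U_{fin}(\Gamma_F,\Omega)$ yields finite $\mathcal F_n\subseteq\mathcal U_n$ with $\{\bigcup\mathcal F_n:n\in\omega\}$ an $\omega$-cover; letting $\mathcal G_n=\{f\in A_n:U_{n,f}\in\mathcal F_n\}$ I then check $\{\bigcup\mathcal G_n:n\in\omega\}\in\Omega_{\mathbf 0}$: for finite $K\subseteq X$ and $\varepsilon>0$ choose $n'$ with $K\subseteq\bigcup\mathcal F_{n'}$ and $2^{-n'}<\varepsilon$; then for each $x\in K$ some $U_{n',f}\in\mathcal F_{n'}$ contains $x$, so $|f(x)|<2^{-n'}<\varepsilon$, giving the required approximation at $\mathbf 0$. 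Hence $C_p(X)\models U_{fin}(\Gamma_x,\Omega_x)$.

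The main obstacle, as in the cited proofs, is not any deep argument but the bookkeeping around the degenerate ``$X$ appears in the cover'' case and making sure the translation in the $\gamma_F$ direction genuinely produces a $\gamma_F$-shrinkable $\gamma$-cover rather than just an $\omega$-cover --- this is why one needs the explicit zero-set shrinkings $Z_{n,f}$ and why the constants $2^{-n}$ versus $2^{-(n+1)}$ matter. A secondary point of care is that $U_{fin}$ allows the finite sets $\mathcal F_n$ to be empty for many $n$, so ``$\omega$-cover'' must be read with the standing convention $X\notin\{\bigcup\mathcal F_n\}$; since we arranged each $U_{n,f}\neq X$ and take finite unions, $\bigcup\mathcal F_n\neq X$ is automatic, so this causes no trouble.
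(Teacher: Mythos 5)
Your argument is correct and follows essentially the paper's own proof: the same translation between $\gamma_F$-shrinkable $\gamma$-covers and pointwise-null sequences (functions vanishing on $F(U)$ and equal to $1$ off $U$ in one direction; cozero sets $\{x:|f(x)|<\varepsilon\}$ shrunk by the zero sets $\{x:|f(x)|\le\varepsilon'\}$ in the other), the same handling of the degenerate case where some sets equal $X$, and the same unwinding of the $\Omega_{\mathbf 0}$ condition for finite unions. One small correction: your closing claim that $\bigcup\mathcal{F}_n\neq X$ follows because each $U_{n,f}\neq X$ is false in general (a finite union of proper open sets can equal $X$), but this is harmless here, since in $(1)\Rightarrow(2)$ the hypothesis of $U_{fin}$ that no $\mathcal{V}_n$ admits a finite subcover already rules out such unions, and in $(2)\Rightarrow(1)$ the $\omega$-cover is supplied by the hypothesis, so nothing needs to be verified.
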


\begin{proof}

 $(1)\Rightarrow(2)$. Let $\{\mathcal{V}_i\}\subset \Gamma_F$. For each $i\in \omega$ and $\mathcal{U}_i=\{U^m_i\}\subset \mathcal{V}_i$
 we consider $\mathcal{K}_i =\{ f^m_i\in
C(X) : f^m_i\upharpoonright F(U^{m}_i)=0$ and
$f^m_i\upharpoonright (X\setminus U^{m}_i)=1$ for $m \in \omega
\}$. Note that $\mathcal{U}_i\in \Gamma_F$ for each $i\in \omega$.

Since $\mathcal{F}_i=\{F(U^{m}_i): m\in \omega\}$ is a
$\gamma$-cover of zero-sets of $X$, we have that $\mathcal{K}_i$
converge to $\bf{0}$ for each $i\in \omega$. By $C_p(X)$ $\models$
$U_{fin}(\Gamma_x,\Omega_x)$, there are finite sets
$F_i=\{f^{m_1}_i, ..., f^{m_{s(i)}}_i\}\subseteq \mathcal{K}_i$
such that $\{\bigcup F_i : i\in \omega\}\in \Omega_{0}$. Note that
$\{\bigcup \{U^{m_1}_i, ..., U^{m_{s(i)}}_i\}  : i\in \omega\}\in
\Omega$.

 $(2)\Rightarrow(1)$. Let $X$ $\models$ $U_{fin}(\Gamma_F, \Omega)$
and $A_i\in \Gamma_{\bf{0}}$ for each $i\in \omega$. Consider
$\mathcal{U}_i=\{U_{i,f}=f^{-1}(-\frac{1}{i}, \frac{1}{i}): f\in
A_i \}$ for each $i\in \omega$. Without loss of generality we can
assume that a set $U_{i,f}\neq X$ for any $i\in \omega$ and $f\in
A_i$. Otherwise there is sequence $\{f_{i_k}\}_{k\in \omega}$ such
that $\{f_{i_k}\}_{k\in \omega}$ uniform converge to $\bf{0}$ and
$\{f_{i_k}: k\in \omega\}\in \Omega_{\bf 0}$.

Note that $\mathcal{F}_i=\{F_{i,m}\}_{m\in
\omega}=\{f^{-1}_{i,m}[-\frac{1}{i+1}, \frac{1}{i+1}]: m\in \omega
\}$ is $\gamma$-cover of zero-sets of $X$ and $F_{i,m}\subset
U_{i,m}$ for each $i,m\in \omega$. It follows that
$\mathcal{U}_i\in \Gamma_F$ for each $i\in \omega$.

By $X$ $\models$ $U_{fin}(\Gamma_F, \Omega)$, there is a sequence
$\{U_{i,m(1)}, U_{i,m(2)}, ..., U_{i,m(i)}: i\in\omega\}$ such
that for each $i$ and $k\in\{m(1),...,m(i)\}$, $U_{i,m(k)}\in
\mathcal{U}_i$, and

$\{\bigcup \{U_{i,m(1)}, ..., U_{i,m(i)}\}: i\in \omega\}\in
\Omega$.

We claim that $\{\bigcup \{f_{i,m(1)}, ..., f_{i,m(i)}\}: i\in
\omega\}\in \Omega_0$.

 Let $W=<$ $\bf 0$ $, K, \epsilon>$ be a base neighborhood of
$\bf 0$ where $\epsilon>0$ and $K=\{x_1, ..., x_s\}$ is a finite
subset of $X$, then there are $i_0, i_1\in \omega$ such that
$\frac{1}{i_0}<\epsilon$, $i_1>i_0$ and $\bigcup_{k=m(1)}^{m(i_1)}
U_{i_1,k}\supset K$. It follows that for each $j\in \{1,...,s\}$
there is $g\in \{f_{i_1,m(1)}, ..., f_{i_1,m(i_1)}\}$ such that
$g(x_j)\in (-\epsilon, \epsilon)$.

\end{proof}

\begin{theorem}\label{th195} For a   space $X$, the following statements are
equivalent:

\begin{enumerate}

\item $C_p(X)$ $\models$ $U_{fin}(\mathcal{S},\mathcal{D})$ and is
sequentially separable;

\item $X$ $\models$ $U_{fin}(\Gamma_F, \Omega)$, $X$ $\models$
$V$;

\item $C_p(X)$ $\models$ $U_{fin}(\Gamma_x, \Omega_x)$ and is
sequentially separable;

\item $C_p(X)$ $\models$ $U_{fin}(\mathcal{S}, \Omega_x)$ and is
sequentially separable.

\end{enumerate}

\end{theorem}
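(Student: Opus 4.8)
The plan is to follow the pattern established for Theorems \ref{th50}, \ref{th105} and \ref{th158}, proving the cycle $(1)\Rightarrow(2)\Rightarrow(3)\Rightarrow(4)\Rightarrow(1)$. Two of these steps are cheap. The step $(2)\Rightarrow(3)$ is a combination of facts already in hand: $X\models U_{fin}(\Gamma_F,\Omega)$ is equivalent to $C_p(X)\models U_{fin}(\Gamma_x,\Omega_x)$ by Theorem \ref{th198}, and $X\models V$ is equivalent to sequential separability of $C_p(X)$ by Theorem \ref{th38}. The step $(3)\Rightarrow(4)$ is immediate: given a sequence $\{S_n\}_{n\in\omega}$ of sequentially dense subsets of $C_p(X)$ and a point $x$, sequential density of each $S_n$ lets us pick $A_n\subseteq S_n$ with $A_n\in\Gamma_x$; applying $U_{fin}(\Gamma_x,\Omega_x)$ to $\{A_n\}_{n\in\omega}$ gives finite $\mathcal F_n\subseteq A_n\subseteq S_n$ with $\{\bigcup\mathcal F_n:n\in\omega\}\in\Omega_x$, which is precisely $U_{fin}(\mathcal S,\Omega_x)$; sequential separability is retained verbatim.

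For $(1)\Rightarrow(2)$ I would reuse the gluing construction of the earlier sections. Fix $\{\mathcal V_i\}_{i\in\omega}\subseteq\Gamma_F$ and, by sequential separability, a countable sequentially dense set $\mathcal S=\{h_m:m\in\omega\}$ in $C_p(X)$. For each $i$ choose a countable subfamily $\mathcal U_i=\{U^m_i:m\in\omega\}\subseteq\mathcal V_i$; it is again a member of $\Gamma_F$, with $\{F(U^m_i):m\in\omega\}$ a $\gamma$-cover of $X$ by zero-sets. Put $\mathcal S_i=\{f^m_i\in C(X): f^m_i\upharpoonright F(U^m_i)=h_m$ and $f^m_i\upharpoonright (X\setminus U^m_i)=1,\ m\in\omega\}$. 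As in the proofs of Theorems \ref{th105} and \ref{th158}, each $\mathcal S_i$ is sequentially dense: if $\{h_{m_s}\}_s\subseteq\mathcal S$ converges to $h\in C(X)$, then every finite $K\subseteq X$ satisfies $K\subseteq F(U^m_i)$ for all but finitely many $m$, so $\{f^{m_s}_i\}_s$ converges to $h$ as well. Now apply $C_p(X)\models U_{fin}(\mathcal S,\mathcal D)$ to $\{\mathcal S_i\}_{i\in\omega}$, obtaining finite sets $\mathcal F_i=\{f^j_i:j\in E_i\}\subseteq\mathcal S_i$ (with $E_i\subseteq\omega$ finite) such that $\{\bigcup\mathcal F_i:i\in\omega\}\in\mathcal D$ in the sense spelled out at the beginning of this section. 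The crucial point is that $f^j_i(x)\in(-1,1)$ forces $x\in U^j_i$, since $f^j_i$ equals $1$ off $U^j_i$ (note $U^j_i\neq X$ automatically, as covers are nontrivial). Hence, testing density of $\{\bigcup\mathcal F_i:i\in\omega\}$ at $\bf{0}$, for a finite $K\subseteq X$ and $\epsilon=1$, gives an $i'$ such that every point of $K$ lies in $U^j_{i'}$ for some $j\in E_{i'}$, i.e.\ $K\subseteq\bigcup_{j\in E_{i'}}U^j_{i'}$. Thus $\{\bigcup_{j\in E_i}U^j_i:i\in\omega\}$ is an $\omega$-cover of $X$ and $X\models U_{fin}(\Gamma_F,\Omega)$; finally $X\models V$ by Theorem \ref{th38}, since $C_p(X)$ is sequentially separable.

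For $(4)\Rightarrow(1)$ I would argue as in $(4)\Rightarrow(1)$ of Theorem \ref{th105}. Sequential separability provides a countable dense set $D=\{d_n:n\in\omega\}$ of $C_p(X)$. Given a sequence of sequentially dense subsets of $C_p(X)$, re-enumerate it as $\{S_{n,m}:n,m\in\omega\}$. For each fixed $n$, apply $U_{fin}(\mathcal S,\Omega_{d_n})$ to the row $\{S_{n,m}:m\in\omega\}$ to get finite $\mathcal F_{n,m}\subseteq S_{n,m}$ with $\{\bigcup\mathcal F_{n,m}:m\in\omega\}\in\Omega_{d_n}$. Then $\{\bigcup\mathcal F_{n,m}:n,m\in\omega\}\in\mathcal D$: given $f\in C(X)$ and a basic neighbourhood $<f,K,\epsilon>$, choose $d_n$ with $|f(x)-d_n(x)|<\epsilon/2$ on $K$; the $\Omega_{d_n}$-property yields $m$ such that each point of $K$ is approximated within $\epsilon/2$ of $d_n$ by some member of $\mathcal F_{n,m}$, hence within $\epsilon$ of $f$. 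So $C_p(X)\models U_{fin}(\mathcal S,\mathcal D)$, and sequential separability is part of the hypothesis.

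The only genuinely substantive step is $(1)\Rightarrow(2)$, and within it the delicate parts are the same ones dealt with repeatedly above: checking that the glued families $\mathcal S_i$ are sequentially dense, and converting the "coordinatewise" density of $\{\bigcup\mathcal F_i:i\in\omega\}$ on the $C_p(X)$ side into the genuine $\omega$-cover property on the $X$ side. The latter is harmless because, although different functions from $\mathcal F_{i'}$ may be needed to approximate different coordinates of $K$, on the cover side we only need each such coordinate to belong to the union of the corresponding sets $U^j_{i'}$. Everything else is routine bookkeeping with the definitions of $U_{fin}(\mathcal S,\mathcal D)$ and $U_{fin}(\Gamma_0,\Omega_0)$ recalled at the start of this section, together with the equivalences supplied by Theorems \ref{th198} and \ref{th38}.
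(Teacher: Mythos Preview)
Your proposal is correct and follows essentially the same approach as the paper: the same cycle $(1)\Rightarrow(2)\Rightarrow(3)\Rightarrow(4)\Rightarrow(1)$, the same gluing construction $\mathcal S_i=\{f^m_i:f^m_i\upharpoonright F(U^m_i)=h_m,\ f^m_i\upharpoonright(X\setminus U^m_i)=1\}$ for $(1)\Rightarrow(2)$, the same appeal to Theorems \ref{th198} and \ref{th38} for $(2)\Rightarrow(3)$, and the same re-enumeration $\{S_{n,m}\}$ against a countable dense $\{d_n\}$ for $(4)\Rightarrow(1)$. If anything you supply more detail than the paper does (the explicit justification of $(3)\Rightarrow(4)$ and the triangle-inequality verification at the end of $(4)\Rightarrow(1)$), but the substance is identical.
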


\begin{proof} $(1)\Rightarrow(2)$. Suppose that $C_p(X)$ $\models$ $U_{fin}(\mathcal{S},\mathcal{D})$ and is sequentially separable.
Let $\{\mathcal{V}_i\}\subset \Gamma_F$ and $\mathcal{S}=\{h_j:
j\in \omega\}$  be a countable sequentially dense subset of
$C_p(X)$.

For each $i\in \omega$ and $\mathcal{U}_i=\{U^j_i: j\in
\omega\}\subset \mathcal{V}_i$
 we consider $\mathcal{S}_i =\{ f^j_i\in
C(X) : f^j_i\upharpoonright F(U^{j}_i)=h_j$ and
$f^j_i\upharpoonright (X\setminus U^{j}_i)=1$ for $j \in \omega
\}$.

Since $\mathcal{F}_i=\{F(U^{m}_i): m\in \omega\}$ is a
$\gamma$-cover of $X$, we have that $\mathcal{S}_i$ is a countable
sequentially dense subset of $C_p(X)$ for each $i\in \omega$.

By $C_p(X)$ $\models$ $U_{fin}(\mathcal{S},\mathcal{D})$, there
are finite sets $F_i=\{f^{m_1}_i, ..., f^{m_{s(i)}}_i\}\subseteq
\mathcal{S}_i$ such that $\{\bigcup F_i : i\in \omega\}\in
\mathcal{D}$. Note that $\{\bigcup \{U^{m_1}_i, ...,
U^{m_{s(i)}}_i\} : i\in \omega\}\in \Omega$. By Theorem
\ref{th38}, $X$ $\models$ $V$.

$(2)\Rightarrow(3)$. By Theorem \ref{th38} and Theorem
\ref{th198}.

$(3)\Rightarrow(4)$ is immediate.

$(4)\Rightarrow(1)$. Suppose that $C_p(X)$ is sequentially
separable and $C_p(X)$ $\models$ $U_{fin}(\mathcal{S}, \Omega_x)$.

Let $D=\{d_n: n\in \omega \}$ be a dense subspace of $C_p(X)$.
Given a sequence of sequentially dense subspace of $C_p(X)$,
enumerate it as $\{S_{n,m}: n,m \in \omega \}$. For each $n\in
\omega$, pick

$\mathcal{F}_{n,m}=\{d_{n,m,1},..., d_{n,m,k(n,m)} \} \subset
S_{n,m}$ so that $d_n\in \overline{\{\bigcup \mathcal{F}_{n,m}:
m\in \omega\}}$, i.e. for a base neighborhood $O(d_n)=<d_n, K,
\epsilon
>$ of $d_n$  where $\epsilon>0$
and $K=\{x_1, ..., x_k\}$ is a finite subset of $X$, there is
$m'\in \omega$ such that for each $j\in \{1,...,k\}$ there is
$g\in \mathcal{F}_{n,m'}$ such that $g(x_j)\in (d_n(x_j)-\epsilon,
d_n(x_j)+\epsilon)$.

 Then $\{\bigcup \mathcal{F}_{n,m}: m,n\in
\omega\}\in \mathcal{D}$.

\end{proof}

\begin{theorem}\label{th1951} For a separable metrizable space $X$, the following statements are
equivalent:

\begin{enumerate}

\item $C_p(X)$ $\models$ $U_{fin}(\mathcal{S},\mathcal{D})$;

\item $X$ $\models$ $U_{fin}(\Gamma, \Omega)$;

\item $C_p(X)$ $\models$ $U_{fin}(\Gamma_x, \Omega_x)$;

\item $C_p(X)$ $\models$ $U_{fin}(\mathcal{S}, \Omega_x)$.

\end{enumerate}

\end{theorem}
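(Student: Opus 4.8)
The plan is to obtain Theorem \ref{th1951} as the ``separable metrizable'' specialization of Theorem \ref{th195} (together with Theorem \ref{th198}), once two simplifications are justified. First, every separable metrizable space $X$ has the $V$-property: the identity map condenses $X$ onto the separable metric space $X$, and, metrizable spaces being perfectly normal, every cozero-set of $X$ is an $F_\sigma$; hence $X\models V$, so by Velichko's Theorem \ref{th38} $C_p(X)$ is sequentially separable. Thus the qualifiers ``and is sequentially separable'' in Theorem \ref{th195}$(1),(3),(4)$ and ``$X\models V$'' in $(2)$ are automatically met and may be dropped. Second, I would show that for metrizable $X$ the properties $U_{fin}(\Gamma_F,\Omega)$ and $U_{fin}(\Gamma,\Omega)$ coincide; granting this, statement $(2)$ of Theorem \ref{th195} becomes exactly statement $(2)$ of Theorem \ref{th1951}, and the listed equivalences follow.

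To prove $U_{fin}(\Gamma_F,\Omega)\Leftrightarrow U_{fin}(\Gamma,\Omega)$ for metrizable $X$: the implication $\Leftarrow$ is trivial since $\Gamma_F\subseteq\Gamma$. For $\Rightarrow$, fix $d$ a compatible metric, assume $X\models U_{fin}(\Gamma_F,\Omega)$, and let $\mathcal U_1,\mathcal U_2,\dots\in\Gamma$ with no $\mathcal U_n$ admitting a finite subcover. Passing to a countable infinite subfamily of each (still a $\gamma$-cover with no finite subcover) and enumerating it as $\{U_{n,k}:k\in\omega\}$, set $V_{n,k}=\bigcup_{j\le k}U_{n,j}$ and $\mathcal V_n=\{V_{n,k}:k\in\omega\}$. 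Each $\mathcal V_n$ is an increasing family of cozero-sets with $\bigcup_k V_{n,k}=X$ and $V_{n,k}\ne X$ for every $k$ (a ``$V_{n,k}=X$'' would be a finite subcover of $\mathcal U_n$); hence $\mathcal V_n$ is an open $\gamma$-cover with no finite subcover. Moreover the zero-sets $F_{n,k}=\{x:d(x,X\setminus V_{n,k})\ge 1/k\}$ satisfy $F_{n,k}\subseteq V_{n,k}$, $F_{n,k}\subseteq F_{n,k+1}$, $F_{n,k}\ne X$, and $\bigcup_k F_{n,k}=X$, so $\{F_{n,k}:k\in\omega\}$ exhibits $\mathcal V_n$ as $\gamma_F$-shrinkable, i.e. $\mathcal V_n\in\Gamma_F$. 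Applying $U_{fin}(\Gamma_F,\Omega)$ to $(\mathcal V_n)_n$ produces finite $\mathcal F_n\subseteq\mathcal V_n$ with $\{\bigcup\mathcal F_n:n\in\omega\}\in\Omega$; writing $k_n=\max\{k:V_{n,k}\in\mathcal F_n\}$ we get $\bigcup\mathcal F_n=V_{n,k_n}$, and the finite set $\mathcal G_n=\{U_{n,j}:j\le k_n\}\subseteq\mathcal U_n$ has $\bigcup\mathcal G_n=V_{n,k_n}=\bigcup\mathcal F_n\ne X$, whence $\{\bigcup\mathcal G_n:n\in\omega\}\in\Omega$. This is $X\models U_{fin}(\Gamma,\Omega)$.

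Assembling: with ``sequentially separable'' and ``$X\models V$'' automatic and with $U_{fin}(\Gamma_F,\Omega)$ replaced by $U_{fin}(\Gamma,\Omega)$, Theorem \ref{th195} directly yields $(1)\Leftrightarrow(2)\Leftrightarrow(3)\Leftrightarrow(4)$ of Theorem \ref{th1951}; if one prefers, $(3)\Leftrightarrow(2)$ can also be quoted from Theorem \ref{th198}. I expect the only point requiring care to be the second paragraph, specifically the double bookkeeping that $(i)$ replacing $\mathcal U_n$ by its increasing hull $\mathcal V_n$ neither creates a finite subcover nor destroys the $\gamma$-cover property, and $(ii)$ a finite selection from $\mathcal V_n$ has the same union as an honest finite subfamily of $\mathcal U_n$; the zero-set shrinking $F_{n,k}$ and the reduction to a countable subcover are routine for metric spaces.
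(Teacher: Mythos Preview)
Your proposal is correct and follows the paper's intended approach: the paper states Theorem~\ref{th1951} with no proof, treating it as the immediate separable-metrizable specialization of Theorem~\ref{th195}. You have filled in precisely the two points the paper leaves tacit---that a separable metrizable $X$ automatically satisfies the $V$-property (so $C_p(X)$ is sequentially separable by Theorem~\ref{th38}), and that $U_{fin}(\Gamma_F,\Omega)$ coincides with the classical $U_{fin}(\Gamma,\Omega)$ for metrizable $X$---and your monotone-hull argument for the latter is sound. One cosmetic remark: in the shrinking step the assignment $V_{n,k}\mapsto F_{n,k}$ is, strictly speaking, indexed by $k$ rather than by the set $V_{n,k}$, so if some $V_{n,k}$ coincide you should pick a single $F$ for each distinct $V$; this does not affect the $\gamma$-cover property of the zero-set family and is harmless.
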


\section{$U_{fin}(\mathcal{S},\mathcal{S})$}

Recall that $U_{fin}(\mathcal{S},\mathcal{S})$ is the selection
hypothesis:
 whenever $\mathcal{U}_1$, $\mathcal{U}_2, ... \in \mathcal{S}$, there are finite sets $\mathcal{F}_n\subseteq \mathcal{U}_n$,
$n\in \omega$, such that $\{\bigcup \mathcal{F}_n : n\in
\omega\}\in \mathcal{S}$. For a function space $C(X)$, we can
represent the condition  $\{\bigcup \mathcal{F}_n : n\in
\omega\}\in \mathcal{S}$ as $\forall$ $f\in C(X)$ $\forall$ a base
neighborhood of $f$ $O(f)=<f, K, \epsilon >$ of $f$  where
$\epsilon>0$ and $K=\{x_1, ..., x_k\}$ is a finite subset of $X$,
there is $n'\in \omega$ such that for each $n>n'$ and $j\in
\{1,...,k\}$ there is $g\in \mathcal{F}_{n}$ such that $g(x_j)\in
(f(x_j)-\epsilon, f(x_j)+\epsilon)$.

Similarly, $U_{fin}(\Gamma_0,\Gamma_0)$: whenever $\mathcal{S}_1$,
$\mathcal{S}_2, ... \in \Gamma_0$, there are finite sets
$\mathcal{F}_n\subseteq \mathcal{S}_n$, $n\in \omega$, such that
$\{\bigcup \mathcal{F}_n : n\in \omega\}\in \Gamma_0$, i.e. for a
base neighborhood $O(f)=<f, K, \epsilon
>$ of $f={\bf 0}$  where $\epsilon>0$
and $K=\{x_1, ..., x_k\}$ is a finite subset of $X$, there is
$n'\in \omega$ such that for each $n>n'$ and $j\in \{1,...,k\}$
there is $g\in \mathcal{F}_{n}$ such that $g(x_j)\in
(f(x_j)-\epsilon, f(x_j)+\epsilon)$.

\begin{theorem}\label{th194} For a   space $X$, the following statements are
equivalent:

\begin{enumerate}

\item $C_p(X)$ $\models$ $U_{fin}(\Gamma_x,\Gamma_x)$;

\item $X$ $\models$ $U_{fin}(\Gamma_{F}, \Gamma)$.

\end{enumerate}

\end{theorem}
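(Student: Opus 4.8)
The plan is to mimic the structure of the proofs of Theorem~\ref{th198} and Proposition~\ref{pr28}, translating the selection hypothesis $U_{fin}(\Gamma_F,\Gamma)$ on $X$ into $U_{fin}(\Gamma_{\bf 0},\Gamma_{\bf 0})$ on $C_p(X)$ and back. Recall (as in Proposition~\ref{pr172}) that $C_p(X) \models U_{fin}(\Gamma_x,\Gamma_x)$ is the statement that for every sequence $\{A_i\}_{i\in\omega}$ with $A_i\in\Gamma_{\bf 0}$ there are finite $F_i\subseteq A_i$ with $\{\bigcup F_i : i\in\omega\}\in\Gamma_{\bf 0}$; by translation-invariance of $C_p(X)$ it suffices to work with the point $\bf 0$.

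For $(1)\Rightarrow(2)$: given $\{\mathcal{V}_i\}\subseteq\Gamma_F$, pick for each $i$ a countable subfamily $\mathcal{U}_i=\{U^m_i : m\in\omega\}\subseteq\mathcal{V}_i$, choose zero-set shrinkings $F(U^m_i)\subseteq U^m_i$, and define $f^m_i\in C(X)$ with $f^m_i\upharpoonright F(U^m_i)=0$ and $f^m_i\upharpoonright(X\setminus U^m_i)=1$ (Tychonoff). Since $\{F(U^m_i):m\in\omega\}$ is a $\gamma$-cover of $X$ by zero-sets, the sequence $\{f^m_i\}_{m\in\omega}$ converges pointwise to $\bf 0$, so $\{f^m_i:m\in\omega\}\in\Gamma_{\bf 0}$. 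Apply $U_{fin}(\Gamma_x,\Gamma_x)$ to obtain finite sets $F_i\subseteq\{f^m_i:m\in\omega\}$ with $\{\bigcup F_i : i\in\omega\}\in\Gamma_{\bf 0}$; the corresponding finite families $\mathcal{F}_i\subseteq\mathcal{U}_i$ of the $U^m_i$'s then satisfy $\{\bigcup\mathcal{F}_i : i\in\omega\}\in\Gamma$: given a finite $K\subseteq X$ and the neighborhood $<{\bf 0},K,1>$, eventual membership of some $g\in F_i$ in this neighborhood forces $K\subseteq\bigcup\mathcal{F}_i$ for all large $i$.

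For $(2)\Rightarrow(1)$: given $A_i\in\Gamma_{\bf 0}$, set $U_{i,f}=f^{-1}(-\tfrac1i,\tfrac1i)$ and $F_{i,f}=f^{-1}[-\tfrac1{i+1},\tfrac1{i+1}]$ for $f\in A_i$, so $F_{i,f}$ is a zero-set with $F_{i,f}\subseteq U_{i,f}$. As in Theorem~\ref{th198}, either infinitely many $\mathcal{U}_i$ contain $X$ (yielding a subsequence uniformly converging to $\bf 0$, which handles the case trivially since $\Gamma_{\bf 0}$ already contains a tail), or we may assume $U_{i,f}\neq X$ always; then $\{F_{i,f}:f\in A_i\}$ is a $\gamma$-cover of zero-sets, so $\mathcal{U}_i=\{U_{i,f}:f\in A_i\}\in\Gamma_F$. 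Apply $U_{fin}(\Gamma_F,\Gamma)$ to get finite $\mathcal{F}_i\subseteq\mathcal{U}_i$ with $\{\bigcup\mathcal{F}_i:i\in\omega\}\in\Gamma$, and take $F_i=\{f\in A_i : U_{i,f}\in\mathcal{F}_i\}$. For a finite $K\subseteq X$ and $\epsilon>0$, choose $i_0$ with $\tfrac1{i_0}<\epsilon$ and (since $\{\bigcup\mathcal{F}_i\}\in\Gamma$) an $i_1\ge i_0$ with $K\subseteq\bigcup\mathcal{F}_i$ for all $i\ge i_1$; for each such $i$ and each $x\in K$ there is $f\in F_i$ with $x\in U_{i,f}$, hence $|f(x)|<\tfrac1i<\epsilon$, giving $\{\bigcup F_i:i\in\omega\}\in\Gamma_{\bf 0}$, i.e. $C_p(X)\models U_{fin}(\Gamma_x,\Gamma_x)$.

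The main obstacle is bookkeeping in the $(2)\Rightarrow(1)$ direction: one must verify that the $\gamma$-cover property of $\{\bigcup\mathcal{F}_i\}$ genuinely delivers the $\Gamma_{\bf 0}$ property (not merely $\Omega_{\bf 0}$) — that is, that for \emph{all sufficiently large} $i$ (not just one $i$) every point of $K$ is covered, so that a single element of $F_i$ is $\epsilon$-close to $\bf 0$ at each point of $K$ for every large $i$. This uses both that $\mathcal{F}_i$ is finite (so only finitely many functions are examined at stage $i$) and that $\tfrac1i\to 0$; the degenerate case where some $\mathcal{U}_i=\{X\}$ must be dispatched separately, exactly as in the cited proofs. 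The equivalence is then recorded, and — paralleling the remark that $S_{fin}(\Gamma_F,\Gamma)=S_1(\Gamma_F,\Gamma)$ and Theorem~2 of \cite{scheep} — one may further note $U_{fin}(\Gamma_x,\Gamma_x)$ here coincides with the $S_1$-version, though that is not needed for the statement as phrased.
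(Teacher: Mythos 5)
Your proof is correct and follows essentially the same route as the paper's: in one direction the same auxiliary functions $f^m_i$ vanishing on the zero-set shrinkings $F(U^m_i)$ and equal to $1$ off $U^m_i$, and in the other the cozero sets $f^{-1}(-\frac1i,\frac1i)$ with zero-set shrinkings $f^{-1}[-\frac1{i+1},\frac1{i+1}]$, with the same $\gamma$-cover/$\Gamma_{\bf 0}$ bookkeeping under the coordinatewise (sheaf) reading of $U_{fin}(\Gamma_{\bf 0},\Gamma_{\bf 0})$. The only point at which you are slightly looser than necessary is the degenerate case: when the set of indices $i$ with $X\in\mathcal{U}_i$ is infinite but not cofinite, the uniformly converging singletons alone do not give a tail, and one must merge them with selections obtained by applying $U_{fin}(\Gamma_F,\Gamma)$ to the remaining (genuine) covers $\mathcal{U}_i$ --- but this is exactly the level of detail at which the paper itself dismisses that case "without loss of generality."
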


\begin{proof} $(1)\Rightarrow(2)$.  Let $\{\mathcal{V}_i\}\subset \Gamma_F$. For each $i\in \omega$ we consider a
 subset $\mathcal{S}_i$ of $C_p(X)$ and $\mathcal{U}_i=\{
U^{m}_i\}_{m\in \omega}\subset \mathcal{V}_i$ where

$\mathcal{S}_i =\{ f^m_i\in C(X) : f^m_i\upharpoonright
F(U^{m}_i)=0$ and $f^m_i\upharpoonright (X\setminus U^{m}_i)=1$
for $m \in \omega \}$.

Since $\mathcal{F}_i=\{F(U^{m}_i): m\in \omega\}$ is a
$\gamma$-cover of $X$, we have that $\mathcal{S}_i$ converge to
${\bf 0}$, i.e. $\mathcal{S}_i\in \Gamma_0$ for each $i\in
\omega$.

Since  $C(X)$ $\models$ $U_{fin}(\Gamma_x,\Gamma_x)$, there is a
sequence $\{\mathcal{F}_i\}_{i\in \omega}=\{f^{m_1}_{i},...,
f^{m_{k(i)}}_{i} : i\in\omega\}$ such that for each $i$,
$\mathcal{F}_i\subseteq \mathcal{S}_i$, and $\{\bigcup
\mathcal{F}_i\}_{i\in \omega}\in \Gamma_0$.

Consider a sequence $\{W_i\}_{i\in \omega}=\{U^{m_1}_{i},
...,U^{m_{k(i)}}_{i} : i\in \omega\}$.

(a). $W_i \subset \mathcal{U}_{i}$.

(b). $\{\bigcup W_i: i\in \omega\}$ is a $\gamma$-cover of $X$.

 Let $K=\{x_1,...,x_s\}$ be a finite subset of $X$ and $U=<$ $\bf{0}$ $,
K, \frac{1}{2}>$ be a base neighborhood of $\bf{0}$, then there
exists $i_0\in \omega$ such that for each $i>i_0$ and

$j\in \{1,...,s\}$ there is $g\in \mathcal{F}_{i}$ such that
$g(x_j)\in (-\frac{1}{2}, \frac{1}{2})$.

 It follows that
$K\subset \bigcup\limits_{j=1}^{k(i)} U^{m_j}_{i}$ for $i>i_0$. We
thus get $X$ $\models$ $U_{fin}(\Gamma_{F}, \Gamma)$.

$(2)\Rightarrow(1)$.  Fix $\{S_i: i\in \omega\}\subset \Gamma_0$
where $S_i=\{f^{i}_k: k\in \omega\}$ for each $i\in \omega$.

For each $i,k\in \omega$, we put $U_{i,k}=\{x\in X :
|f^{i}_k(x)|<\frac{1}{i}\}$, $Z_{i,k}=\{x\in X :
|f^{i}_k(x)|\leq\frac{1}{i+1}\}$.

Each $U_{i,k}$ (resp., $Z_{i,k}$) is a cozero-set (resp.,
zero-set) in $X$ with $Z_{i,k}\subset U_{i,k}$. Let
$\mathcal{U}_i=\{ U_{i,k} : k\in \omega\}$ and $\mathcal{Z}_i=\{
Z_{i,k} : k\in \omega\}$. So without loss of generality, we may
assume $U_{i,k}\neq X$ for each $i,k\in \omega$. We can easily
check that the condition $f^{i}_k \rightarrow \bf{0}$ ($k
\rightarrow \infty $) implies that $\mathcal{Z}_i$ is a
$\gamma$-cover of $X$.

Since  $X$ $\models$ $U_{fin}(\Gamma_{F}, \Gamma)$ there is a
sequence $\{\mathcal{F}_i\}_{i\in
\omega}=\{U_{i,k_1},...,U_{i,k_i}: i\in\omega\}$ such that for
each $i$, $\mathcal{F}_i \subset \mathcal{U}_i$, and $\{\bigcup
\mathcal{F}_i: i\in\omega \}$ is an element of $\Gamma$.

Let $K=\{x_1,...,x_s\}$ be a finite subset of $X$, $\epsilon>0$,
and $U=<$ $\bf{0}$, $K, \epsilon>$ be a base neighborhood of
$\bf{0}$, then there exists $i'\in \omega$ such that for each
$i>i'$ $K\subset \bigcup \mathcal{F}_i$. It follow that for each
$i>i'$ and $j\in \{1,...,s\}$ there is $g\in \mathcal{S}_{i}$ such
that $g(x_j)\in (-\epsilon, \epsilon)$. So $C_p(X)$ $\models$
$U_{fin}(\Gamma_x,\Gamma_x)$.

\end{proof}

\begin{theorem}\label{th153} For a   space $X$, the following statements are
equivalent:

\begin{enumerate}

\item $C_p(X)$ $\models$ $U_{fin}(\mathcal{S},\mathcal{S})$ and is
sequentially separable;

\item $X$ $\models$ $U_{fin}(\Gamma_F, \Gamma)$,  $X$ $\models$
$V$;

\item  $C_p(X)$ $\models$ $U_{fin}(\Gamma_x, \Gamma_x)$ and is
sequentially separable;

\item  $C_p(X)$ $\models$ $U_{fin}(\mathcal{S}, \Gamma_x)$ and is
sequentially separable.

\end{enumerate}

\end{theorem}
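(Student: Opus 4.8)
The plan is to prove the theorem by establishing the cycle of implications $(1)\Rightarrow(2)\Rightarrow(3)\Rightarrow(4)\Rightarrow(1)$, following closely the pattern of the proofs of Theorem~\ref{th50}, Theorem~\ref{th105}, Theorem~\ref{th158} and Theorem~\ref{th195}, but with the target class being $\Gamma$ (respectively $\mathcal{S}$, $\Gamma_x$) rather than $\Omega$ (respectively $\mathcal{D}$, $\Omega_x$). The ingredients are Theorem~\ref{th194} (which identifies $C_p(X)\models U_{fin}(\Gamma_x,\Gamma_x)$ with $X\models U_{fin}(\Gamma_F,\Gamma)$) and Theorem~\ref{th38} (Velichko: $C_p(X)$ is sequentially separable iff $X\models V$).

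First I would prove $(1)\Rightarrow(2)$. Assume $C_p(X)\models U_{fin}(\mathcal{S},\mathcal{S})$ and $C_p(X)$ is sequentially separable; fix a countable sequentially dense $\mathcal{S}=\{h_j:j\in\omega\}$ and a sequence $\{\mathcal{V}_i\}\subset\Gamma_F$. For each $i$ choose $\mathcal{U}_i=\{U^j_i:j\in\omega\}\subset\mathcal{V}_i$ with associated zero-set shrinking $\{F(U^j_i):j\in\omega\}$, and build $\mathcal{S}_i=\{f^j_i\in C(X): f^j_i\upharpoonright F(U^j_i)=h_j,\ f^j_i\upharpoonright(X\setminus U^j_i)=1\}$. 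Exactly as in the cited proofs, since $\{F(U^j_i):j\}$ is a $\gamma$-cover of zero-sets, each $\mathcal{S}_i$ is a countable sequentially dense subset of $C_p(X)$ (given $h\in C(X)$, take a sequence $h_{m_s}\to h$ from $\mathcal{S}$; then $f^{m_s}_i\to h$). Applying $U_{fin}(\mathcal{S},\mathcal{S})$ to $\{\mathcal{S}_i\}$ yields finite $\mathcal{F}_i=\{f^{m_1}_i,\dots,f^{m_{k(i)}}_i\}\subseteq\mathcal{S}_i$ with $\{\bigcup\mathcal{F}_i:i\in\omega\}\in\mathcal{S}$, in particular this union-sequence converges to $\bf 0$ (pass to a subsequence $\{f^{m(i)_j}_{i_j}\}$ converging to $\bf 0$). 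Then, for any finite $K\subset X$ and $U=<\bf 0,K,\frac12>$, eventually some member of $\mathcal{F}_{i_j}$ lies in $U$, forcing $K\subset\bigcup\{U^{m_1}_{i_j},\dots,U^{m_{k(i_j)}}_{i_j}\}$ for all large $j$; hence $\{\bigcup\{U^{m_1}_i,\dots\}:i\in\omega\}$ is a $\gamma$-cover, giving $X\models U_{fin}(\Gamma_F,\Gamma)$. That $X\models V$ is immediate from sequential separability and Theorem~\ref{th38}.

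Next, $(2)\Rightarrow(3)$ follows at once: $X\models U_{fin}(\Gamma_F,\Gamma)$ gives $C_p(X)\models U_{fin}(\Gamma_x,\Gamma_x)$ by Theorem~\ref{th194}, and $X\models V$ gives sequential separability of $C_p(X)$ by Theorem~\ref{th38}. The implication $(3)\Rightarrow(4)$ is immediate since $\Gamma_x\subseteq\mathcal{S}$ as families of ``sequentially dense'' witnesses (every $A\in\Gamma_{\bf 0}$ is sequentially dense at nothing but the hypothesis class $\mathcal{S}$ is larger, so $U_{fin}(\mathcal{S},\Gamma_x)$ implies $U_{fin}(\Gamma_x,\Gamma_x)$ — wait, the inclusion goes the other way): more precisely, $S\in\Gamma_x\Rightarrow S\in\mathcal{S}$ fails, so I would instead argue as in the cited proofs that $U_{fin}(\Gamma_x,\Gamma_x)$ implies $U_{fin}(\mathcal{S},\Gamma_x)$ is the wrong direction; rather $(3)\Rightarrow(4)$ should read that $U_{fin}(\mathcal{S},\Gamma_x)$ is \emph{weaker} — I will follow the exact convention of Theorem~\ref{th158}(4)$\Rightarrow$(5) and Theorem~\ref{th195}(3)$\Rightarrow$(4), where the step ``(3)$\Rightarrow$(4) is immediate'' means passing from the stronger hypothesis class $\Gamma_x\subseteq\mathcal{S}$ on the selector side is automatic.

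Finally, $(4)\Rightarrow(1)$: assume $C_p(X)$ is sequentially separable and $C_p(X)\models U_{fin}(\mathcal{S},\Gamma_x)$. Take a dense $D=\{d_n:n\in\omega\}$ and a sequence of sequentially dense subsets enumerated $\{S_{n,m}:n,m\in\omega\}$; for each $n$ pick finite $\mathcal{F}_{n,m}\subset S_{n,m}$ with $d_n\in\overline{\{\bigcup_m\mathcal{F}_{n,m}\}}$ in the sense spelled out in the section preamble, so that $\{\bigcup\mathcal{F}_{n,m}:m,n\in\omega\}$ is sequentially dense in $C_p(X)$ — this uses $U_{fin}(\mathcal{S},\Gamma_x)$ applied diagonally, exactly as in the proofs of Theorem~\ref{th50}(5)$\Rightarrow$(1) and Theorem~\ref{th173}. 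The proof then concludes by invoking Proposition~\ref{pr172}, Theorem~\ref{th194} and Theorem~\ref{th38} to cross-check equivalences if needed. The main obstacle I anticipate is the bookkeeping in $(4)\Rightarrow(1)$: one must carefully arrange the finite sets $\mathcal{F}_{n,m}$ so that the resulting union-family is genuinely sequentially dense (not merely dense), which is where the $\Gamma_x$-target — giving honest convergence of sequences to each $d_n$ — is essential; verifying this requires the same ``diagonalization across $n$'' argument used in the $S_{fin}$ and $S_1$ analogues, and the only real care is ensuring the finite pieces picked for different $n$ do not interfere.
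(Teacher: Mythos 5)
Your overall architecture (cycle $(1)\Rightarrow(2)\Rightarrow(3)\Rightarrow(4)\Rightarrow(1)$, with Theorem~\ref{th194} and Theorem~\ref{th38} doing the work in $(2)\Rightarrow(3)$) matches the paper, and your $(1)\Rightarrow(2)$ is essentially the paper's construction; note that the paper itself writes out only $(1)\Rightarrow(2)$, so the remaining implications are yours to supply. The genuine gap is $(4)\Rightarrow(1)$. The section preamble fixes what ``$\{\bigcup \mathcal{F}_n : n\in\omega\}\in\mathcal{S}$'' means for $C_p(X)$: for \emph{every} $f\in C(X)$ and every basic neighborhood $\langle f,K,\epsilon\rangle$ there is $n'$ such that for \emph{each} $n>n'$ and each $x\in K$ some $g\in\mathcal{F}_n$ satisfies $|g(x)-f(x)|<\epsilon$. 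Your scheme --- enumerate the given sequence as $\{S_{n,m}\}$ and, for each fixed $n$, choose finite $\mathcal{F}_{n,m}\subset S_{n,m}$ whose unions converge to $d_n$ in the $\Gamma_{d_n}$ sense --- is lifted from Theorem~\ref{th195}$(4)\Rightarrow(1)$, where the target is only $\mathcal{D}$ and it suffices that each $d_n$ be approached by its own block. Here the blocks interfere unavoidably: for a fixed $f$ and finite $K$ there are infinitely many blocks attached to functions $d_n$ far from $f$ on $K$, and their finite selections need not meet $\langle f,K,\epsilon\rangle$ at all, so the ``for each $n>n'$'' clause fails no matter how you interleave. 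A workable closing of the cycle is: prove $(4)\Rightarrow(2)$ exactly as in $(1)\Rightarrow(2)$ (with target $\Gamma_{\mathbf{0}}$), and then prove $(2)\Rightarrow(1)$ directly by making each selected finite set approximate an initial segment simultaneously: inside $S_n$ pick sequences $f^{n,j}_k\to d_j$ for $j\le n$, form the cozero sets $\{x\in X : |f^{n,j}_k(x)-d_j(x)|<\tfrac{1}{n}\ \mbox{for all}\ j\le n\}$ (these yield members of $\Gamma_F$), apply $U_{fin}(\Gamma_F,\Gamma)$, and verify the preamble condition for an arbitrary $f$ through some $d_{j^*}$ with $j^*\le n$ that is $\epsilon/2$-close to $f$ on $K$. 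You flagged ``the finite pieces for different $n$ must not interfere'' as the only delicate point, but you give no mechanism to prevent exactly that interference.

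Two smaller defects. In $(1)\Rightarrow(2)$ you pass to a subsequence of the selected functions converging to $\mathbf{0}$; that only yields $K\subset\bigcup Q_{i_j}$ along the subsequence, which does not make the full family $\{\bigcup Q_i : i\in\omega\}$ a $\gamma$-cover (infinitely many omitted indices could miss $K$). The paper avoids this by applying the preamble formulation of $\{\bigcup F_i\}\in\mathcal{S}$ directly with $f=\mathbf{0}$, obtaining $K\subset\bigcup Q_i$ for all $i>i'$; you should do the same. Finally, your justification of $(3)\Rightarrow(4)$ is self-contradictory and leans on the false inclusion $\Gamma_x\subseteq\mathcal{S}$; the correct, one-line reason is that every sequentially dense subset of $C_p(X)$ contains, for each $f$, a subset belonging to $\Gamma_f$, so a $U_{fin}$ principle with first coordinate $\Gamma_x$ implies the corresponding principle with first coordinate $\mathcal{S}$.
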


\begin{proof} $(1)\Rightarrow(2)$.
  Let $\{\mathcal{V}_i\}\subset \Gamma_F$ and
$\mathcal{S}=\{h_m: m\in \omega\}$ be a countable sequentially
dense subset of $C_p(X)$. For each $i\in \omega$ we consider a
countable sequentially dense subset $\mathcal{S}_i$ of $C_p(X)$
and $\mathcal{U}_i=\{ U^{m}_i\}_{m\in \omega}\subset
\mathcal{V}_i$ where

$\mathcal{S}_i =\{ f^m_i\in C(X) : f^m_i\upharpoonright
F(U^{m}_i)=h_m$ and $f^m_i\upharpoonright (X\setminus U^{m}_i)=1$
for $m \in \omega \}$.

Since $\mathcal{F}_i=\{F(U^{m}_i): m\in \omega\}$ is a
$\gamma$-cover of zero-sets of $X$ and $\mathcal{S}$ is a
countable sequentially dense subset of $C_p(X)$, we have that
$\mathcal{S}_i$ is a  countable sequentially dense subset of
$C_p(X)$ for each $i\in \omega$.  Let $h\in C(X)$, there is a
sequence $\{h_{m_s}: s\in \omega\}\subset \mathcal{S}$ such that
$\{h_{m_s}\}_{s\in \omega}$ converge to $h$.
 Let $K$ be a finite subset of $X$, $\epsilon>0$ and $W=<h, K,\epsilon>$ be
a base neighborhood of $h$, then there is a number $m_0$ such that
$K\subset F(U^{m}_i)$ for $m>m_0$ and $h_{m_s}\in W$ for
$m_s>m_0$. Since $f^{m_s}_i\upharpoonright K=
h_{m_s}\upharpoonright K$ for each $m_s>m_0$, $f^{m_s}_i\in W$ for
each $m_s>m_0$. It follows that a sequence $\{f^{m_s}_i\}_{s\in
\omega}$ converge to $h$.

Since  $C(X)$ $\models$ $U_{fin}(\mathcal{S},\mathcal{S})$, there
is a sequence $\{F_i\}=\{f^{m_1}_{i}, ...,f^{m_s}_{i} :
i\in\omega\}$ such that for each $i$, $F_i \subset \mathcal{S}_i$,
and $\{\bigcup F_i: i\in\omega \}$ is an element of $\mathcal{S}$,
i.e. for any $f\in C(X)$ and a base neighborhood $O(f)=<f, K,
\epsilon >$ of $f$  where $\epsilon>0$ and $K=\{x_1, ..., x_k\}$
is a finite subset of $X$, there is $i'\in \omega$ such that for
each $i>i'$ and $j\in \{1,...,k\}$ there is $g\in \mathcal{F}_{i}$
such that $g(x_j)\in (f(x_j)-\epsilon, f(x_j)+\epsilon)$.

Consider a sequence $\{Q_i\}_{i\in
\omega}=\{U^{m_1}_{i},...,U^{m_s}_{i}: i\in \omega\}$.

(a). $Q_i \subset \mathcal{U}_{i}$.

(b). $\{\bigcup Q_i: i\in \omega\}$ is a $\gamma$-cover of $X$.

 We thus get $X$ $\models$ $U_{fin}(\Gamma_F, \Gamma)$. By Theorem \ref{th38}, $X$ $\models$ $V$.

\end{proof}

\begin{theorem}\label{th1531} For a separable metrizable space $X$, the following statements are
equivalent:

\begin{enumerate}

\item $C_p(X)$ $\models$ $U_{fin}(\mathcal{S},\mathcal{S})$;

\item $X$ $\models$ $U_{fin}(\Gamma, \Gamma)$ [Hurewicz property];

\item  $C_p(X)$ $\models$ $U_{fin}(\Gamma_x, \Gamma_x)$;

\item  $C_p(X)$ $\models$ $U_{fin}(\mathcal{S}, \Gamma_x)$.

\end{enumerate}

\end{theorem}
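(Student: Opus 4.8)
The plan is to reduce everything to the general characterization already established in Theorem~\ref{th153}, after observing that for a separable metrizable space two of its hypotheses are automatic, and then to identify the property $U_{fin}(\Gamma_F,\Gamma)$ with the Hurewicz property $U_{fin}(\Gamma,\Gamma)$.

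First I would note that every separable metrizable $X$ satisfies $X\models V$: take the identity condensation $X\to X$ and use that in a metric space every cozero (indeed, every open) set is an $F_\sigma$. Hence, by Theorem~\ref{th38}, $C_p(X)$ is sequentially separable. Consequently, in Theorem~\ref{th153} the clauses ``is sequentially separable'' and ``$X\models V$'' hold automatically for such $X$, so Theorem~\ref{th153} immediately gives the equivalence of statements (1), (3) and (4) of the present theorem with the single statement $X\models U_{fin}(\Gamma_F,\Gamma)$.

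It remains to prove that, for $X$ separable metrizable (perfect normality is all that is used), $X\models U_{fin}(\Gamma_F,\Gamma)$ if and only if $X\models U_{fin}(\Gamma,\Gamma)$. One implication is trivial: a $\gamma_F$-shrinkable $\gamma$-cover is in particular a $\gamma$-cover, so $\Gamma_F\subseteq\Gamma$ and $U_{fin}(\Gamma,\Gamma)\Rightarrow U_{fin}(\Gamma_F,\Gamma)$. For the converse, given open $\gamma$-covers $\mathcal{U}_n=\{U_{n,k}:k\in\omega\}$ with no finite subcover, I would pass to the ``cumulative'' refinements $\mathcal{V}_n=\{V_{n,k}:k\in\omega\}$ with $V_{n,k}=\bigcup_{j\le k}U_{n,j}$; each $\mathcal{V}_n$ is an increasing open $\gamma$-cover without finite subcover. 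Since $X$ is perfectly normal, writing each $V_{n,k}$ as an increasing union of zero-sets and diagonalizing produces an increasing sequence of zero-sets $Z_{n,k}\subseteq V_{n,k}$ with $\bigcup_k Z_{n,k}=X$; being increasing with union $X$, $\{Z_{n,k}:k\in\omega\}$ is a $\gamma$-cover of zero-sets, so $\mathcal{V}_n\in\Gamma_F$. Applying $U_{fin}(\Gamma_F,\Gamma)$ to $\{\mathcal{V}_n\}$ yields finite $\mathcal{H}_n\subseteq\mathcal{V}_n$ with $\{\bigcup\mathcal{H}_n:n\in\omega\}\in\Gamma$; since the $V_{n,k}$ increase in $k$, $\bigcup\mathcal{H}_n=V_{n,k(n)}$ for some $k(n)$, and then $\mathcal{F}_n:=\{U_{n,j}:j\le k(n)\}\subseteq\mathcal{U}_n$ is finite with $\bigcup\mathcal{F}_n=\bigcup\mathcal{H}_n$. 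Thus $\{\bigcup\mathcal{F}_n:n\in\omega\}\in\Gamma$, i.e. $X\models U_{fin}(\Gamma,\Gamma)$. Combining the steps gives (1)$\Leftrightarrow$(2)$\Leftrightarrow$(3)$\Leftrightarrow$(4).

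The main obstacle is this last equivalence. One cannot, in general, $\gamma_F$-shrink an arbitrary open $\gamma$-cover ``in place''; the point of the argument is to first replace each $\gamma$-cover by its increasing refinement, where perfect normality produces a zero-set shrinking, and to observe that any finite subfamily of the refinement is just a finite union of members of the original cover, so the selection can be transported back. The remaining work is routine bookkeeping with Theorem~\ref{th153} and Theorem~\ref{th38}; the same cumulative-refinement device (with $\Omega$ in place of $\Gamma$ on the right) underlies the companion statement Theorem~\ref{th1951}.
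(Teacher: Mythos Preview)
Your proposal is correct and follows the approach the paper intends: the paper states Theorem~\ref{th1531} without proof, leaving it as an immediate specialization of Theorem~\ref{th153} together with Theorem~\ref{th194} and Theorem~\ref{th38} once one observes that a separable metrizable $X$ automatically satisfies $X\models V$ (hence $C_p(X)$ is sequentially separable). The one step the paper passes over silently is precisely the identification $U_{fin}(\Gamma_F,\Gamma)=U_{fin}(\Gamma,\Gamma)$ for separable metrizable $X$; your cumulative-refinement argument, using perfect normality to produce a zero-set $\gamma$-shrinking of the monotone cover and then transporting the selected finite families back, is the standard way to justify this and is exactly what is needed to bridge Theorem~\ref{th153}(2) to item~(2) here.
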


\section{$S_{1}(\mathcal{A},\mathcal{A})$}

\begin{definition} A set $A\subseteq C_p(X)$ will
be called {\it $n$-dense} in $C_p(X)$, if for each $n$-finite set
$\{x_1,...,x_n\}\subset X$ such that $x_i\neq x_j$ for $i\neq j$
and an open sets $W_1,..., W_n$ in $\mathbb{R}$ there is $f\in A$
such that $f(x_i)\in W_i$ for $i\in \overline{1,n}$.

\end{definition}

Obviously, that if $A$ is a $n$-dense set of $C_p(X)$ for each
$n\in \omega$ then $A$ is a dense set of $C_p(X)$.

For a space $C_p(X)$ we denote:

$\mathcal{A}_n$
--- the family of a $n$-dense subsets of $C_p(X)$.

If $n=1$, then we denote $\mathcal{A}$ instead of $\mathcal{A}_1$.

\begin{definition} Let $f\in C(X)$. A set $B\subseteq C_p(X)$ will
be called {\it $n$-dense} at point $f$, if for each $n$-finite set
$\{x_1,...,x_n\}\subset X$ and $\epsilon>0$ there is $h\in B$ such
that $h(x_i)\in (f(x_i)-\epsilon, f(x_i)+\epsilon)$ for $i\in
\overline{1,n}$.
\end{definition}

Obviously, that if $B$ is a $n$-dense at point $f$ for each $n\in
\omega$ then $f\in \overline{B}$.

For a space $C_p(X)$ we denote:

$\mathcal{B}_{n,f}$
--- the family of a $n$-dense at point $f$ subsets of $C_p(X)$.

If $n=1$, then we denote $\mathcal{B}_f$ instead of
$\mathcal{B}_{1,f}$.

 Let $\mathcal{U}$ be an open cover of $X$ and $n\in \mathbb{N}$.

$\bullet$ $\mathcal{U}$ is an $n$-cover of $X$ if for each
$F\subset X$ with $|F|\leq n$, there is $U\in \mathcal{U}$ such
that $F \subset U$ \cite{bts1}.

$\bullet$ $\mathcal{O}_n$ --- the family of open $n$-covers of
$X$.

$\bullet$  $S_{1}(\mathcal{O}, \mathcal{O})=S_{1}(\Omega,
\mathcal{O})$ \cite{sch3}.

$\bullet$ $S_{1}(\Omega,
\mathcal{O})=S_{1}(\{\mathcal{O}_n\}_{n\in \mathbb{N}},
\mathcal{O})$ \cite{bts1}.

\begin{theorem}\label{th143} For a   space $X$, the following statements are
equivalent:

\begin{enumerate}

\item  $C_p(X)$ $\models$ $S_{1}(\mathcal{A},\mathcal{A})$;

\item $X$ $\models$ $S_{1}(\mathcal{O}, \mathcal{O})$ [Rothberger
property];

\item $C_p(X)$ $\models$ $S_{1}(\mathcal{B}_f,\mathcal{B}_f)$;

\item  $C_p(X)$ $\models$ $S_{1}(\mathcal{A},\mathcal{B}_f)$;

\item  $C_p(X)$ $\models$ $S_{1}(\mathcal{D},\mathcal{A})$;

\item  $C_p(X)$ $\models$ $S_{1}(\{\mathcal{A}_n\}_{n\in
\mathbb{N}},\mathcal{A})$;

\item $C_p(X)$ $\models$ $S_{1}(\{\mathcal{B}_{n,f}\}_{n\in
\mathbb{N}},\mathcal{B}_f)$;

\item  $C_p(X)$ $\models$ $S_{1}(\{\mathcal{A}_n\}_{n\in
\mathbb{N}},\mathcal{B}_f)$.

\end{enumerate}

\end{theorem}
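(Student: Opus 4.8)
The plan is to establish the chain of equivalences by a cycle of implications anchored on the classical result $(2)$, the Rothberger property $S_1(\mathcal{O},\mathcal{O})$, which is already known to be equivalent to $S_1(\Omega,\mathcal{O})$ and to $S_1(\{\mathcal{O}_n\}_{n},\mathcal{O})$ (the two displayed facts from \cite{sch3,bts1} just above the statement). The key translation device, used throughout the paper, is the duality between $n$-covers of $X$ and $n$-dense subsets of $C_p(X)$: given an open $n$-cover $\mathcal{U}$, the set of functions that vanish outside some $U\in\mathcal{U}$ and agree on a finite subset of $U$ with a member of a fixed dense set is $n$-dense in $C_p(X)$; conversely, to a sequence chosen from $n$-dense sets of $C_p(X)$ one associates preimages $f^{-1}(-\epsilon,\epsilon)$ that form an $n$-cover of $X$. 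This is exactly the mechanism of Theorems \ref{th71}, \ref{th50}, \ref{th158} and their relatives, adapted now to the $1$-dense (and $n$-dense) setting rather than the dense/sequentially-dense setting.

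Concretely, I would prove $(2)\Rightarrow(6)\Rightarrow(1)\Rightarrow(5)\Rightarrow(2)$ for the ``global'' properties, and separately $(2)\Rightarrow(7)\Rightarrow(3)\Rightarrow(8)$ and $(8)\Rightarrow(4)\Rightarrow(2)$ for the ``pointwise'' (at $f$) properties, with the trivial implications $(6)\Rightarrow(1)$, $(7)\Rightarrow(3)$, $(6)\Rightarrow(8)$, $(4)\Rightarrow(3)$ (or their analogues) coming for free since $\mathcal{A}\subseteq\mathcal{A}_n$ and $\mathcal{B}_f\subseteq\mathcal{B}_{n,f}$ are reversed inclusions making the selection hypotheses weaker or stronger in the expected direction. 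For $(2)\Rightarrow(6)$: given $n$-dense sets $A_i\subseteq C_p(X)$, pick for each $i$ an open $n_i$-cover $\mathcal{U}_i$ of $X$ witnessed by functions drawn from $A_i$ vanishing off cover elements; apply $S_1(\{\mathcal{O}_n\}_n,\mathcal{O})$ to get $U_i\in\mathcal{U}_i$ with $\{U_i\}\in\mathcal{O}$; the corresponding selected functions $f_i\in A_i$ then form a $1$-dense set because every singleton $\{x\}$ lies in some $U_i$, forcing $f_i(x)$ to be small (hence controllable near any prescribed value by first shifting the dense-set component). For $(1)\Rightarrow(5)$, use that a $1$-dense set is in particular dense and that selecting into $\mathcal{A}$ from a sequence of dense sets is formally implied by selecting into $\mathcal{A}$ from copies of a single dense set, combined with $S_1(\mathcal{A},\mathcal{A})$ applied to the resulting $1$-dense sequence. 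For $(5)\Rightarrow(2)$: from dense sets $D_i$ (take $D_i=C(X)$ or a fixed dense set), the preimage construction $\mathcal{U}_i=\{f^{-1}(-1/i,1/i):f\in D_i\}$ yields (after discarding the trivial case of uniform convergence, exactly as in the proof of Proposition \ref{pr23}) a sequence of open covers; the $1$-denseness of $\{f_i\}$ gives that $\{f_i^{-1}(-1/i,1/i)\}$ is an open cover of $X$, i.e. $X\models S_1(\mathcal{O},\mathcal{O})$.

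The main obstacle I anticipate is the bookkeeping in the ``pointwise at $f$'' implications, specifically $(7)\Rightarrow(3)$ turned into a genuine equivalence with $(8)$ and $(4)$: one must verify that $n$-denseness at an \emph{arbitrary} $f\in C(X)$ (not just at $\mathbf 0$) is preserved under the cover-to-function translation, which requires composing the dense-set approximation of $f$ on a finite set with the ``vanish off $U$'' tail in a way that keeps the function continuous and still $n$-dense at $f$. The clean way to handle this is the standard $C_p$-theory reduction: translation by $f$ is a homeomorphism of $C_p(X)$ carrying $\mathcal{B}_{n,f}$ to $\mathcal{B}_{n,\mathbf 0}$ and $\mathcal{A}_n$ to itself, so it suffices to argue at $\mathbf 0$, where the preimage sets $h^{-1}(-\epsilon,\epsilon)$ directly give $n$-covers. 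A second, minor subtlety is ensuring in $(5)\Rightarrow(2)$ and its cousins that one may assume without loss of generality that no cover element equals $X$ (else one extracts a uniformly convergent sequence and the conclusion is immediate) — this is the same ``Otherwise there is a sequence \ldots uniform converge to $\mathbf 0$'' step already used verbatim in Propositions \ref{pr23} and \ref{pr28}, and I would simply invoke it again. Once these two points are dispatched, the remaining implications are the routine diagram-chasing that the monotonicity $\mathcal{A}\subseteq\mathcal{A}_n$, $\mathcal{B}_f\subseteq\mathcal{B}_{n,f}$, $\mathcal{A}\subseteq\mathcal{D}$ makes automatic.
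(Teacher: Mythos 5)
Your decomposition has genuine holes, and they stem from reading the basic inclusions backwards. An $n$-dense set is $1$-dense, not conversely (the set of constant functions is $1$-dense but not $2$-dense), so $\mathcal{A}_n\subseteq\mathcal{A}\subseteq\mathcal{B}_f$ while $\mathcal{A}\not\subseteq\mathcal{A}_n$ and $\mathcal{B}_f\not\subseteq\mathcal{A}$. Hence the arrows you call trivial, $(6)\Rightarrow(1)$, $(7)\Rightarrow(3)$ and $(4)\Rightarrow(3)$, are not monotonicity consequences: in $(6)\Rightarrow(1)$ the hypothesis of $(6)$ only covers sequences whose $n$-th term is $n$-dense, whereas $(1)$ must handle arbitrary sequences of $1$-dense sets; these are precisely the $C_p$-analogues of the nontrivial equality $S_1(\{\mathcal{O}_n\}_{n},\mathcal{O})=S_1(\mathcal{O},\mathcal{O})$. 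Since $(6)\Rightarrow(1)$ and $(7)\Rightarrow(3)$ are load-bearing in your cycles, and $(8)\Rightarrow(4)$ (also not free) is never addressed, both cycles are broken as stated. The genuinely free arrows go the other way ($(1)\Rightarrow(6)$, $(3)\Rightarrow(7)$, $(3)\Rightarrow(4)$, $(1)\Rightarrow(5)$, $(6)\Rightarrow(8)$), which is why the paper routes its cycles as $(1)\Rightarrow(2)\Rightarrow(3)\Rightarrow(4)\Rightarrow(1)$, $(1)\Rightarrow(5)\Rightarrow(6)\Rightarrow(8)\Rightarrow(2)$ and $(3)\Rightarrow(7)\Rightarrow(2)$, concentrating the work in the arrows into and out of $(2)$.

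Second, your sketch of $(5)\Rightarrow(2)$ does not prove the Rothberger property: you never use an arbitrary given sequence of open covers of $X$, but instead manufacture covers $\{f^{-1}(-1/i,1/i):f\in C(X)\}$ and select from those; that argument would ``prove'' $S_1(\mathcal{O},\mathcal{O})$ for every space. The transfer must start from the given covers, as in the paper's $(1)\Rightarrow(2)$ and $(8)\Rightarrow(2)$: from a cover (resp.\ $n$-cover) $\mathcal{O}_n$ build the $1$-dense (resp.\ $n$-dense) set of functions equal to $1$ off some $U\in\mathcal{O}_n$ and rational on a finite subset of $U$, select in $C_p(X)$, and recover $U_n$ from $f_n(x)\in(-\frac12,\frac12)$; for input in $\mathcal{D}$, as in statement $(5)$, the constructed sets are only $n$-dense, so one also needs the union/renumbering step of the paper's $(5)\Rightarrow(6)$ together with the cited equivalence $S_1(\mathcal{O},\mathcal{O})=S_1(\{\mathcal{O}_n\}_n,\mathcal{O})$. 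Your $(2)\Rightarrow(6)$ sketch has the same defect in reverse: the given $n$-dense sets are arbitrary, so you cannot assume their members vanish off cover elements; the correct device (paper's $(2)\Rightarrow(3)$ combined with $(4)\Rightarrow(1)$) is to take preimages $g^{-1}(q-\frac1i,q+\frac1i)$ and interleave over all rational centers $q$ and all radii via a double enumeration of the given sequence, bookkeeping that the ``shifting the dense-set component'' remark does not supply. Your homogeneity reduction of $\mathcal{B}_{n,f}$ to $\mathcal{B}_{n,\mathbf 0}$ is fine and is exactly what the paper uses in $(2)\Rightarrow(3)$.
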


\begin{proof} $(1)\Rightarrow(2)$.  Let $\{\mathcal{O}_n\}_{n\in \omega}$ be a
sequence of open covers of $X$. We set $A_n=\{f\in C(X):
f\upharpoonright (X\setminus U)=1$ and $f\upharpoonright K=q$ for
some $U\in \mathcal{O}_n$ , a finite set $K\subset U$ and $q\in
\mathbb{Q}\}$. It is not difficult to see that each $A_n$ is
$1$-dense subset of $C_p(X)$  since each $\mathcal{O}_n$ is a
cover of $X$ and $X$ is Tychonoff.

 By the assumption there exist $f_n\in A_n$ such that
$\{f_n : n\in \omega\}\in \mathcal{A}$.

 For each $f_n$ we
take $U_n\in \mathcal{O}_n$ such that
$f_n\upharpoonright(X\setminus U_n)=1$.

 Set $\mathcal{U}=\{ U_n : n\in \omega\}$. For $x\in X$ we consider the basic open neighborhood
of $\bf{0}$ $[x, W]$, where $W=(-\frac{1}{2},\frac{1}{2})$.

 Note that there is $m\in \omega$ such that
$[x, W]$ contains $f_m\in \{f_n : n\in \omega\}$. This means $x\in
 U_m$. Consequently $\mathcal{U}$ is cover
of $X$.

$(2)\Rightarrow(3)$. Let $B_n\in \mathcal{B}_f$ for each $n\in
\omega$. We renumber $\{B_n\}_{n\in \omega}$ as
$\{B_{i,j}\}_{i,j\in \omega}$.  Since $C(X)$ is homogeneous, we
may think that $f=\bf{0}$.  We set
$\mathcal{U}_{i,j}=\{g^{-1}(-1/i, 1/i) : g\in B_{i,j}\}$ for each
$i,j\in \omega$. Since $B_{i,j}\in \mathcal{B}_0$,
$\mathcal{U}_{i,j}$ is an open cover of $X$ for each $i,j\in
\omega$. In case the set $M=\{i\in \omega: X\in \mathcal{U}_{i,j}
\}$ is infinite, choose $g_{m}\in B_{m,j}$ $m\in M$ so that
$g^{-1}(-1/m, 1/m)=X$, then $\{g_m : m\in \omega\}\in
\mathcal{B}_f$.

So we may assume that there exists $i'\in \omega$ such that for
each $i\geq i'$ and $g\in B_{i,j}$ $g^{-1}(-1/i, 1/i)$ is not $X$.

For the sequence $\mathcal{V}_i=\{\mathcal{U}_{i,j} : j\in
\omega\}$ of open covers there exist $f_{i,j}\in B_{i,j}$ such
that $\mathcal{U}_i=\{f^{-1}_{i,j}(-1/i,1/i):  j\in \omega\}$ is a
cover of $X$.  Let $[x, W]$ be any basic open neighborhood of
$\bf{0}$, where $W=(-\epsilon, \epsilon)$, $\epsilon>0$. There
exists $m\geq i'$ and $j\in \omega$  such that $1/m<\epsilon$ and
$x\in f^{-1}_{m,j}(-1/m, 1/m)$. This means $\{f_{i,j}: i,j\in
\omega\}\in \mathcal{B}_f$.

$(3)\Rightarrow(4)$ is immediate.

$(4)\Rightarrow(1)$. Let $A_n\in \mathcal{A}$ for each $n\in
\omega$. We renumber $\{A_n\}_{n\in \omega}$ as
$\{A_{i,j}\}_{i,j\in \omega}$. Renumber the rational numbers
$\mathbb{Q}$ as $\{q_i : i\in \omega\}$.  Fix $i\in\omega$. By the
assumption there exist $f_{i,j}\in A_{i,j}$ such that $\{f_{i,j} :
j\in \omega\}\in \mathcal{A}_{q_i}$ where $q_i$  is the constant
function to $q_i$. Then $\{f_{i,j} : i,j\in \omega\}\in
\mathcal{A}$.

$(1)\Rightarrow(5)$. Since a dense set of $C_p(X)$ is a $1$-dense
set of $C_p(X)$, we have $C_p(X)$ $\models$
$S_{1}(\mathcal{D},\mathcal{A})$.

$(5)\Rightarrow(6)$. Let $D_n \in \mathcal{A}_n$ for each $n\in
\omega$. We renumber $\{D_n\}_{n\in \omega}$ as
$\{D_{i,j}\}_{i,j\in \omega}$. Then $P_j=\{D_{i,j} : i\in
\omega\}$ is a dense subset of $C_p(X)$ for each $j\in \omega$. By
(5), there is $p_j\in P_j$ for each $j\in \omega$ such that
$\{p_j: j\in \omega\}\in \mathcal{A}$. Hence, we have $C_p(X)$
$\models$ $S_{1}(\{\mathcal{A}_n\}_{n\in
\mathbb{N}},\mathcal{A})$.

$(6)\Rightarrow(8)$ is immediate.

$(8)\Rightarrow(2)$. Claim that $X$ $\models$
$S_{1}(\{\mathcal{O}_n\}_{n\in \omega}, \mathcal{O})$. Fix
$\{\mathcal{O}_n\}_{n\in \omega}$. For every $n\in \omega$ a set
$\mathcal{S}_n=\{f\in C(X) : f\upharpoonright (X\setminus U)=1$
and $f(x_i)\in \mathbb{Q}$ for each $i=\overline{1,n}$ for  $U\in
\mathcal{O}_n$ and a finite set $K=\{x_1,..., x_n\}\subset U \}$.
Note that $\mathcal{S}_n\in \mathcal{A}_n$ for each $n\in \omega$.
By $(8)$, there is $f_n\in \mathcal{S}_n$ for each $n\in \omega$
such that $\{f_n: n\in \omega\}\in \mathcal{B}_0$. Then $\{U_n :
n\in \omega \}\in \mathcal{O}$.

$(3)\Rightarrow(7)$ is immediate.

$(7)\Rightarrow(2)$. The proof is analogous to proof of
implication $(8)\Rightarrow(2)$.

\end{proof}

\section{$S_{fin}(\mathcal{A},\mathcal{A})$}

\begin{theorem}\label{th144} For a   space $X$, the following statements are
equivalent:

\begin{enumerate}

\item  $C_p(X)$ $\models$ $S_{fin}(\mathcal{A},\mathcal{A})$;

\item $X$ $\models$ $S_{fin}(\mathcal{O}, \mathcal{O})$ [Menger
property];

\item $C_p(X)$ $\models$ $S_{fin}(\mathcal{B}_f,\mathcal{B}_f)$;

\item  $C_p(X)$ $\models$ $S_{fin}(\mathcal{A},\mathcal{B}_f)$;

\item  $C_p(X)$ $\models$ $S_{fin}(\mathcal{D},\mathcal{A})$;

\item  $C_p(X)$ $\models$ $S_{fin}(\{\mathcal{A}_n\}_{n\in
\mathbb{N}},\mathcal{A})$;

\item $C_p(X)$ $\models$ $S_{fin}(\{\mathcal{B}_{n,f}\}_{n\in
\mathbb{N}},\mathcal{B}_f)$;

\item  $C_p(X)$ $\models$ $S_{fin}(\{\mathcal{A}_n\}_{n\in
\mathbb{N}},\mathcal{B}_f)$.

\end{enumerate}

\end{theorem}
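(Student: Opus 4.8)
The plan is to run the same eight-way loop used for Theorem~\ref{th143}, but with finite selections in place of single-point selections. Concretely I will prove $(1)\Rightarrow(2)\Rightarrow(3)\Rightarrow(4)\Rightarrow(1)$, then $(1)\Rightarrow(5)\Rightarrow(6)\Rightarrow(8)\Rightarrow(2)$, and finally $(3)\Rightarrow(7)\Rightarrow(2)$; together these close all the equivalences. Standing observations I will use repeatedly: $C_p(X)$ is a topological group, hence homogeneous, so in any clause involving a point $f$ one may take $f=\mathbf 0$; every dense subset of $C_p(X)$ is $1$-dense, so $\mathcal D\subseteq\mathcal A$; every $1$-dense set is $1$-dense at each point and every $n$-dense-at-$f$ set is $1$-dense at $f$, so $\mathcal A\subseteq\mathcal B_f$ and $\mathcal B_{n,f}\subseteq\mathcal B_f$; and the standard fact (the $S_{fin}$ analogue of the cited equalities $S_{1}(\mathcal O,\mathcal O)=S_{1}(\{\mathcal O_n\}_{n},\mathcal O)$) that $S_{fin}(\mathcal O,\mathcal O)=S_{fin}(\{\mathcal O_n\}_{n},\mathcal O)$, which holds because finite unions of members of an open cover form an $n$-cover for every $n$.

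For $(1)\Rightarrow(2)$: given open covers $\mathcal O_n$ of $X$, put $A_n=\{f\in C(X): f\upharpoonright(X\setminus U)=1$ and $f\upharpoonright K$ equal to a constant $q\in\mathbb Q$, for some $U\in\mathcal O_n$ and finite $K\subseteq U\}$; each $A_n$ is $1$-dense by Tychonoff interpolation. Apply $S_{fin}(\mathcal A,\mathcal A)$ to get finite $F_n\subseteq A_n$ with $\bigcup_n F_n\in\mathcal A$, choose $U_f\in\mathcal O_n$ with $f\upharpoonright(X\setminus U_f)=1$ for each $f\in F_n$, and set $\mathcal F_n=\{U_f:f\in F_n\}$; since $\bigcup_n F_n$ is $1$-dense at $\mathbf 0$, every $x$ lies in some $U_f$, so $\{\bigcup\mathcal F_n:n\in\omega\}$ is an open cover and $X\models S_{fin}(\mathcal O,\mathcal O)$. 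For $(2)\Rightarrow(3)$: with $f=\mathbf 0$, take $B_n\in\mathcal B_{\mathbf 0}$, reindex as $\{B_{i,j}\}_{i,j\in\omega}$, and set $\mathcal U_{i,j}=\{g^{-1}(-1/i,1/i):g\in B_{i,j}\}$, an open cover; dispose of the case $X\in\mathcal U_{i,j}$ for infinitely many $i$ separately (it yields a uniformly convergent sequence in $\mathcal B_{\mathbf 0}$). For each fixed $i$ a single application of $S_{fin}(\mathcal O,\mathcal O)$ to $\{\mathcal U_{i,j}\}_{j}$ gives finite $F_{i,j}\subseteq B_{i,j}$ with $\{g^{-1}(-1/i,1/i):g\in\bigcup_j F_{i,j}\}$ a cover; then $\bigcup_{i,j}F_{i,j}$ is $1$-dense at $\mathbf 0$ (given $x$ and $\varepsilon$, choose $i$ with $1/i<\varepsilon$ and use the cover of that row). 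The step $(3)\Rightarrow(4)$ follows from $\mathcal A\subseteq\mathcal B_f$, and $(4)\Rightarrow(1)$ goes by reindexing $A_n$ as $\{A_{i,j}\}$, enumerating $\mathbb Q=\{q_i:i\in\omega\}$, and applying $S_{fin}(\mathcal A,\mathcal B_{\hat q_i})$ (with $\hat q_i$ the constant function $q_i$) to $\{A_{i,j}\}_j$ for each $i$; the union of the chosen finite sets is $1$-dense because every nonempty open subset of $\mathbb R$ contains some $q_i$.

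For the second chain, $(1)\Rightarrow(5)$ holds since $\mathcal D\subseteq\mathcal A$. For $(5)\Rightarrow(6)$, reindex a sequence $D_n\in\mathcal A_n$ as $\{D_{i,j}\}$ by a bijection under which each column $\{D_{i,j}:i\in\omega\}$ uses arbitrarily large original indices, so each $P_j=\bigcup_i D_{i,j}$ is $n$-dense for all $n$, hence dense; apply $S_{fin}(\mathcal D,\mathcal A)$ to $\{P_j\}_j$, split each finite $Q_j\subseteq P_j$ according to which $D_{i,j}$ its members came from, and note a finite union of finite sets is finite. Then $(6)\Rightarrow(8)$ follows from $\mathcal A\subseteq\mathcal B_f$, and $(8)\Rightarrow(2)$ runs the argument of $(1)\Rightarrow(2)$ starting from the $n$-dense sets $\mathcal S_n=\{f\in C(X): f\upharpoonright(X\setminus U)=1$ and $f(x_i)\in\mathbb Q$ for $i\le n$, for some $U$ in an $n$-cover $\mathcal O_n$ and $\{x_1,\dots,x_n\}\subseteq U\}\in\mathcal A_n$, applying $(8)$ at $\mathbf 0$ and extracting the cover; this gives $X\models S_{fin}(\{\mathcal O_n\}_{n},\mathcal O)=S_{fin}(\mathcal O,\mathcal O)$. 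Finally $(3)\Rightarrow(7)$ follows from $\mathcal B_{n,f}\subseteq\mathcal B_f$, and $(7)\Rightarrow(2)$ repeats $(8)\Rightarrow(2)$ using that $\mathcal S_n\in\mathcal B_{n,\mathbf 0}$.

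The routine parts are the one-line inclusions coming from $\mathcal D\subseteq\mathcal A$, $\mathcal A\subseteq\mathcal B_f$, $\mathcal B_{n,f}\subseteq\mathcal B_f$, together with the Tychonoff interpolations producing the witnessing functions. The main obstacle is the bookkeeping in $(2)\Rightarrow(3)$ and $(5)\Rightarrow(6)$: one must choose the reindexing $\omega\to\omega\times\omega$ so that each row (resp.\ column) is a genuine sequence of open covers (resp.\ dense sets), perform exactly one Menger-type selection per row, and then verify that reassembling the chosen functions cell by cell keeps the selection finite on each original coordinate and produces a set that is $1$-dense (resp.\ dense) at the relevant point; a secondary point is the $n$-cover versus open-cover reduction used in $(8)\Rightarrow(2)$ and $(7)\Rightarrow(2)$, which relies on the $S_{fin}$ analogue of the cited equality $S_1(\Omega,\mathcal O)=S_1(\{\mathcal O_n\}_{n},\mathcal O)$.
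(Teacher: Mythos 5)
Your proposal is correct and follows essentially the same route as the paper: the paper proves this theorem simply by declaring it "analogous to the proof of Theorem \ref{th143}", and your argument is exactly that adaptation, running the same implication chains $(1)\Rightarrow(2)\Rightarrow(3)\Rightarrow(4)\Rightarrow(1)$, $(1)\Rightarrow(5)\Rightarrow(6)\Rightarrow(8)\Rightarrow(2)$, $(3)\Rightarrow(7)\Rightarrow(2)$ with the same witnessing sets $A_n$, $\mathcal S_n$, the $g^{-1}(-1/i,1/i)$ covers, and the $\omega\times\omega$ reindexing, with finite selections in place of single ones. Your explicit handling of the reindexing in $(5)\Rightarrow(6)$ and of the $n$-cover reduction $S_{fin}(\{\mathcal O_n\}_n,\mathcal O)=S_{fin}(\mathcal O,\mathcal O)$ is in fact slightly more careful than the paper's sketch.
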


\begin{proof}

The proof is analogous to proof of Theorem \ref{th143}.

\end{proof}

\section{$S_{1}(\mathcal{S},\mathcal{A})$}

\begin{proposition}\label{th222} For a   space $X$, the following statements are
equivalent:

\begin{enumerate}

\item $C_p(X)$ $\models$ $S_{1}(\Gamma_x,\mathcal{B}_f)$;

 \item
$X$ $\models$ $S_{1}(\Gamma_F, \mathcal{O})$.
\end{enumerate}

\end{proposition}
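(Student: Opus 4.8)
The plan is to mirror the proof of Proposition~\ref{pr23}, which treats the analogous pair $S_1(\Gamma_x,\Omega_x)$ / $S_1(\Gamma_F,\Omega)$, and to adapt it to the weaker families $\mathcal{B}_f$ and $\mathcal{O}$. Since $C_p(X)$ is homogeneous, I would assume the distinguished point is ${\bf 0}$, so that all $\Gamma$-sequences converge to ${\bf 0}$ and $\mathcal{B}_f=\mathcal{B}_{\bf 0}$. The argument rests on the usual dictionary, in two halves. First, if $\mathcal{U}=\{U^m:m\in\omega\}$ is a $\gamma_F$-shrinkable $\gamma$-cover of cozero-sets with $X\notin\mathcal{U}$ and zero-set shrinking $\{F(U^m):m\in\omega\}$, then for each $m$ one can pick a continuous $f^m:X\to[0,1]$ with $f^m\upharpoonright F(U^m)=0$ and $f^m\upharpoonright(X\setminus U^m)=1$ (the two sets are disjoint zero-sets, so such an $f^m$ exists); every $x\in X$ lies in all but finitely many $F(U^m)$, so $f^m(x)=0$ for all but finitely many $m$, and $\{f^m:m\in\omega\}\in\Gamma_{\bf 0}$. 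Second, if $\{g_m:m\in\omega\}\in\Gamma_{\bf 0}$ and $\delta>0$, then $\{g_m^{-1}(-\delta,\delta):m\in\omega\}$ is a $\gamma$-cover of cozero-sets, $\gamma_F$-shrinkable via the zero-sets $\{g_m^{-1}[-\delta/2,\delta/2]:m\in\omega\}$, provided none of those cozero-sets is $X$. The degenerate case in which some such cozero-set equals $X$ is dealt with as in Proposition~\ref{pr23}: one extracts a subsequence converging uniformly to ${\bf 0}$ (which is automatically $1$-dense at ${\bf 0}$) and completes the selection arbitrarily on the other indices.

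For $(1)\Rightarrow(2)$, given a sequence $\{\mathcal{V}_i\}_{i\in\omega}\subset\Gamma_F$, I would first replace each $\mathcal{V}_i$ by a countably infinite subfamily $\mathcal{U}_i=\{U^m_i:m\in\omega\}\subseteq\mathcal{V}_i$ (any countably infinite subfamily of a $\gamma$-cover is again a $\gamma$-cover, and $\gamma_F$-shrinkability is inherited), then form $A_i=\{f^i_m:m\in\omega\}\in\Gamma_{\bf 0}$ by the first half of the dictionary. Applying $C_p(X)\models S_1(\Gamma_x,\mathcal{B}_f)$ to $\{A_i\}_{i\in\omega}$ produces $f^i_{m(i)}\in A_i$ with $\{f^i_{m(i)}:i\in\omega\}\in\mathcal{B}_{\bf 0}$; set $V_i:=U^{m(i)}_i\in\mathcal{V}_i$. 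Then $\{V_i:i\in\omega\}$ covers $X$: for $x\in X$, $1$-density at ${\bf 0}$ at the single point $x$ with tolerance $1$ gives some $i$ with $|f^i_{m(i)}(x)|<1$, and since $f^i_{m(i)}\equiv 1$ on $X\setminus V_i$ this forces $x\in V_i$. Hence $X\models S_1(\Gamma_F,\mathcal{O})$.

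For $(2)\Rightarrow(1)$, given $\{A_n\}_{n\in\omega}\subset\Gamma_{\bf 0}$, I would renumber it as $\{A_{i,j}\}_{i,j\in\omega}$ and attach the scale $\delta_i=2^{-i}$ to the $i$-th row. By the second half of the dictionary, $\mathcal{U}_{i,j}:=\{g^{-1}(-\delta_i,\delta_i):g\in A_{i,j}\}\in\Gamma_F$ for all $i,j$ (after the degenerate-case reduction). For each fixed $i$, apply $X\models S_1(\Gamma_F,\mathcal{O})$ to the sequence $(\mathcal{U}_{i,j})_{j\in\omega}$ to obtain $g_{i,j}\in A_{i,j}$ ($j\in\omega$) with $\{g_{i,j}^{-1}(-\delta_i,\delta_i):j\in\omega\}$ an open cover of $X$; carrying this out for every $i$ (countably many invocations of the hypothesis) defines $g_{i,j}\in A_{i,j}$ for all $i,j$, i.e. one function chosen from each $A_n$. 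The set $\{g_{i,j}:i,j\in\omega\}$ is then $1$-dense at ${\bf 0}$: for $x\in X$ and $\epsilon>0$, pick $i$ with $\delta_i<\epsilon$; the $i$-th cover yields $j$ with $x\in g_{i,j}^{-1}(-\delta_i,\delta_i)$, so $|g_{i,j}(x)|<\delta_i<\epsilon$. Thus $C_p(X)\models S_1(\Gamma_x,\mathcal{B}_f)$.

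The only step that is not purely mechanical is the choice made in $(2)\Rightarrow(1)$. In Proposition~\ref{pr23} the target family $\Omega$ allows one to pad a finite set with ``witness'' points lying outside the already-selected members of the cover, which forces the index in use to be large and so controls the tolerance; with the target $\mathcal{O}$ of all open covers that device is unavailable. I would bypass it by exploiting that membership in $\mathcal{B}_{\bf 0}$ constrains only single points, and by stratifying the renumbered sequence $\{A_{i,j}\}$ by scale so that $S_1(\Gamma_F,\mathcal{O})$ is invoked once per row --- exactly the mechanism used in the proof of Theorem~\ref{th143} for the pair $\mathcal{A}$ / $\mathcal{O}$. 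Everything else (the dictionary, the passage to countable subfamilies, the degenerate cases) is routine and parallels Proposition~\ref{pr23}.
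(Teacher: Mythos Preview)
Your proposal is correct and follows essentially the same route as the paper's proof: both directions use the standard dictionary between $\Gamma_F$-covers and $\Gamma_{\bf 0}$-sequences, and in $(2)\Rightarrow(1)$ both you and the paper renumber the sequences as a double array, stratify by a shrinking scale, and invoke $S_1(\Gamma_F,\mathcal{O})$ once per row. The only cosmetic differences are that the paper uses radii $\tfrac{1}{i+j}$ (depending on both indices) and applies the selection over $i$ for each fixed $j$, whereas you use row-constant radii $\delta_i=2^{-i}$ and select over $j$ for each fixed $i$; your indexing is marginally cleaner but the mechanism is identical.
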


\begin{proof} $(1)\Rightarrow(2)$. Let $\{\mathcal{U}_i\}\subset
\Gamma_F$. For each $i\in \omega$ we consider the
 set $\mathcal{S}_i=\{ f\in C(X) : f\upharpoonright
F(U)=0$ and $f\upharpoonright (X\setminus U)=1$ for $U\in
\mathcal{U}_i \}$.

Since $\mathcal{F}_i=\{F(U): U\in \mathcal{U}_i\}$ is a
$\gamma$-cover of $X$, we have that $\mathcal{S}_i$ converge to
${\bf 0}$, i.e. $\mathcal{S}_i\in \Gamma_0$ for each $i\in
\omega$.

Since  $C_p(X)$ $\models$ $S_{1}(\Gamma_x,\mathcal{B}_f)$, there
is a sequence $\{f_{i}\}_{i\in\omega}$ such that for each $i$,
$f_{i}\in \mathcal{S}_i$, and $\{f_{i} : i\in\omega\}\in
\mathcal{B}_{\bf 0}$.

Consider $\mathcal{V}=\{U_i : U_i\in \mathcal{U}_i$ such that
$f_i\upharpoonright F(U_i)=0$ and $f_i\upharpoonright (X\setminus
U_i)=1\} $. Let $x\in X$ and $W=[x,(-1,1)]$ be a neighborhood of
$\bf{0}$, then there exists $i_0\in \omega$ such that $f_{i_0}\in
W$ .

 It follows that
$x\in U_{i_0}$ and $\mathcal{V}\in \mathcal{O}$. We thus get $X$
$\models$ $S_{1}(\Gamma_F, \mathcal{O})$.

$(2)\Rightarrow(1)$.  Fix $\{S_n : n\in \omega\}\subset \Gamma_0$.
We renumber $\{S_n : n\in \omega\}$ as $\{S_{i,j}: i,j\in
\omega\}$.

For each $i,j\in \omega$ and $f\in S_{i,j}$, we put
$U_{i,j,f}=\{x\in X : |f(x)|<\frac{1}{i+j}\}$, $Z_{i,j,f}=\{x\in X
: |f(x)|\leq\frac{1}{i+j+1}\}$.

Each $U_{i,j,f}$ (resp., $Z_{i,j,f}$) is a cozero-set (resp.,
zero-set) in $X$ with $Z_{i,j,f}\subset U_{i,j,f}$. Let
$\mathcal{U}_{i,j}=\{ U_{i,j,f} : f\in S_{i,j}\}$ and
$\mathcal{Z}_{i,j}=\{ Z_{i,j,f} : f\in S_{i,j}\}$. So without loss
of generality, we may assume $U_{i,j,f}\neq X$ for each $i,j\in
\omega$ and $f\in S_{i,j}$.

We can easily check that the condition $S_{i,j}\in \Gamma_0$
implies that $\mathcal{Z}_{i,j}$ is a $\gamma$-cover of $X$.

Since  $X$ $\models$ $S_{1}(\Gamma_F, \mathcal{O})$ for each $j\in
\omega$ there is a sequence $\{U_{i,j,f_{i,j}} : i\in \omega\}$
such that for each $i$, $U_{i,j,f_{i,j}}\in \mathcal{U}_{i,j}$,
and $\{U_{i,j,f_{i,j}} : i\in \omega\}\in \mathcal{O}$. Claim that
$\{f_{i,j}:i,j\in \omega\}\in \mathcal{B}_0$. Let $x\in X$,
$\epsilon>0$, and $W=[x,(-\epsilon, \epsilon)]$ be a base
neighborhood of $\bf{0}$, then there exists $j'\in \omega$ such
that $\frac{1}{1+j'}<\epsilon$. It follow that there exists $i'$
such that $f_{i',j'}(x)\in (-\epsilon, \epsilon)$. So $C_p(X)$
$\models$ $S_{1}(\Gamma_x,\mathcal{B}_f)$.

\end{proof}

\begin{theorem}\label{th173} For a   space $X$, the following statements are
equivalent:

\begin{enumerate}

\item  $C_p(X)$ $\models$ $S_{1}(\mathcal{S},\mathcal{A})$ and is
sequentially separable;

\item $X$ $\models$ $S_{1}(\Gamma_F, \mathcal{O})$, $X$ $\models$
$V$;

\item $C_p(X)$ $\models$ $S_{1}(\Gamma_x,\mathcal{B}_f)$ and is
sequentially separable;

\item  $C_p(X)$ $\models$ $S_{1}(\mathcal{S},\mathcal{B}_f)$ and
is sequentially separable.

\end{enumerate}

\end{theorem}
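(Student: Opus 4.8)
The plan is to prove the cycle $(1)\Rightarrow(2)\Rightarrow(3)\Rightarrow(4)\Rightarrow(1)$, reusing the function-space construction from the proofs of Theorems \ref{th50} and \ref{th158}, together with Proposition \ref{th222} and Velichko's Theorem \ref{th38}. Only the first implication requires real work; the rest is bookkeeping of the same kind already carried out for $S_1(\mathcal{S},\mathcal{D})$ and $S_1(\mathcal{S},\mathcal{S})$.

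For $(1)\Rightarrow(2)$: since $C_p(X)$ is sequentially separable, Theorem \ref{th38} immediately gives $X\models V$, so it remains to derive $X\models S_1(\Gamma_F,\mathcal{O})$. Fix a countable sequentially dense set $\mathcal{S}=\{h_m:m\in\omega\}$ in $C_p(X)$ and a sequence $\{\mathcal{V}_i\}_{i\in\omega}\subseteq\Gamma_F$. Exactly as in the proof of Theorem \ref{th158}, for each $i$ choose a countable $\mathcal{U}_i=\{U^m_i:m\in\omega\}\subseteq\mathcal{V}_i$ that is again $\gamma_F$-shrinkable, and for each $m$ pick $f^m_i\in C(X)$ with $f^m_i\upharpoonright F(U^m_i)=h_m$ and $f^m_i\upharpoonright(X\setminus U^m_i)=1$. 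Then $\mathcal{S}_i=\{f^m_i:m\in\omega\}$ is a countable sequentially dense subset of $C_p(X)$: given $h\in C(X)$ and a sequence $\{h_{m_s}\}_s\subseteq\mathcal{S}$ converging to $h$, any basic neighborhood $\langle h,K,\epsilon\rangle$ of $h$ contains $f^{m_s}_i$ for all large $s$, because $\{F(U^m_i)\}_m$ is a $\gamma$-cover of $X$ (so $K\subseteq F(U^{m_s}_i)$ eventually) and there $f^{m_s}_i\upharpoonright K=h_{m_s}\upharpoonright K$. Applying $S_1(\mathcal{S},\mathcal{A})$ to $\{\mathcal{S}_i\}_i$ gives $f^{m(i)}_i\in\mathcal{S}_i$ with $\{f^{m(i)}_i:i\in\omega\}\in\mathcal{A}$, and then $\{U^{m(i)}_i:i\in\omega\}$ is the required cover of $X$: for $x\in X$, since $\{f^{m(i)}_i:i\in\omega\}$ is $1$-dense there is an $i$ with $f^{m(i)}_i(x)\in(-1,1)$, and since $f^{m(i)}_i\equiv 1$ off $U^{m(i)}_i$ this forces $x\in U^{m(i)}_i$. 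As usual one may assume $U^{m(i)}_i\neq X$ (otherwise a uniformly convergent sequence from some $\mathcal{S}_i$ already lies in $\mathcal{A}$). Since each $U^{m(i)}_i\in\mathcal{V}_i$, we conclude $X\models S_1(\Gamma_F,\mathcal{O})$.

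The implication $(2)\Rightarrow(3)$ is Proposition \ref{th222} together with Theorem \ref{th38}. For $(3)\Rightarrow(4)$: given $f\in C(X)$ and a sequence $\{D_n\}_n$ of sequentially dense subsets of $C_p(X)$, each $D_n$ contains, by sequential density, a sequence $A_n\in\Gamma_f$ converging to $f$; applying $S_1(\Gamma_x,\mathcal{B}_f)$ at the point $f$ to $\{A_n\}_n$ already yields $d_n\in A_n\subseteq D_n$ with $\{d_n:n\in\omega\}\in\mathcal{B}_f$, and sequential separability is carried over from the hypothesis. For $(4)\Rightarrow(1)$: given a sequence of sequentially dense subsets of $C_p(X)$, renumber it as $\{D_{i,j}\}_{i,j\in\omega}$ and enumerate $\mathbb{Q}=\{q_i:i\in\omega\}$; for each fixed $i$ apply $S_1(\mathcal{S},\mathcal{B}_f)$ at the constant function with value $q_i$ to the sequence $\{D_{i,j}\}_j$, obtaining $f_{i,j}\in D_{i,j}$ such that $\{f_{i,j}:j\in\omega\}$ is $1$-dense at $q_i$. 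Then $\{f_{i,j}:i,j\in\omega\}$ is $1$-dense in $C_p(X)$: for $x\in X$ and a nonempty open $W\subseteq\mathbb{R}$, pick $q_i\in W$ and $\epsilon>0$ with $(q_i-\epsilon,q_i+\epsilon)\subseteq W$, and find $j$ with $f_{i,j}(x)\in(q_i-\epsilon,q_i+\epsilon)\subseteq W$; hence $\{f_{i,j}:i,j\in\omega\}\in\mathcal{A}$ and $C_p(X)\models S_1(\mathcal{S},\mathcal{A})$, again keeping the sequential separability assumed in (4).

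The step I expect to be the main obstacle is $(1)\Rightarrow(2)$, and within it the double role of the auxiliary functions $f^m_i$: the prescription ``$=h_m$ on $F(U^m_i)$, $=1$ off $U^m_i$'' must be flexible enough to make each $\mathcal{S}_i$ sequentially dense --- which is exactly where the $\gamma_F$-shrinkability of the $\mathcal{U}_i$ is used, via $\{F(U^m_i)\}_m$ being a $\gamma$-cover --- and simultaneously rigid enough that $1$-density of the selected set at a single point $x$ pins $x$ inside the chosen cozero set $U^{m(i)}_i$. The secondary nuisances, namely passing from $\mathcal{V}_i\in\Gamma_F$ to a countable $\mathcal{U}_i\in\Gamma_F$ and discarding the degenerate case $U^{m(i)}_i=X$, are dealt with verbatim as in the proofs of Theorems \ref{th50} and \ref{th158}.
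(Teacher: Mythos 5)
Your proposal is correct and follows essentially the same route as the paper: the same auxiliary sets $\mathcal{S}_i$ built from $h_m$, $F(U^m_i)$ and $U^m_i$ for $(1)\Rightarrow(2)$ (with $X\models V$ from Theorem \ref{th38}), Proposition \ref{th222} for $(2)\Rightarrow(3)$, and the same enumeration of $\mathbb{Q}$ by constant functions for $(4)\Rightarrow(1)$. The only cosmetic difference is that the degenerate case $U^{m(i)}_i=X$ you mention cannot actually occur here, since the $\mathcal{V}_i$ are nontrivial covers, so that remark is unnecessary but harmless.
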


\begin{proof} $(1)\Rightarrow(2)$. Let $\{\mathcal{V}_i: i\in \omega\}\subset \Gamma_F$ and
$\mathcal{S}=\{h_m: m\in \omega\}$ be a countable sequentially
dense subset of $C_p(X)$. For each $i\in \omega$ we consider a
countable sequentially dense subset $\mathcal{S}_i$ of $C_p(X)$
and $\mathcal{U}_i=\{ U^{m}_i: m\in \omega\}\subset \mathcal{V}_i$
where

$\mathcal{S}_i=\{ f^m_i\in C(X) : f^m_i\upharpoonright
F(U^{m}_i)=h_m$ and $f^m_i\upharpoonright (X\setminus U^{m}_i)=1$
for $m \in \omega \}$.

Since $\mathcal{F}_i=\{F(U^{m}_i): m\in \omega\}$ is a
$\gamma$-cover of zero subsets of $X$ and $\mathcal{S}$ is a
countable sequentially dense subset of $C_p(X)$, we have that
$\mathcal{S}_i$ is a  countable sequentially dense subset of
$C_p(X)$ for each $i\in \omega$.  Let $h\in C(X)$, there is a
sequence $\{h_{m_s}: s\in \omega\}\subset \mathcal{S}$ such that
$\{h_{m_s}\}_{s\in \omega}$ converge to $h$.
 Let $K$
be a finite subset of $X$, $\epsilon>0$ and $W=<h, K,\epsilon>$ be
a base neighborhood of $h$, then there is a number $m_0$ such that
$K\subset F(U^{m}_i)$ for $m>m_0$ and $h_{m_s}\in W$ for
$m_s>m_0$. Since $f^{m_s}_i\upharpoonright K=
h_{m_s}\upharpoonright K$ for each $m_s>m_0$, $f^{m_s}_i\in W$ for
each $m_s>m_0$. It follows that a sequence $\{f^{m_s}_i\}_{s\in
\omega}$ converge to $h$.

By $C_p(X)\in S_{1}(\mathcal{S},\mathcal{A})$, there is a set
$\{f^{m(i)}_{i}: i\in\omega\}$ such that for each $i$,
$f^{m(i)}_{i}\in \mathcal{S}_i$, and $\{f^{m(i)}_{i}: i\in\omega
\}$ is an element of $\mathcal{A}$.

Consider a set $\{U^{m(i)}_{i}: i\in \omega\}$.

(a). $U^{m(i)}_{i}\in \mathcal{U}_{i}$.

(b). $\{U^{m(i)}_{i}: i\in \omega\}$ is a cover of $X$.

Let $x\in X$ and $U=<$ $\bf{0}$ $, x, \frac{1}{2}>$ be a base
neighborhood of $\bf{0}$, then there is
$f^{m(i)_{j_0}}_{i_{j_0}}\in U$ for some $j_0\in \omega$. It
follows that $x\in  U^{m(i)_{j_0}}_{i_{j_0}}$. We thus get $X$
$\models$ $S_{1}(\Gamma_F, \mathcal{O})$.

$(2)\Leftrightarrow(3)$. By Proposition \ref{th222}.

$(3)\Rightarrow(4)$ is immediate.

$(4)\Rightarrow(1)$. Let $S_n\in \mathcal{S}$ for each $n\in
\omega$. We renumber $\{S_n\}_{n\in \omega}$ as
$\{S_{i,j}\}_{i,j\in \omega}$. Renumber the rational numbers
$\mathbb{Q}$ as $\{q_i : i\in \omega\}$.  Fix $i\in\omega$. By the
assumption there exist $f_{i,j}\in S_{i,j}$ such that $\{f_{i,j} :
j\in \omega\}\in \mathcal{B}_{q_i}$ where $q_i$  is the constant
function to $q_i$. Then $\{f_{i,j} : i,j\in \omega\}\in
\mathcal{A}$.

\end{proof}

\begin{theorem}\label{th173} For a separable metrizable space $X$, the following statements are
equivalent:

\begin{enumerate}

\item  $C_p(X)$ $\models$ $S_{1}(\mathcal{S},\mathcal{A})$;

\item $X$ $\models$ $S_{1}(\Gamma_F, \mathcal{O})$;

\item $C_p(X)$ $\models$ $S_{1}(\Gamma_x,\mathcal{B}_f)$;

\item  $C_p(X)$ $\models$ $S_{1}(\mathcal{S},\mathcal{B}_f)$.

\end{enumerate}

\end{theorem}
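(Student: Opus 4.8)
The plan is to obtain this statement as a direct specialization of the preceding Theorem~\ref{th173} (the version for arbitrary spaces), by showing that for a separable metrizable $X$ the two side conditions appearing there --- namely ``$X\models V$'' and ``$C_p(X)$ is sequentially separable'' --- are automatically satisfied, so that the four-item list collapses to the one stated here.

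First I would check that every separable metrizable space $X$ has property $V$. Indeed, the identity map $\mathrm{id}\colon X\to X$ is a condensation onto a separable metric space, and in a metric space every open set is $F_\sigma$; since each cozero-set of $X$ is in particular open, $\mathrm{id}(U)=U$ is $F_\sigma$ for every cozero-set $U$. Hence $X\models V$. By Theorem~\ref{th38} (Velichko), $X\models V$ is equivalent to $C_p(X)$ being sequentially separable, so $C_p(X)$ is sequentially separable; in particular it is separable (which also follows from $iw(X)=\aleph_0$ together with Theorem~\ref{th31}).

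With these observations in hand, each of the items (1)--(4) of Theorem~\ref{th173} for general $X$ has its extra conjunct (``and is sequentially separable'' in items (1), (3), (4), and ``$X\models V$'' in item (2)) fulfilled automatically once $X$ is separable metrizable. Therefore those four statements reduce verbatim to the four statements of the present corollary, and the equivalences (1)$\Leftrightarrow$(2)$\Leftrightarrow$(3)$\Leftrightarrow$(4) are inherited from Theorem~\ref{th173}. One may also simply cite Proposition~\ref{th222} for the equivalence (2)$\Leftrightarrow$(3) and note that the remaining implications are the same chain ((1)$\Rightarrow$(2), (3)$\Rightarrow$(4), (4)$\Rightarrow$(1)) already established there.

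I do not anticipate a genuine obstacle here: the only point requiring a moment's care is the verification that ``separable metrizable'' yields property $V$ outright (via the identity condensation and the $F_\sigma$-character of open sets in metric spaces), which then lets Velichko's theorem supply sequential separability of $C_p(X)$ for free. Everything else is bookkeeping on top of Theorem~\ref{th173}.
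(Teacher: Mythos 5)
Your proposal is correct and follows exactly the route the paper intends: the result is stated as an immediate specialization of the preceding theorem, and your verification that a separable metrizable $X$ satisfies $V$ (identity condensation, open sets being $F_\sigma$ in metric spaces) together with Theorem~\ref{th38} giving sequential separability of $C_p(X)$ is precisely the bookkeeping that makes the side conditions vacuous. No issues.
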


\section{Critical cardinalities}

For a collection $\mathcal{J}$ of spaces $C_p(X)$, let
$nonC_p(\mathcal{J})$ denote the minimal cardinality for $X$ which
$C_p(X)$ is not a member of $\mathcal{J}$.

 The critical cardinalities in the Scheepers Diagram \cite{tss1} are equal to the critical cardinalities of selectors for
sequences of countable dense and countable sequentially subsets of
$C_p(X)$.

\begin{theorem} For a collection $C_p(X)$ of all
real-valued continuous functions, defined on  Tychonoff spaces $X$
with $iw(X)=\aleph_0$,

(1) $nonC_p(S_{1}(\mathcal{D},\mathcal{S}))=\mathfrak{p}$.

(2)
$nonC_p(S_{1}(\mathcal{S},\mathcal{S}))=nonC_p(U_{fin}(\mathcal{S},\mathcal{S}))=\mathfrak{b}$.

(3)
$nonC_p(S_{fin}(\mathcal{D},\mathcal{D}))=nonC_p(S_{1}(\mathcal{S},\mathcal{D}))=nonC_p(S_{1}(\mathcal{S},\mathcal{A}))=\mathfrak{d}$.

$nonC_p(U_{fin}(\mathcal{S},\mathcal{D}))=nonC_p(U_{fin}(\mathcal{S},\mathcal{A}))=
nonC_p(S_{fin}(\mathcal{S},\mathcal{D}))=\mathfrak{d}$.

(4) $nonC_p(S_{1}(\mathcal{D},\mathcal{D})) =
nonC_p(S_{1}(\mathcal{A},\mathcal{A}))=cov(\mathcal{M})$.

\end{theorem}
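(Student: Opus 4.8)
The plan is to read off each equality mechanically from the characterization theorems of the preceding sections and then to quote the values of the critical cardinalities in the Scheepers diagram recorded in \cite{tss1}. For a selection property $\mathcal{P}$ of $C_p(X)$, those theorems identify, for every $X$ with $iw(X)=\aleph_0$, the statement $C_p(X)\models\mathcal{P}$ with a covering property $\mathcal{Q}$ of $X$, in some cases conjoined with $X\models V$ and/or strong zero-dimensionality of $X$; hence $nonC_p(\mathcal{P})$ is the least cardinality of such an $X$ that fails $\mathcal{Q}$. For the upper bound one takes a set of real numbers of the critical cardinality of $\mathcal{Q}$ that fails $\mathcal{Q}$: being separable metrizable it has $iw=\aleph_0$ and satisfies $V$, and if its cardinality is $<\mathfrak{c}$ it is zero-dimensional, so the side conditions cause no loss. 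For the lower bound one uses the separable metrizable corollaries proved above together with the classical fact that, below the relevant cardinal, every set of reals satisfies $\mathcal{Q}$.

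Items (1) and (4) are then immediate. The characterization of $S_1(\mathcal{D},\mathcal{S})$ reads $C_p(X)\models S_1(\mathcal{D},\mathcal{S})$ iff $X\models S_1(\Omega,\Gamma)$ and $iw(X)=\aleph_0$, and $\mathfrak{p}$ is exactly the least cardinality of a set of reals which is not a $\gamma$-set (Gerlits--Nagy, see \cite{gn}); this gives (1). Likewise $C_p(X)\models S_1(\mathcal{D},\mathcal{D})$ iff $X\models S_1(\Omega,\Omega)$ and $iw(X)=\aleph_0$, while $C_p(X)\models S_1(\mathcal{A},\mathcal{A})$ iff $X\models S_1(\mathcal{O},\mathcal{O})$, and $non(S_1(\Omega,\Omega))=non(S_1(\mathcal{O},\mathcal{O}))=cov(\mathcal{M})$ by \cite{tss1}, which is (4). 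For the first entry of (3), $C_p(X)\models S_{fin}(\mathcal{D},\mathcal{D})$ iff $X\models S_{fin}(\Omega,\Omega)$ and $iw(X)=\aleph_0$, and $non(S_{fin}(\Omega,\Omega))=\mathfrak{d}$ by \cite{tss1}.

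For the remaining entries of (3), the theorems of the sections on $S_1(\mathcal{S},\mathcal{D})$, $S_{fin}(\mathcal{S},\mathcal{D})$, $U_{fin}(\mathcal{S},\mathcal{D})$ and $S_1(\mathcal{S},\mathcal{A})$, together with the evident analogue for $U_{fin}(\mathcal{S},\mathcal{A})$, turn the property of $C_p(X)$ (for $X$ with $iw(X)=\aleph_0$) into one of $X\models S_1(\Gamma_F,\Omega)\wedge V$, $X\models S_{fin}(\Gamma_F,\Omega)\wedge V$, $X\models U_{fin}(\Gamma_F,\Omega)\wedge V$, $X\models S_1(\Gamma_F,\mathcal{O})\wedge V$ and $X\models U_{fin}(\Gamma_F,\mathcal{O})\wedge V$ respectively. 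On a zero-dimensional separable metrizable space, hence on every set of reals of cardinality $<\mathfrak{c}$, the zero-set and clopen shrinkings occurring in the definition of $\Gamma_F$ may be taken to be arbitrary closed (resp. clopen) sets, so that there each $\Gamma_F$-property agrees with the corresponding $\Gamma$-property, and all of $S_1(\Gamma,\Omega)$, $S_{fin}(\Gamma,\Omega)$, $U_{fin}(\Gamma,\Omega)$, $S_1(\Gamma,\mathcal{O})$, $U_{fin}(\Gamma,\mathcal{O})$ have critical cardinality $\mathfrak{d}$ by \cite{tss1}; this settles (3). For (2), the theorem on $S_1(\mathcal{S},\mathcal{S})$ gives $C_p(X)\models S_1(\mathcal{S},\mathcal{S})$ iff $X\models S_1(\Gamma_F,\Gamma)$, $X\models V$ and $X$ is strongly zero-dimensional, and the theorem on $U_{fin}(\mathcal{S},\mathcal{S})$ the analogous equivalence with $U_{fin}(\Gamma_F,\Gamma)$; since both $S_1(\Gamma,\Gamma)$ and the Hurewicz property $U_{fin}(\Gamma,\Gamma)$ have critical cardinality $\mathfrak{b}$, witnessed by an unbounded subset of $\omega^{\omega}$ of size $\mathfrak{b}$ (a zero-dimensional separable metric space failing even $S_1(\Gamma_{cl},\Gamma_{cl})$), we get $nonC_p(S_1(\mathcal{S},\mathcal{S}))=nonC_p(U_{fin}(\mathcal{S},\mathcal{S}))=\mathfrak{b}$.

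The hard part is the bookkeeping forced by the auxiliary hypotheses. One has to verify that in each minimum the extremal $X$ may be taken to be a set of real numbers --- so that the separable metrizable corollaries and the tabulated critical cardinalities of \cite{tss1} apply --- and that below the pertinent cardinal the relevant covering property holds for \emph{all} admissible $X$; and one has to check that replacing the shrinkable covers $\Gamma_F$ (together with the conditions $X\models V$ and strong zero-dimensionality) by the plain $\gamma$-covers $\Gamma$ is faithful on zero-dimensional separable metrizable spaces, so that no shift of the critical value is introduced. Granting these reductions, the numerical answers are precisely those listed for the Scheepers diagram in \cite{tss1}.
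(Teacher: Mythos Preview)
The paper supplies no proof of this theorem; it is stated immediately after the one-line remark ``The critical cardinalities in the Scheepers Diagram \cite{tss1} are equal to the critical cardinalities of selectors for sequences of countable dense and countable sequentially subsets of $C_p(X)$'' and is followed directly by the summary diagrams. Your strategy --- translate each $C_p$-property into a covering property of $X$ via the characterization theorems of the preceding sections, then read off the value from the Scheepers-diagram table in \cite{tss1} --- is exactly the argument the paper has in mind, and for separable metrizable $X$ your sketch is correct and complete at the level of detail the paper expects.

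There is, however, a genuine gap in the lower-bound direction that you flag as ``bookkeeping'' but do not actually discharge, and which is not merely cosmetic. The critical cardinalities in \cite{tss1} are computed for sets of reals; to obtain $nonC_p(\mathcal{P})\ge\kappa$ over the stated class you must show that \emph{every} Tychonoff $X$ with $iw(X)=\aleph_0$ and $|X|<\kappa$ satisfies the corresponding covering property. But $iw(X)=\aleph_0$ is much weaker than separable metrizability: an uncountable discrete space $D$ with $|D|\le\mathfrak{c}$ has $iw(D)=\aleph_0$ (any injection into $\mathbb{R}$ is a condensation), yet $D$ is not even Lindel\"of, hence fails $S_1(\mathcal{O},\mathcal{O})$, $S_{fin}(\mathcal{O},\mathcal{O})$, $S_1(\Omega,\Gamma)$, etc. Thus $C_p(D)$ fails $S_1(\mathcal{A},\mathcal{A})$, $S_{fin}(\mathcal{D},\mathcal{D})$, $S_1(\mathcal{D},\mathcal{S})$, \dots, forcing $nonC_p\le\aleph_1$ for these classes and contradicting (1), (3), (4) in any model where $\mathfrak{p}$, $\mathfrak{d}$ or $cov(\mathcal{M})$ exceeds $\aleph_1$. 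Your sentence ``below the pertinent cardinal the relevant covering property holds for all admissible $X$'' is precisely what is required and precisely what fails for general $X$ with $iw(X)=\aleph_0$. The theorem should be read as a statement about separable metrizable $X$ (where your corollaries apply, $\Gamma_F$ reduces to $\Gamma$, the side conditions $V$ and strong zero-dimensionality are automatic or harmless, and the values from \cite{tss1} transfer verbatim); under that reading your proof is essentially the paper's.
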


\newpage

We can summarize the relationships between considered notions in
next diagrams.

\begin{center}
\ingrw{90}{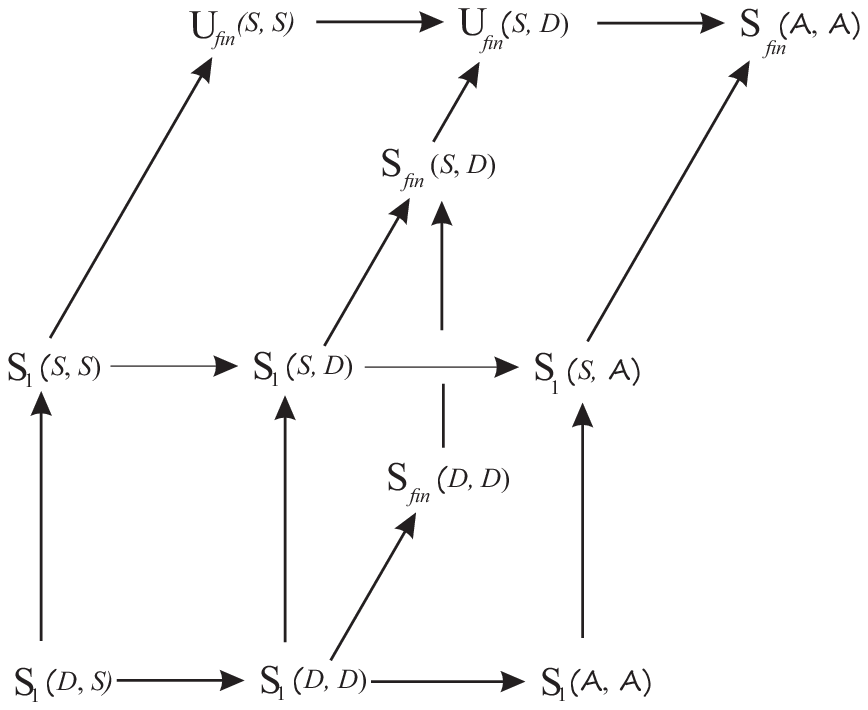}

\medskip

Fig.~2. The Diagram of selectors for sequences of dense sets of
$C_p(X)$.

\end{center}

\bigskip

\begin{center}
\ingrw{90}{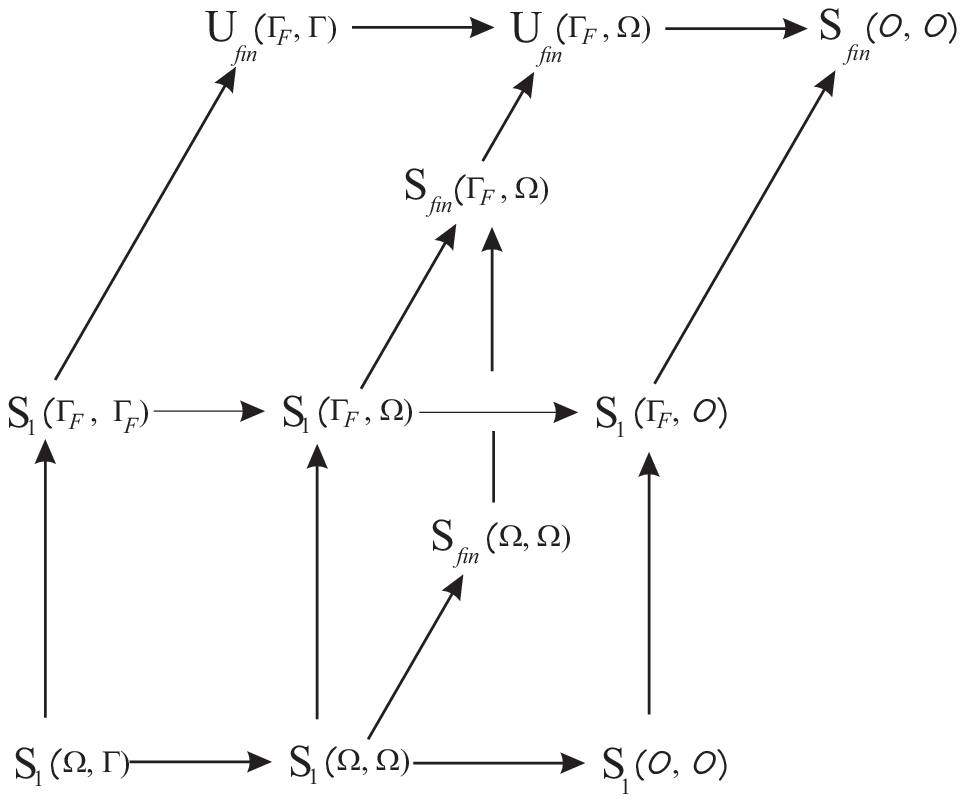}

\medskip

Fig.~3. The Diagram of selection principles for metrizable
separable space $X$ corresponding to selectors for sequences of
dense sets of $C_p(X)$.

\end{center}

\bigskip

{\bf Acknowledgment.} The author express gratitude to Boaz Tsaban
for useful discussions.

\bigskip

\bibliographystyle{model1a-num-names}
\bibliography{<your-bib-database>}







\end{document}